\documentclass[Threeside,11pt]{article} 
	\usepackage{etoolbox}
	\usepackage{setspace}
	\usepackage{amsmath,amsthm,amsfonts,amssymb,amscd,mathrsfs}
	\usepackage{tikz}
	\usepackage{tikz-cd}
	\usepackage{float}
	\usepackage{subcaption}
	\usepackage[latin1]{inputenc}
	\usepackage[english]{babel}
	\usepackage{algorithm}
	\usepackage{adjustbox}
	\usepackage{cite}
	\usepackage{graphicx}
	\usepackage{colortbl,dcolumn}
	\usepackage{marvosym}
	\usepackage{ifsym}
	\usepackage{paralist}
	\usepackage{xcolor}
	\usepackage{array}
	\usepackage{geometry}
	\usepackage{enumitem}

	\definecolor{myPurple}{RGB}{109,90,207}
	\definecolor{myGreen}{RGB}{4,130,67}
	\definecolor{myGold}{RGB}{253,177,71}
	\definecolor{myBurgundy}{RGB}{63,1,44}
	\definecolor{myTeal}{RGB}{8,119,127}
	\definecolor{myCream}{RGB}{255,216,177}
	\definecolor{myOrange}{RGB}{225,119,1}
	\definecolor{mydeepgreen}{RGB}{3,100,50}

	\usetikzlibrary{shapes.geometric,arrows.meta,calc,decorations.markings,bending,arrows}
	\tikzset{
		ball3Donly/.style={
			circle, minimum size=1.4cm,
			shading=ball,
			ball color=myPurple,
			draw=myPurple
		}
	}
	\pgfmathsetmacro{\wNear}{2.0pt}
	\pgfmathsetmacro{\wMid}{1.3pt}
	\pgfmathsetmacro{\wFar}{0.7pt}
	\pgfmathsetmacro{\bend}{6mm}

	\makeatletter
	\patchcmd{\thebibliography}
	{\list}
	{\small\setstretch{1.2}\list}
	{}{}
	\makeatother

	\allowdisplaybreaks
	\newtheorem{lemma}{\bf Lemma}[section]
	\newtheorem{theorem}{\bf Theorem}[section]
	\newtheorem{proposition}{\bf Proposition}[section]
	\newtheorem{corollary}{\bf Corollary}[section]
	\newtheorem{definition}{\bf Definition}[section]
	\newtheorem{remark}{\bf Remark}[section]
	
	\numberwithin{equation}{section}

	\usepackage[
	colorlinks,
	citecolor=blue,
	linkcolor=blue]{hyperref}

\begin{document}
	\title{{\sl Optimal rates of uniform convergence for weighted Birkhoff averages via almost all rotations}}
	\author{Zhicheng Tong\thanks{School of Mathematics, Jilin University, Changchun 130012, P. R.  China. Email: \url{tongzc25@jlu.edu.cn}} 
		\and 
		Yong Li\thanks{Corresponding author. School of Mathematics, Jilin University, Changchun 130012, P. R.  China;   Center for Mathematics and Interdisciplinary Sciences,  Northeast  Normal  University, Changchun 130024, P. R. China. Email: \url{liyong@jlu.edu.cn}}
	}
	
	\date{}
	\maketitle
	
	\begin{abstract}
	In this paper, we investigate weighted Birkhoff averages for toral translations associated with compactly supported weighting functions. By introducing several new analytical techniques, we establish optimal uniform convergence rates for almost all rotations and specific (or even all) initial points. Unlike the $\mathcal{O}(N^{-1})$ rate best achieved in classical ergodic theory, we show that these weighted averages exhibit polynomial or even exponential convergence. We establish the optimality of these convergence rates in multiple aspects, particularly concerning regularity indices across four distinct cases: finite differentiability, the $C^\infty$ class, logarithmic $C^\infty$ classes, and Gevrey classes. Our results demonstrate that the regularity of the observable essentially dictates the convergence rate; furthermore, we prove that in general settings, no alternative weighting function can yield a faster uniform rate. In contrast to the generically slow convergence of standard time averages, this work provides an optimal and nearly complete characterization of rapid convergence for weighted Birkhoff averages.\\
	\\
	{\bf Keywords:} {Weighted Birkhoff averages, toral translations, ultra-differentiable classes, polynomial and exponential uniform convergence, optimal convergence rates}\vspace{2mm}
	\\
	{\bf2020 MSC codes:} {37C55, 
		 37A46, 
		  37A44, 
		  37C05, 
		   37A10 
	}
	\end{abstract}
	
	\tableofcontents
	
	\section{Introduction}
	\setcounter{footnote}{0}
	\renewcommand{\thefootnote}{{\arabic{footnote}}}
	
	The ergodic theorem due to Birkhoff and von Neumann is recognized as a fundamental principle in the sciences, finding extensive applications across various disciplines such as statistical mechanics and celestial mechanics. This celebrated theorem asserts that, under the general ergodic hypothesis, the time average (also termed the Birkhoff average) of an observable possessing suitable regularity tends to its spatial average. More precisely, given a measure-preserving transformation $T$ on a topological space $X$ equipped with a probability measure $\mu$ for which $T$ is ergodic, the Birkhoff average of an observable $f$ defined on $X$ is given by
			$$
			\mathrm{B}_N(f)(x) := \frac{1}{N}\sum_{n = 0}^{N - 1} f(T^n x), \quad x \in X.
			$$
			As the number of iterations $N$ increases, $\mathrm{B}_N(f)(x)$ converges to the spatial average $\int_X f d\mu$ in an appropriate sense; for instance, in the $L^2$-norm when $f \in L^2(X, \mu)$ (von Neumann's ergodic theorem), and almost everywhere when $f \in L^1(X, \mu)$ (Birkhoff's ergodic theorem).
			
		In general settings, however, the convergence of these Birkhoff averages is exceedingly slow. It was initially shown in \cite{Kre78} that one can construct a continuous observable for which the Birkhoff average exhibits \textit{arbitrarily slow convergence}. Similar yet distinct statements were proved in \cite{dR79,Pod22,Ryz23,Ryz25,PR26}, among others. These fundamental observations are commonly referred to as \textit{``the absence of estimates for rates of convergence in the Birkhoff ergodic theorem''}. Therefore, to obtain explicit rates of convergence for Birkhoff averages, one must restrict the study to specific dynamical systems. Moreover, establishing quantitative rates is a highly challenging yet significant problem that remains an active area of research. The aforementioned results illustrate the pathological behavior of the Birkhoff average in special counterexamples, whereas for general observables, the convergence rate is still fundamentally limited. It has been well known since \cite{Kac96} that for non-constant observables, the convergence rate of the Birkhoff average is \textit{at best} $\mathcal{O}(N^{-1})$, i.e., inversely proportional to the number of iterations; see also \cite{KP16,Pod22,Pod24,PR26} on this aspect. Furthermore, it is important to note that even this optimal rate is achieved only under highly restrictive assumptions \cite{KH95,Kat03,DY18,KLM21}.
			
			To make the preceding results more transparent to the reader, let us consider the case of primary interest in ergodic theory: toral translations. Although such systems have been studied extensively due to their profound connections to number theory, integrable systems, and geometry, a surprising number of natural questions remain open, as emphasized in \cite{DF15}. We now focus on the Birkhoff averages associated with toral translations. Given an irrational rotation on the $d$-torus, defined by
			\begin{equation}\label{xuanzhuan}
				\mathscr{T}_\rho^s(\theta) := \theta + s\rho \pmod{\mathbb{Z}^d}, \quad \theta \in \mathbb{T}^d, \quad d \in \mathbb{N}^+, \quad s \in \mathbb{R},
			\end{equation}
			where the rotation vector $\rho \in \mathbb{R}^d$ satisfies a nonresonance condition (such as a Diophantine condition, cf. \cite{Her79}), the convergence rate of the Birkhoff average
			\begin{equation}\label{OPTxz}
				\frac{1}{N}\sum_{n = 0}^{N - 1} f\left( \mathscr{T}_\rho^n(\theta) \right)
			\end{equation}
			for an observable $f$ with a certain regularity, as well as its continuous counterpart, has attracted considerable interest. Counterexamples on $\mathbb{T}^d$ with $d \geqslant 2$ via extremely Liouvillean rotations\footnote{A rotation vector is said to be Liouville if it is not Diophantine. Strongly Liouville rotation vectors are characterized by being extremely well approximated by rational vectors; consequently, such rotation vectors are non-universal in the measure-theoretic sense.} were constructed in \cite{Yoc80,Yoc95}, which yields \textit{arbitrarily slow convergence} for \eqref{OPTxz} with analytic observables, in analogy with the result in \cite{Kre78}. Even leaving Liouvillean rotations aside, universal ones, which constitute a set of full Lebesgue measure, still \textit{cannot} guarantee rapid convergence for analytic observables. More precisely, in the simplest case on $\mathbb{T}$, the cohomological equation
			$$
			f(\theta) - \int_{\mathbb{T}} f(x) dx = \widetilde{f}\left( \mathscr{T}_\rho(\theta) \right) - \widetilde{f}(\theta), \quad \theta \in \mathbb{T}
			$$
			admits a non-constant continuous solution $\widetilde{f} \colon \mathbb{T} \to \mathbb{R}$ for almost all $\rho \in \mathbb{R}$, provided that the observable $f$ is non-constant and analytic on $\mathbb{T}$\footnote{Indeed, $C^{1+\alpha}(\mathbb{T})$ regularity for any $\alpha > 0$ is sufficient (cf. \cite{Kat04}).}. Hence, a straightforward calculation of \eqref{OPTxz} yields
			$$
			\frac{1}{N}\sum_{n = 0}^{N - 1} f\left( \mathscr{T}_\rho^n(\theta) \right) = \int_{\mathbb{T}} f(x) dx + \frac{\widetilde{f}(\theta + N\rho) - \widetilde{f}(\theta)}{N},
			$$
			which implies a convergence rate of $\mathcal{O}(N^{-1})$; in fact, this rate is optimal in this setting due to uniform ergodicity. As one would expect, this rate may deteriorate for observables with lower regularity, such as H\"older continuous functions. In this direction, \cite{KLM21} investigated the upper and lower bounds for the convergence rate of the Birkhoff average \eqref{OPTxz} for almost all irrational rotations, considering observables with $\alpha$-H\"older regularity ($0 < \alpha < 1$). The rate is bounded by $\mathcal{O}(N^{-\alpha}(\log N)^{3\alpha})$ in general, but cannot be faster than $\mathcal{O}(N^{-\alpha})$ for certain counterexamples, rendering it optimal up to a logarithmic factor. Beyond the aforementioned deterministic setting, the random case has also been investigated in \cite{DF14, DF15, DF20}, where slow convergence rates for limit theorems were established.
			
			Beyond the theoretical considerations discussed above, the exceedingly slow convergence of Birkhoff averages in numerical simulations renders practical computations prohibitively difficult, notwithstanding theoretical guarantees of qualitative convergence. In the pursuit of high-precision numerical results, certain computations may require \textit{billions of years} to terminate; we refer to \cite[Section 1.9]{DY18} for further details.
			
			In light of the slow convergence observed from both theoretical and numerical perspectives, accelerating the convergence rate of the Birkhoff average is widely recognized as a challenging problem, motivating the development of several weighted approaches. Specifically, a (non-uniform) weighting scheme for toral translations has been introduced: given a weighting function $w(x)$ satisfying certain special properties, it is sampled uniformly over the time scale and assigned to each term of the standard Birkhoff average \eqref{OPTxz}, yielding the following \textit{weighted Birkhoff average}:
			\begin{equation}\label{lsjq} 
				\frac{1}{A_N}\sum_{n = 0}^{N - 1} w(n/N) f\left( \mathscr{T}_\rho^n(\theta) \right), \quad A_N = \sum_{n = 0}^{N - 1} w(n/N). 
			\end{equation}
			This approach can also be interpreted as a \textit{weighted quasi-Monte Carlo method}; we refer the reader to \cite[Section 1.1.1]{TL26a} for a diagrammatic illustration. In the 1990s, the Hanning window
			\[w_{\mathrm{sin}}(x) := 
			\begin{cases}
				2\sin^2(\pi x), & x \in (0,1), \\
				0, & x \notin (0,1)
			\end{cases}\]
			was employed to accelerate numerical computations for quasi-periodic perturbations of quasi-periodic flows\footnote{This constitutes a continuous analogue of \eqref{lsjq}; despite pursuing distinct objectives, they share similar underlying principles. Recently, we revisit Laskar's frequency map analysis and establish a rigorous theoretical proof of exponential type for both the quasi-periodic and almost periodic cases \cite{TL26c}.}, achieving a convergence rate of polynomial order \cite{Las93a,Las93b,Las99}. This approach proved highly instrumental in celestial mechanics computations at the time and was subsequently termed \emph{frequency map analysis}. Furthermore, a specific exponential weighting function was introduced, exhibiting remarkable asymptotic properties for convergence acceleration \cite[Remark 2]{Las99}; however, its rigorous convergence analysis and numerical implementation for weighted Birkhoff averages were deferred to subsequent studies\footnote{Nevertheless, a rigorous justification for frequency map analysis follows as a direct consequence of \cite[Theorem 1]{Las99}, albeit not explicitly formulated as a theorem therein.}. This weighting function, hereafter referred to as Laskar's function, is given by 
			\begin{equation}\label{weightingfunction}
				w_{\rm Las}(x) := 
				\begin{cases}
					\left( \int_0^1 \exp\left(-s^{-1}(1 - s)^{-1}\right) ds \right)^{-1} \exp\left(-x^{-1}(1 - x)^{-1}\right), & x \in (0,1), \\
					0, & x \notin (0,1).
				\end{cases}
			\end{equation}
			In subsequent applications, its parameterized variant, 
			\begin{equation}\label{bianti}
				w_{\rm Las}^{p,q}(x) := 
				\begin{cases}
					\left( \int_0^1 \exp\left(-s^{-p}(1 - s)^{-q}\right) ds \right)^{-1} \exp\left(-x^{-p}(1 - x)^{-q}\right), & x \in (0,1), \\
					0, & x \notin (0,1)
				\end{cases}
			\end{equation}
			for general $p, q > 0$, has also been extensively employed.
			
			Remarkably, theoretical investigations have demonstrated that this weighted approach significantly accelerates the convergence of the original Birkhoff average \eqref{OPTxz}. The first rigorous theoretical justification for this phenomenon was provided approximately twenty-five years later. Specifically, it was established in \cite{DSSY17,DY18} that, assuming the observable $f$ is of class $C^\ell$ and the rotation vector $\rho$ is Diophantine---where $\ell$, the Diophantine index, and the dimension of the torus satisfy appropriate compatibility conditions---the weighted Birkhoff average \eqref{lsjq} generated by Laskar's function converges uniformly to the spatial average $\int_{\mathbb{T}^d} f(x) dx$ at a finite polynomial rate. Moreover, if $f$ is of class $C^\infty$, this rate can be improved to uniform convergence of arbitrary polynomial order. Analogous conclusions carry over to the continuous setting, as established independently in \cite{DM23} and \cite{TL24a}. Subsequently, by introducing novel techniques and broader frameworks, uniform convergence at arbitrary polynomial and even exponential rates was established in \cite{TL24a} for both finite- and infinite-dimensional toral translations across discrete and continuous dynamical systems. These results rely on specific integrability or smallness conditions that are intimately tied to the regularity of $f$ and the nonresonance properties of $\rho$. In particular, for an analytic observable $f$, the weighted Birkhoff average \eqref{lsjq} associated with Laskar's function \eqref{weightingfunction} (and its continuous counterpart) exhibits \textit{exponential convergence} for almost all $\rho$. More recently, this result was generalized to the \textit{infinite-dimensional} setting in \cite{TL24b}, albeit with a relatively slower convergence rate. Subsequent research has further explored Ces\`aro weighted Birkhoff averages \cite{TL25a}, multiple weighted Birkhoff averages \cite{TL24b}, and weighted averages of decaying waves \cite{TL25b}, in addition to the profound interplay between regularity and nonresonance \cite{Ton26}. By providing \textit{quantitative} refinements to the majority of these prior results, a \textit{comprehensive and user-friendly synthesis} for numerical simulations via the weighting function \eqref{bianti} was developed in \cite{TL26a}; concurrently, the first result on the exponentially accelerated computation of Fourier coefficients was also presented therein. Collectively, all the aforementioned contributions form a robust theoretical foundation for the super convergence observed in practice across various quasi-periodic and almost periodic dynamical systems. In recent years, the volume of literature dedicated to numerical simulations using this weighted approach has expanded rapidly, even outpacing the theoretical advancements discussed above. For representative numerical studies, we refer the reader to \cite{DSSY17,DY18,SM20,MS21,DM23,BM24,CCGd24,RB24,BdJC25,MS25,SM25a,SM25b,RB25,BBC26}; a comprehensive survey can be found in \cite[Section 1.1.2]{TL26a}.
			
			However, in a vein similar to the comments in \cite{DF15}, numerous questions concerning weighted Birkhoff averages for toral translations remain open. In particular, from the standpoints of dynamical systems and number theory, a wealth of intriguing theoretical problems remains open and deserves further investigation. For instance, it is natural to address the fundamental but challenging question of \textit{whether the convergence rate of the weighted Birkhoff average \eqref{lsjq} can be essentially improved by increasing the regularity of the observable or by employing an alternative weighting function}. To this end, as one of the main contributions of this paper, we prove the following informal result, which also makes progress toward the open question recently posed in \cite{PR26}:
			
			\begin{itemize}
				\item Suppose we are given a weighting function satisfying certain properties (see Definition \ref{generalwei}), which in particular encompasses Laskar's function \eqref{weightingfunction} and its parameterized variant \eqref{bianti}. Then, for \textit{almost every} rotation and \textit{specific (or even all)} initial points, the uniform convergence rates of the weighted Birkhoff averages \eqref{lsjq}---whether for observables with finite differentiability, or those in $C^\infty$, logarithmic $C^\infty$, or Gevrey classes---can be \textit{quantitatively} estimated and shown to be \textit{optimal}. Such optimality is demonstrated in twenty-one specific aspects, notably including the regularity indices; see Theorem \ref{OPTT2} and Section \ref{SECOPT} for details. Furthermore, it is established that in \textit{general} settings, this weighted approach is \textit{unimprovable}---that is, no alternative weighting function can provide a faster uniform convergence rate\footnote{For an explanation of this, see Item \ref{opitem:h} in Section \ref{SECOPT}; furthermore, to avoid any ambiguity, the reader is referred to the concluding remark of Section \ref{SECOPT}.}.
			\end{itemize}
			
		Achieving these objectives presents several fundamental difficulties. For instance, establishing optimality requires both proving a general sharp upper bound on the convergence rate for a class of observables with homogeneous regularity and constructing at least one observable that attains an almost matching lower bound. Since classical techniques prove inadequate for the former, we introduce more delicate estimates concerning small divisors, which are detailed in Section \ref{SECNOV}. For the latter, we develop an effective and universal method to construct critical counterexamples by exploiting the near-resonances of almost every rotation. Furthermore, establishing optimality with respect to the initial points necessitates additional technical efforts, particularly when extending the result to hold for arbitrary initial points\footnote{We are grateful to Professor Bassam Fayad for his valuable comments concerning this point.}. We emphasize that while the main results of this paper---namely Theorem \ref{OPTT1}, Corollary \ref{OPTCORO1}, and Theorem \ref{OPTT2}---are stated in terms of toral translations, they apply equally to general dynamical systems that are smoothly conjugated to them; see the discussion in Section \ref{SEC31}.
			
			The remainder of this paper is organized as follows. Section \ref{PRE} introduces the basic definitions and notation fundamental to our analysis. Section \ref{STA} presents our main results concerning weighted Birkhoff averages: Theorem \ref{OPTT1} and Corollary \ref{OPTCORO1} establish lower bounds on the uniform convergence rates, while Theorem \ref{OPTT2} demonstrates the optimality of these results under various regularity conditions (finite differentiability, $C^\infty$ regularity, logarithmic $C^\infty$ regularity, and Gevrey regularity) in both discrete and continuous settings. The optimality is manifested in twenty-one aspects, which are detailed in Section \ref{SECOPT}. Since these conclusions hold for almost all rotations and specific (or even all) initial points, they may be viewed as universal results. The principal contributions of our work, alongside a comparative discussion with existing literature, are thoroughly elaborated in Section \ref{SECNOV}. Finally, Section \ref{SECPRO} is devoted to the proofs of the main results.

	 \section{Preliminaries}\label{PRE}

	We begin with some basic definitions and notation concerning irrational rotations, approximants, and regularity, as well as the weighted Birkhoff averages of interest.
	
To simplify the exposition, we assume $g(x)>0$ and adopt the following asymptotic notation to describe the behavior of functions as $x$ becomes sufficiently large (or small):
	\begin{itemize}
		\item  $f(x)=\mathcal{O}(g(x))$ means that there exists an absolute constant $c>0$ such that $|f(x)| \leqslant c g(x)$. This is frequently abbreviated as $|f(x)|  \lesssim  g(x)$ or $g(x)  \gtrsim  |f(x)|$.
		
			\item  $f(x)=\mathcal{O}^{\#}(g(x))$ means that there exists an absolute constant $c\geqslant 1$ such that $c^{-1}g(x)\leqslant f(x) \leqslant c g(x)$. 
			
			\item $f (x) \sim g(x)$ means that $f (x)$ and $g(x)$ are asymptotically equivalent; in other words, we have $\lim_{x \to +\infty}  {f(x)}/{g(x)} = 1$ (or $ \lim_{x \to 0}  {f(x)}/{g(x)} = 1 $).
			
		\item $f (x)=o(g(x))$ means that for any given $\varepsilon>0$, the inequality $|f (x)| \leqslant \varepsilon g(x)$ holds for all sufficiently large (or small) $x$. This is frequently abbreviated as $|f (x)| \ll g(x)$ or $g(x) \gg |f (x)|$.
	\end{itemize}
	
	We denote by $D^nf$ the $n$-th derivative of a smooth function $f$. We endow the vector space $\mathbb{R}^l$ ($l \in \mathbb{N}^+$) with the supremum norm $\|f\|_{\ell^\infty} := \max_{1 \leqslant j \leqslant l} |f_j|$, which turns it into a Banach space\footnote{We emphasize that this $\ell^\infty$-norm is taken with respect to the vector components of the function rather than its spatial variables. Consequently, in the subsequent discussion, we will explicitly specify the underlying domain whenever this norm is invoked.}. Throughout this paper, $l$ denotes the dimension of the observables, while $d$ represents the dimension of the rotation vectors and initial points. Consequently, both $l$ and $d$ are positive integers.

	 \subsection{Irrational rotations,  approximants, and regularity}\label{SUBSEC21}
	 For $x \in \mathbb{R}$, let $\{x\}$ denote its fractional part, and $\|x\|_{\mathbb{Z}} := \operatorname{dist}(x, \mathbb{Z})$ denote its distance to the nearest integer. Then it is straightforward to verify that there exists an absolute constant $ \widetilde{C} \geqslant 1 $ such that
	 \begin{equation}\label{zhishu}
	 	{{\widetilde C}^{ - 1}}{\left\| x \right\|_\mathbb{Z}} \leqslant \left| {{\exp\left({2\pi ix}\right)} - 1} \right| \leqslant \widetilde C{\left\| x \right\|_\mathbb{Z}}\leqslant\widetilde C \left|x\right|,\quad \forall x \in \mathbb{R}.
	 \end{equation}
	 Furthermore, this implies
	 \begin{equation}\label{zhishu2}
	 	{\left\| {nx} \right\|_\mathbb{Z}} \leqslant \widetilde C  n {\left\| x \right\|_\mathbb{Z}},\quad \forall n\in \mathbb{N}, \quad \forall x \in \mathbb{R}.
	 \end{equation}
	 
	For $ d\geqslant 1 $, let $\mathbb{T}^d:=\mathbb{R}^d/\mathbb{Z}^d$ denote the standard $d$-torus. Consider an irrational number $ \rho  \in {\mathbb{T}}\setminus \mathbb{Q} $ with its continued fraction expansion 
	 \[\rho  = \frac{1}{{{k_1} + \frac{1}{{{k_2} + \frac{1}{{\frac{ \cdots }{{{k_n} + \frac{1}{ \cdots }}}}}}}}}: = \left[ {{k_1},{k_2}, \ldots ,{k_n}, \ldots } \right],\]
	 where the integers $k_n$ for $n \in \mathbb{N}^+$ are called the partial quotients of $ \rho $. Letting $ p_0=0 $ and $ q_0=1 $,  the $n$-th approximant of $ \rho $ is given by
	 \[\frac{{{p_n}}}{{{q_n}}} = \left[ {{k_1},{k_2}, \ldots ,{k_n}} \right], \quad  n \geqslant 1.\]
	 Here, $ p_n $ and $ q_n$ satisfy the following recurrence relations $ {p_{n + 1}} = {k_{n + 1}}{p_n} + {p_{n - 1}} $ and $ {q_{n + 1}} = {k_{n + 1}}{q_n} + {q_{n - 1}} $ for all $ n \geqslant 1 $. Observe that $ \{q_n\}_{n \in \mathbb{N}^+} $ is strictly increasing, which implies $ {q_{n + 2}} \geqslant 2{q_n} $. Consequently, we obtain the bounds $ {q_n} \geqslant {2^{\left( {n - 1} \right)/2}} $ and $ {q_{n + 2k}} \geqslant {2^k}{q_n} $ for all $ n,k \in \mathbb{N}^+ $. These inequalities are known as the \textit{exponential properties} of $ \{q_n\}_{n \in \mathbb{N}^+} $. Moreover, $ {\left\| {{q_n}\rho } \right\|_\mathbb{Z}} = \left| {{q_n}\rho  - {p_n}} \right| $, and 
	 \begin{equation}\label{liangce}
	 	\frac{1}{{2{q_{n + 1}}}} < \left| {{q_n}\rho  - {p_n}} \right| < \frac{1}{{{q_{n + 1}}}},
	 \end{equation}
	and
	\begin{equation}\label{liangce22}
		\|q_n\rho\|_\mathbb{Z}\leqslant\|q\rho\|_\mathbb{Z}, \quad  \forall 0<q<q_{n+1}.
	\end{equation}
	In this context, we recall a fundamental result in both ergodic theory and number theory: the \textit{Denjoy--Koksma inequality} (cf. \cite[Chapter VI]{Her79}).
	\begin{lemma}[Denjoy--Koksma inequality]\label{DENJOYKOKSMA}
		Let $ \rho  \in {\mathbb{T}}\setminus \mathbb{Q}  $ with its approximants	$ \{p_n/q_n\}_{n \in \mathbb{N}^+} $ be given.
	If $\mathcal{F}\colon \mathbb{T} \rightarrow \mathbb{R}$ is a function of bounded variation, with total variation denoted by $\operatorname{Var}(\mathcal{F})$, then
		\[\left|\sum_{j=0}^{q_n-1} \mathcal{F}\left(\theta +j \rho\right)-q_n \int_{\mathbb{T}} \mathcal{F}\left(x\right)  {d} x\right| \leqslant \operatorname{Var}(\mathcal{F}), \quad \forall n \in \mathbb{N}^+, \quad \forall  \theta \in \mathbb{T}.\]
	\end{lemma}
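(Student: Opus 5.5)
The plan is to use the classical approach via the three-distance structure of the orbit segment $\{\theta + j\rho\}_{0\leqslant j<q_n}$. First I will show that, for every $\theta \in \mathbb{T}$, these $q_n$ points are well spaced. Each of the $q_n$ intervals $I_k:=[k/q_n,(k+1)/q_n)$, $0\leqslant k<q_n$, contains exactly one point $\theta+j\rho$, after a suitable shift of the partition. To prove this, write $\rho = p_n/q_n + \delta_n$ with $|\delta_n| = \|q_n\rho\|_{\mathbb{Z}}/q_n < 1/(q_nq_{n+1})$ by \eqref{liangce}. Then $\theta + j\rho = \theta + jp_n/q_n + j\delta_n$, and since $\gcd(p_n,q_n)=1$ the points $\theta + jp_n/q_n$ run exactly once through the lattice $\theta + \frac{1}{q_n}\mathbb{Z}$ modulo $1$. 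The perturbation satisfies $|j\delta_n| < q_n\cdot 1/(q_nq_{n+1}) \leqslant 1/q_n$ for $0\leqslant j<q_n$, and all the $j\delta_n$ have the same sign. Hence, if I choose the partition $I_k=[\theta+k/q_n,\theta+(k+1)/q_n)$ when $\delta_n\geqslant 0$, or its mirror image $(\theta+(k-1)/q_n,\theta+k/q_n]$ when $\delta_n<0$, each interval contains exactly one orbit point.

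Next, since $\int_{\mathbb{T}}\mathcal{F} = \sum_k \int_{I_k}\mathcal{F}$ and $|I_k| = 1/q_n$, I will write
\[
\sum_{j=0}^{q_n-1}\mathcal{F}(\theta+j\rho) - q_n\int_{\mathbb{T}}\mathcal{F} = \sum_{k=0}^{q_n-1} q_n\int_{I_k}\bigl(\mathcal{F}(x_k) - \mathcal{F}(x)\bigr)\,dx,
\]
where $x_k$ is the unique orbit point in $I_k$. For $x, x_k \in I_k$ we have $|\mathcal{F}(x_k)-\mathcal{F}(x)| \leqslant \operatorname{Var}_{\overline{I_k}}(\mathcal{F})$, the variation on the closed interval. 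So each summand is bounded by $\operatorname{Var}_{\overline{I_k}}(\mathcal{F})$. Summing over $k$, using that the intervals overlap only at endpoints and that total variation is additive on adjacent intervals of the circle, gives the bound $\operatorname{Var}(\mathcal{F})$.

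The main obstacle is the bookkeeping in the first step: making sure the one-point-per-interval claim holds with half-open intervals and the correct orientation for each sign of $\delta_n$, including the edge case where an orbit point lands exactly on a partition endpoint. The strict inequality $|j\delta_n|<1/q_n$ from \eqref{liangce} is what rules out collisions. A second subtlety is that $\mathcal{F}$ may be discontinuous at endpoints, but using variation over closed intervals, with additivity $\sum_k \operatorname{Var}_{\overline{I_k}} = \operatorname{Var}$ on the circle, handles this. The case $n$ with $q_n=1$ is immediate since then the claim reads $|\mathcal{F}(\theta)-\int\mathcal{F}|\leqslant \operatorname{Var}(\mathcal{F})$.
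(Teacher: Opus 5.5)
Your proof is correct. The one-point-per-arc claim follows exactly as you argue from $\gcd(p_n,q_n)=1$ and $|j\delta_n|<1/q_{n+1}\leqslant 1/q_n$ (all of one sign), and summing $\operatorname{Var}_{\overline{I_k}}(\mathcal{F})$ over the $q_n$ adjacent arcs recovers the circle variation. The paper gives no proof of its own; it simply cites Herman's book (Chapter VI), and your argument is the standard one behind that cited result.
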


	As in the $ 1 $-dimensional case, one can also consider an irrational (nonresonant/rationally independent)  rotation vector $ \rho  = \left( {{\rho _1}, \ldots ,{\rho _d}} \right) \in {\mathbb{T}^d} $ with $ d \geqslant 2$, that is, $\langle k,\rho\rangle:=\sum_{j=1}^d k_j\rho_j\notin\mathbb{Z}$ in the discrete case, and $\langle k,\rho\rangle\neq 0$ in the continuous case, for all $k=(k_1,\ldots,k_d)\in\mathbb{Z}^d\setminus\{0\}$. Indeed,  this nonresonance condition can be quantified through an \textit{approximation function}.
	
	\begin{definition}[Approximation function]
		A function $ \Delta $  is called an approximation function if it is continuous, strictly monotonically increasing, and satisfies $  \Delta(1) >0$ and $  \Delta(+\infty) =+\infty$.
	\end{definition}
	
	\begin{definition}[$ \Delta $-nonresonance condition]
		Given an approximation function $\Delta$, an irrational vector $\rho \in \mathbb{R}^d$ with $d \geqslant 1$ is said to be $\Delta$-nonresonant if there exists a constant $\gamma > 0$ such that
		\begin{enumerate}[label=(\arabic*)] 
			\item  
			The discrete case: \begin{equation}\label{fgz}
				\| {\left\langle {k,\rho } \right\rangle  } \|_{\mathbb{Z}} \geqslant \frac{\gamma }{{\Delta \left( {\|k\|_{\ell^\infty}} \right)}}, \quad \forall   k \in {\mathbb{Z}^d}\setminus\{0\};
			\end{equation}
			\item 
			 The continuous case: \begin{equation}\label{fgz2}
				| {\left\langle {k,\rho } \right\rangle  } | \geqslant \frac{\gamma }{{\Delta \left( {\|k\|_{\ell^\infty}} \right)}}, \quad \forall   k \in {\mathbb{Z}^d}\setminus\{0\}.
			\end{equation}
		\end{enumerate}
	\end{definition}
	\begin{remark}\label{fenli}
	In the discrete case, \eqref{fgz} holds for almost every\footnote{In the measure-theoretic sense, i.e., having full Lebesgue measure; such vectors are often said to be \textit{universal}.} $ \rho \in \mathbb{R}^d $ with $ \Delta (x)\sim x^{d}(\log x)^2 $.  While in the continuous case, \eqref{fgz2} is satisfied for almost every $ \rho \in \mathbb{R}^d $ with $ \Delta (x)\sim x^{d-1}(\log x)^2 $.
	\end{remark}

	For an irrational vector   $\rho \in \mathbb{R}^d$ with $ d \geqslant 1 $, we define the irrational rotation $\mathscr{T}_\rho^s$ by \eqref{xuanzhuan}. In ergodic theory, it is natural to consider the Birkhoff averages of an observable $f$ on $\mathbb{T}^d$, given by
	\begin{equation}\label{BIK}
		\frac{1}{N}\sum\limits_{n = 0}^{N - 1} {f\left( {\mathscr{T}_\rho ^n\left( \theta  \right)} \right)} \quad\text{and} \quad  \frac{1}{T}\int_0^T {f\left( {\mathscr{T}_\rho ^s\left( \theta  \right)} \right)ds}, \quad \theta  \in {\mathbb{T}^d},
	\end{equation}
	with the aim of investigating their convergence rates to the spatial average $\int_{\mathbb{T}^d} f(x) dx$. Alongside the deterministic convergence results for Birkhoff averages discussed in the Introduction (which are closely related to Denjoy--Koksma-type inequalities), we also note the significant progress made in the random setting \cite{DF14,DF15,DF20}. The present paper, however, focuses exclusively on the deterministic case, leaving the random setting for future investigation.
	
	 To characterize  the explicit regularity for observables, we introduce the Banach space $\mathbb{R}_{\widetilde{\Delta}}^{l}(\mathbb{T}^d)$ as follows.

	\begin{definition}[Banach space $\mathbb{R}_{\widetilde{\Delta}}^{l}(\mathbb{T}^d)$]\label{DEFBANACH}
		Let $d$ and $l$ be positive integers, and consider the  Banach space $ (\mathbb{R}^{l}, \|\cdot\|_{\ell^\infty}) $. Let $f\colon  \mathbb{T}^d\to \mathbb{R}^{l}$ be a smooth  vector-valued observable whose Fourier series is given by
		\begin{equation}\notag
			f(\theta) = \sum_{k \in \mathbb{Z}^d} \widehat{f}(k) \exp(2\pi i \langle k, \theta \rangle), \quad \widehat{f}(k) = \int_{\mathbb{T}^d} f(x) \exp\left(-2\pi i \langle k, x \rangle\right) dx.
		\end{equation}
		For a given approximation function $\widetilde{\Delta}$, we define the Banach space $\mathbb{R}_{\widetilde{\Delta}}^{l}(\mathbb{T}^d)$  consisting of all such observables $f$ satisfying
		\[{ \| {\widehat f ( k  )}  \|_{\ell^\infty}} \leqslant \widetilde \Delta {\left( {{{\left\| k \right\|}_{{\ell ^\infty }}}} \right)^{ - 1}},\quad \forall k \in {\mathbb{Z}^d} \setminus \left\{ 0 \right\},\]
		and
		\[\sum\limits_{k \in {\mathbb{Z}^d} \setminus \left\{ 0 \right\}} {\widetilde \Delta {{\left( {{{\left\| k \right\|}_{{\ell ^\infty }}}} \right)}^{ - 1}}}  <  + \infty .\]
	\end{definition}		
	By Definition \ref{DEFBANACH}, it is evident that any observable $f \in \mathbb{R}_{\widetilde{\Delta}}^{l}(\mathbb{T}^d)$ is well-defined and continuous. Indeed, its Fourier series converges absolutely and uniformly as follows:
			\[{\left\| f(x) \right\|_{{\ell ^\infty }}} \leqslant \sum\limits_{k \in {\mathbb{Z}^d} \setminus \left\{ 0 \right\}} {{{ \| {\widehat f ( k  )}  \|}_{{\ell ^\infty }}}}  \leqslant \sum\limits_{k \in {\mathbb{Z}^d} \setminus \left\{ 0 \right\}} {\widetilde \Delta {{\left( {{{\left\| k \right\|}_{{\ell ^\infty }}}} \right)}^{ - 1}}}  <  + \infty ,\quad \forall x \in {\mathbb{T}^d}.\]
	 	Clearly, the Banach space $\mathbb{R}_{\widetilde{\Delta}}^{l}(\mathbb{T}^d)$ characterizes finite differentiability provided that $\widetilde\Delta(x) \sim x^L$ for some suitable $L>0$, whereas it characterizes $C^\infty$ regularity whenever $\widetilde\Delta(x) \gg x^L$ for all $L>0$. Here and in what follows, all asymptotic properties concerning $\Delta$ are understood to be as $x \to +\infty$. Beyond $C^\infty$ regularity, there exists a class of regularity known as ultra-differentiable regularity, which has attracted considerable interest in the theory of smooth dynamical systems. Specifically, one can consider the following examples: $\widetilde{\Delta}(x) \sim \exp((\log x)(\log \log x)^\sigma)$ with $\sigma > 0$, $\widetilde{\Delta}(x) \sim \exp((\log x)^{\lambda})$ with $\lambda > 1$, and $\widetilde{\Delta}(x) \sim \exp(x^\alpha)$ with $\alpha > 0$. It is worth noting that the first two growth rates are asymptotically very close to polynomial growth. In particular, the second case can be referred to as \textit{logarithmic $C^\infty$ regularity} \cite{TL26b}, whereas the last case corresponds to the well-known \textit{$\alpha$-Gevrey regularity} for $0 < \alpha < 1$ \cite{Pop04,BF19,BF21,TL26b}, reducing to analyticity when $\alpha = 1$. Such cases are classical examples of Hardy field functions \cite{Har71}. In this context, we refer to \cite{Fra15,FH18b,BMR20,Fra22,Ric23,Tsi23} and the references therein for their applications in ergodic theory. We remark at the outset that although one of the main results of the present paper, namely Theorem \ref{OPTT2}, restricts its consideration of ultra-differentiability solely to the logarithmic $C^\infty$ and $\alpha$-Gevrey classes, the underlying techniques can be adapted to other specific ultra-differentiable classes, yielding optimal results. Such extensions are left to the interested reader. Furthermore, from an ergodic-theoretic perspective, the results of the present paper may provide novel insights into the aforementioned literature within the setting of ultra-differentiable regularity.

	\subsection{The weighted Birkhoff average}\label{SUBSECWBA}

	To investigate \textit{general} compactly supported weighting functions, we first extract and generalize the inherent quantitative properties of Laskar's function ${w}_{\rm Las}\left( x \right)$ in \eqref{weightingfunction} and its parameterized variant ${w}_{\rm Las}^{p,q}\left( x \right)$ in \eqref{bianti}. This generalization is \textit{essential} for establishing optimal convergence rates for weighted Birkhoff averages.

	\begin{definition}[Weighting function]\label{generalwei}
		Let $w $ be a non-negative $C^\infty$\footnote{We remark that while the $C^\infty$ assumption is made primarily for convenience in Condition \ref{item:num_1}, it is strictly necessary for Condition \ref{item:num_2}.} weighting function defined on $\mathbb{R}$, satisfying $w(x)=0$ for $x \in \mathbb{R}\setminus [0,1]$ and the normalization condition $\int_0^1 w\left(x\right) dx = 1$.
		\begin{enumerate}[label=(\arabic*)] 
			\item \label{item:num_1} We say that $w  \in C_0^M([0,1])$ with $M \in \mathbb{N}^+ \cup \{+\infty\}$, if 
			\begin{equation}\notag 
				D^j w(0) = D^j w(1) = 0, \quad \forall 0 \leqslant j \leqslant M. 
			\end{equation}
			\item \label{item:num_2} Furthermore, we say that $w  \in \mathcal{W}_\beta$ for $\beta \geqslant 1$, if $w$ satisfies Condition \ref{item:num_1} with $M=+\infty$, and there exist constants $\bar{C}, \lambda > 0$ such that
			\begin{equation} \notag 
				\| D^m w  \|_{L^1(0,1)} \leqslant \bar{C} \lambda^m m^{m\beta}, \quad \forall m \in \mathbb{N}^+.
			\end{equation}
		\end{enumerate}
	\end{definition}

	For Laskar's function ${w}_{\rm Las}\left( x \right)$, the asymptotic property described in Condition \ref{item:num_2} was originally formulated qualitatively in \cite[Lemma 5.3]{TL24a} for some $\beta>6$, while a more quantitative version for its parameterized variant ${w}_{\rm Las}^{p,q}\left( x \right)$ was provided in \cite[Lemma 4.1]{TL25b} with $\beta=1+(\min\{p,q\})^{-1}$. It should be noted that Condition \ref{item:num_2} is \textit{not} an artificial construction; rather, it plays a significant role in various other problems within dynamical systems. For instance, a stronger pointwise version of this condition with $\beta=2$ can be found in \cite{Eli97}\footnote{Verification via Stirling's formula is left to the reader.}. In this paper, we focus on the optimal ergodic rates \textit{primarily} associated with Condition \ref{item:num_2}; although Condition \ref{item:num_1} is of secondary interest, certain results remain optimal in this context as well (see Case \ref{item3:roman_I}--\ref{TH3.3-I-1} in Theorem \ref{OPTT2}). In particular, the results of this paper correspond to the case $\beta=2$ when applied directly to Laskar's function ${w}_{\rm Las}\left( x \right)$.
	
	Throughout this paper, we apply the weighting function $w(x)$ from Definition \ref{generalwei} to the classical Birkhoff averages on the torus \eqref{BIK}, yielding the  (non-uniformly) \textit{weighted Birkhoff averages}. When introducing weighting to the averages, it is essential to ensure that the original limits are preserved. Otherwise, the weighted scheme would lack justification, especially in the context of accelerating convergence rates. Theorem 1.1 in \cite{TL26a}, which serves as an abstract averaging version of the Toeplitz theorem, guarantees this preservation. Consequently, by the unique ergodicity of the irrational rotation, we immediately deduce the following limit-preserving proposition.

	\begin{proposition}\label{fppro}
Let $d$, $l$, and $M$ be positive integers. Suppose $w \in C_0^M([0,1])$, $\rho \in \mathbb{R}^d$ is an irrational rotation vector, and $f \colon \mathbb{T}^d \to \mathbb{R}^l$ is a continuous observable. Then, for any initial point $\theta \in \mathbb{T}^d$, the following identities hold in the $\ell^\infty$ sense:
		\begin{align*}
	&\;\mathop {\lim }\limits_{N \to  + \infty } \frac{1}{{{A_N}}}\sum\limits_{n = 0}^{N - 1} {w\left( {n/N} \right)f\left( {\mathscr{T}_\rho ^n\left( \theta  \right)} \right)}  = \mathop {\lim }\limits_{N \to  + \infty } \frac{1}{{{N}}}\sum\limits_{n = 0}^{N - 1} {f\left( {\mathscr{T}_\rho ^n\left( \theta  \right)} \right)} \\
	 = &\;\mathop {\lim }\limits_{T \to  + \infty } \frac{1}{T}\int_0^T {w\left( {s/T} \right)f\left( {\mathscr{T}_\rho ^s\left( \theta  \right)} \right)ds}  = \mathop {\lim }\limits_{T \to  + \infty } \frac{1}{T}\int_0^T {f\left( {\mathscr{T}_\rho ^s\left( \theta  \right)} \right)ds} =\int_{{\mathbb{T}^d}} {f\left( x \right)dx} .
		\end{align*}
	\end{proposition}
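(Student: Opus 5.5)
The plan is to reduce everything to unique ergodicity of the irrational rotation, together with a Riemann-sum (Abel summation) argument for the weights. Fix $\theta\in\mathbb{T}^d$, and argue componentwise in $\ell^\infty$, so we may take $l=1$.

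\textbf{Step 1: the unweighted averages.} By the standard Weyl argument, the rotation $\mathscr{T}_\rho$ (respectively the flow $\mathscr{T}_\rho^s$) is uniquely ergodic with respect to Lebesgue measure on $\mathbb{T}^d$. Hence for continuous $f$ both unweighted averages converge to $\int_{\mathbb{T}^d}f$, uniformly in $\theta$. Concretely, the claim is immediate for trigonometric polynomials: each mode $k\neq0$ contributes a geometric sum bounded by $2/|e^{2\pi i\langle k,\rho\rangle}-1|$ in the discrete case, or an integral bounded by $1/(\pi|\langle k,\rho\rangle|)$ in the continuous case. It then extends to all continuous $f$ by density.

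\textbf{Step 2: the weighted discrete average.} Set $S_n:=\sum_{j=0}^{n-1}(f(\mathscr{T}_\rho^j\theta)-\bar f)$, where $\bar f=\int f$. By Step 1, $S_n=o(n)$ uniformly in $\theta$, and trivially $|S_n|\leqslant 2\|f\|_\infty n$.

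First, since $w$ is continuous on $[0,1]$, the Riemann sums give $A_N/N\to\int_0^1w=1$, so $A_N\sim N$.

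Next, apply Abel summation:
\[
\sum_{n=0}^{N-1}w(n/N)\bigl(f(\mathscr{T}_\rho^n\theta)-\bar f\bigr)=w\bigl((N-1)/N\bigr)S_N-\sum_{n=1}^{N-1}\bigl(w(n/N)-w((n-1)/N)\bigr)S_n .
\]
The boundary term is $o(N)$, because $w(1)=0$ and $w$ is continuous, so $w((N-1)/N)\to0$.

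For the sum, use $|w(n/N)-w((n-1)/N)|\leqslant\|Dw\|_\infty/N$, which holds since $w\in C^\infty$. Given $\varepsilon>0$, choose $n_0$ with $|S_n|\leqslant\varepsilon n$ for $n\geqslant n_0$. Then the sum is bounded by
\[
\|Dw\|_\infty\Bigl(2\|f\|_\infty n_0^2/N+\varepsilon N\Bigr).
\]
Dividing by $A_N\sim N$ and letting $N\to\infty$ and then $\varepsilon\to0$ gives
\[
\frac{1}{A_N}\sum_{n=0}^{N-1} w(n/N)f(\mathscr{T}_\rho^n\theta)\to\bar f .
\]
Alternatively, this step is exactly the abstract Toeplitz-type Theorem 1.1 of \cite{TL26a} quoted before the proposition, and that theorem could be invoked directly.

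\textbf{Step 3: the weighted continuous average.} The continuous case runs the same way, with integration by parts replacing Abel summation. Put $F(t)=\int_0^t(f(\mathscr{T}_\rho^s\theta)-\bar f)\,ds$, so that $F(t)=o(t)$. Then
\[
\frac1T\int_0^Tw(s/T)\bigl(f(\mathscr{T}^s_\rho\theta)-\bar f\bigr)ds=\frac{w(1)F(T)}{T}-\frac1{T^2}\int_0^T Dw(s/T)F(s)\,ds .
\]
The first term vanishes because $w(1)=0$. The second term is $o(1)$ by the same splitting at a fixed $t_0$. Finally, $\frac1T\int_0^Tw(s/T)\,ds=\int_0^1w=1$, which identifies the limit.

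The only mildly delicate point is Step 2: controlling the weighted sum uniformly requires the $o(n)$ bound on $S_n$ for large $n$, together with crude bounds for small $n$. Only the continuity of $w$, its bounded derivative, the vanishing $w(1)=0$, and the normalization $\int_0^1 w=1$ are used. In particular, the full strength of $w\in C_0^M$ is not needed.
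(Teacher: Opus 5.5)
Your argument is correct and follows the same route as the paper. Unique ergodicity of the rotation (and of the flow) gives the unweighted limits, and a Toeplitz-type transfer passes these limits to the weighted averages; the paper simply invokes that transfer as Theorem~1.1 of \cite{TL26a}, whereas your summation-by-parts and integration-by-parts estimates make it self-contained. As you note, this uses far less than $w\in C_0^M([0,1])$; in fact even $w(1)=0$ is unnecessary, since $S_N=o(N)$ and $F(T)=o(T)$ already kill the boundary terms.
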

	
	Since we work with the weighting function $w(x)$ given in Definition \ref{generalwei}, in view of Proposition \ref{fppro} and to simplify the exposition, let us define the errors of the weighted Birkhoff averages in the discrete and  continuous cases, respectively:
	\[\mathbf{Error}_N^{\mathrm{dis}}\left(w,f,\rho,\theta\right): = {\left\| {\frac{1}{{{A_N}}}\sum\limits_{n = 0}^{N - 1} {w\left( {n/N} \right)f\left( {\mathscr{T}_\rho ^n\left( \theta  \right)} \right)}  - \int_{\mathbb{T}^d} {f\left( x \right)dx} } \right\|_{{\ell ^\infty }}}, \quad \theta \in \mathbb{T}^d,\]
	and 
	\[\mathbf{Error}_T^{\mathrm{con}}\left(w,f,\rho,\theta\right) := {\left\| {\frac{1}{T}\int_0^T {w\left( {s/T} \right)f\left( {\mathscr{T}_\rho ^s\left( \theta  \right)} \right)ds}  - \int_{{\mathbb{T}^d}} {f\left( x \right)dx} } \right\|_{{\ell ^\infty }}},\quad \theta \in \mathbb{T}^d.\]
	 It should be emphasized that these errors depend on the initial point $\theta$; this explicit dependence will be demonstrated in Section \ref{STA}.

	For irrational rotations and sufficiently smooth observables, it is possible to achieve \textit{rapid convergence} for weighted Birkhoff averages. Specifically, we refer to \cite[Theorem 2.6]{TL24a} for a typical result concerning the Banach space $\mathbb{R}^l_{\widetilde{\Delta}}(\mathbb{T}^d)$, which is formulated as follows.
			
			\begin{theorem}\label{OPTYINDL}
	Let $d,l$,and $M$ be positive integers. Let $w \in C_0^M([0,1])$ and $f \in \mathbb{R}^l_{\widetilde{\Delta}}(\mathbb{T}^d)$. Suppose $\rho \in \mathbb{R}^d$ is $\Delta$-nonresonant, where $\Delta$ and $\widetilde{\Delta}$ are given approximation functions. Assume there exists an integer $m \geqslant 2$ such that the following integrability condition holds:
				\begin{equation}\label{keji}
					\int_1^{ + \infty } {\frac{{{r^{d - 1}}{\Delta ^m}\left( r \right)}}{{\widetilde \Delta \left( r \right)}} {d}r}  <  + \infty .
				\end{equation}
				Then, as $N \to +\infty$ and $T \to +\infty$,  we have
				\begin{equation}\notag
				\mathbf{Error}_N^{\mathrm{dis}}\left(w,f,\rho,\theta\right) =\mathcal{O} (N^{-m} )\quad \text{and}\quad  \mathbf{Error}_T^{\mathrm{con}}\left(w,f,\rho,\theta\right)  =\mathcal{O} (T^{-m} ),\quad \forall \theta  \in \mathbb{T}^d .
				\end{equation}
	\end{theorem}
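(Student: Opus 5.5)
The plan is to expand $f$ in its Fourier series and reduce the error to a weighted exponential sum for each frequency. For the discrete case, the error vector equals
\[
\frac{1}{A_N}\sum_{k\neq 0}\widehat f(k)\exp(2\pi i\langle k,\theta\rangle)\,S_N(k),\qquad S_N(k):=\sum_{n=0}^{N-1} w(n/N)\exp(2\pi i n\langle k,\rho\rangle).
\]
Interchanging the sums is justified by the absolute convergence guaranteed by $f\in\mathbb{R}^l_{\widetilde\Delta}(\mathbb{T}^d)$. We also have $A_N = N\int_0^1 w + \mathcal{O}(N^{-M})$, so $A_N\sim N$; this follows from Euler--Maclaurin, because all derivatives of $w$ up to order $M$ vanish at the endpoints. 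It then suffices to prove the frequency-wise bound
\[
|S_N(k)|\lesssim \min\Big\{N,\ N^{1-m}\|\langle k,\rho\rangle\|_{\mathbb{Z}}^{-m}\Big\},
\]
and to sum it against the Fourier decay.

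For the bound on $S_N(k)$, set $z=\exp(2\pi i\langle k,\rho\rangle)\neq 1$ and apply Abel summation $m$ times. Summation by parts gives $S_N(k)=(z-1)^{-1}\sum_n \big(w(n/N)-w((n+1)/N)\big)z^{n+1}$, with no boundary terms because $w$ vanishes at $0$ and $1$ (recall $w(N/N)=w(1)=0$). Iterating replaces $w$ by its $m$-th forward difference, each time gaining a factor $(z-1)^{-1}$. Boundary terms never appear, since the differences of $w$ vanish at the endpoints to the relevant order; this uses $D^jw(0)=D^jw(1)=0$ and requires $M\geqslant m-1$, which I will assume is what $w\in C_0^M$ provides (otherwise the rate is capped at $M$). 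The $m$-th difference is bounded by $N^{-m}\|D^m w\|_\infty$. Summing over the $N$ terms and using \eqref{zhishu} gives $|S_N(k)|\lesssim N^{1-m}\|\langle k,\rho\rangle\|_{\mathbb{Z}}^{-m}$. By \eqref{fgz}, this is at most $N^{1-m}\gamma^{-m}\Delta(\|k\|_{\ell^\infty})^m$.

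To finish the discrete case, divide by $A_N\sim N$ and obtain
\[
\mathbf{Error}_N^{\mathrm{dis}}\lesssim N^{-m}\sum_{k\neq0}\frac{\Delta(\|k\|_{\ell^\infty})^m}{\widetilde\Delta(\|k\|_{\ell^\infty})}.
\]
The number of $k$ with $\|k\|_{\ell^\infty}=r$ is $\mathcal{O}(r^{d-1})$. Since $\Delta$ is monotone, comparing this lattice sum with the integral \eqref{keji} shows it is finite. One point needs care here: $\widetilde\Delta$ is increasing, but the ratio $\Delta^m/\widetilde\Delta$ need not be monotone. I would handle this by bounding each shell $r\leqslant\|k\|_{\ell^\infty}<r+1$ with $\Delta(r+1)^m/\widetilde\Delta(r)$. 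If that causes trouble, I would instead interpret \eqref{keji} in its standard summable form, as in \cite{TL24a}. The resulting bound is uniform in $\theta$, which gives the first claim.

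In the continuous case I would run the same argument with integration by parts in place of Abel summation. Here
\[
\frac1T\int_0^T w(s/T)e^{2\pi i s\langle k,\rho\rangle}\,ds=\int_0^1 w(x)e^{2\pi i T x\langle k,\rho\rangle}\,dx,
\]
and integrating by parts $m$ times, again with no boundary terms, gives the bound $(2\pi T|\langle k,\rho\rangle|)^{-m}\|D^mw\|_{L^1}$. Combining this with \eqref{fgz2} and the same summation yields $\mathcal{O}(T^{-m})$. I expect the main obstacle to be the boundary-term bookkeeping in the discrete $m$-fold summation by parts, together with the matching estimate $A_N=N+\mathcal{O}(N^{-m})$. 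Both reduce to the endpoint vanishing of the derivatives of $w$, which is why I would carefully track how large $M$ must be relative to $m$.
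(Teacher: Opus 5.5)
Your argument is correct in outline, and it has the same skeleton as the paper's treatment of such estimates. The theorem is only quoted from \cite[Theorem 2.6]{TL24a}, but the same computation appears in \eqref{LISANFB4}--\eqref{LISANFB1} and in the bound for $\mathcal{S}_1$. The shared steps are: Fourier expansion, a frequency-wise bound $N^{1-m}\|\langle k,\rho\rangle\|_{\mathbb Z}^{-m}$, the nonresonance condition, and summation against the Fourier decay. Your continuous case is exactly the integration by parts \eqref{FBJF}.

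The real difference is the discrete key lemma. The paper uses Poisson summation, $\sum_{n}w(n/N)e^{2\pi i n x}=N\sum_{n\in\mathbb Z}\int_0^1 w(z)e^{2\pi i N(x-n)z}\,dz$, and integrates by parts in each term. It then needs $\sum_{n\in\mathbb Z}|x-n|^{-m}\lesssim\|x\|_{\mathbb Z}^{-m}$, which is where $m\geqslant 2$ is used. Your $m$-fold Abel summation yields the factor $(z-1)^{-m}$ directly. It avoids Poisson summation and that auxiliary series, and it would work even for $m=1$. The Poisson route reduces the discrete sum verbatim to the continuous integral. That is convenient when the number of integrations by parts depends on $k$ and carries the explicit constants $\bar C\lambda^m m^{m\beta}$, as in the exponential and Gevrey cases. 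Your route can do this too, since the $\ell^1$-norm of the $m$-th difference sequence of $n\mapsto w(n/N)$ is at most $N^{1-m}\|D^m w\|_{L^1(0,1)}$.

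There are three smaller points to correct.

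First, summing over all of $\mathbb Z$ with finitely supported sequences produces no boundary terms at all. The endpoint conditions are needed only so that the zero extension of $w$ is $C^m$ and the $m$-th differences are uniformly $\mathcal O(N^{-m})$ near $n=0$ and $n=N$. That means $D^jw(0)=D^jw(1)=0$ for $j\leqslant m-1$, so your requirement $M\geqslant m-1$ is right, and it is equally implicit in \eqref{FBJF}.

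Second, you only need $A_N\gtrsim N$, because $A_N^{-1}\sum_n w(n/N)\widehat f(0)=\widehat f(0)$ exactly. The Euler--Maclaurin expansion of $A_N$ is unnecessary.

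Third, your proposed shell bound $\Delta(r+1)^m/\widetilde\Delta(r)$ goes the wrong way. On $[r,r+1]$ the integrand of \eqref{keji} only dominates $r^{d-1}\Delta(r)^m/\widetilde\Delta(r+1)$. There are two workable fixes:
\begin{itemize}
\item compare the shell $\|k\|_{\ell^\infty}=j$ with $\int_{j-1}^{j}$, using $\Delta(j)\lesssim\Delta(j-1)$, which holds for the $\Delta$ of Remark \ref{fenli}, the only case in which the paper applies the theorem;
\item or compare it with $\int_j^{j+1}$, using $\widetilde\Delta(j+1)\lesssim\widetilde\Delta(j)$.
\end{itemize}
For completely arbitrary approximation functions, some such regularity is genuinely needed to pass from \eqref{keji} to the lattice sum. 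This is a caveat on the quoted statement, not a defect specific to your proof.
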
 
	Indeed, by employing approaches similar to those in \cite{TL25b,TL26a}, the uniform convergence with respect to the initial point $\theta$ could be established in a stronger sense than that of Theorem \ref{OPTYINDL}. Beyond convergence in the supremum norm, the weighted Birkhoff average might also converge in the $C^k$ norm, the Gevrey norm, or even the analytic norm. For instance, consider a Diophantine rotation vector $\rho\in\mathbb{R}^d$ with exponent $\tau>d\geqslant1$ (corresponding to the $x^{\tau}$-nonresonance condition in \eqref{fgz}) and an observable extending analytically to the complex strip $\left\|\operatorname{Im}x\right\|_{\ell^\infty}\leqslant\mathscr{R}$, endowed with the analytic norm defined by$$\|f\|_{\mathscr{A},\mathscr{R}}:=\sum_{k\in\mathbb{Z}^d}\|\widehat{f}(k)\|_{\ell^\infty}\exp(2\pi\mathscr{R}\|k\|_{\ell^\infty})<+\infty,\quad\mathscr{R}>0.$$Under these conditions, the weighted Birkhoff average converges at an exponential rate in the aforementioned norm on the narrower strip $\left\|\operatorname{Im}\theta\right\|_{\ell^\infty}\leqslant\mathscr{R}-\epsilon$ for any $0<\epsilon <\mathscr{R}$. This narrowing of the domain reflects the loss of regularity caused by small divisors, a phenomenon well known in KAM theory; see, for instance, \cite{Pop04,Sal04,Bou05,BF19,BMP20,BF21,MP21,TL26b} and the references therein.

	We conclude this section by noting that, beyond convergence acceleration, various weighted Birkhoff averages have been explored for other purposes, exhibiting slower convergence rates; see, for instance, \cite{KT03,GT12,Bou13,BSZ13,LS15,Wan17,FJ18,FH18a,Fan19,HWY19,Bog20,Fan21,AW23,PR26} and the references therein.
	\section{Main results: Optimal uniform convergence rates for weighted Birkhoff averages}\label{STA}

	\subsection{Main results}\label{SEC31}
	In this section, we present three principal results concerning the weighted Birkhoff averages introduced in Section \ref{SUBSECWBA}. Theorem \ref{OPTT1} (Main Theorem I) establishes lower bounds for the convergence rates of these averages on $\mathbb{T}$. Corollary \ref{OPTCORO1} then extends this result to $\mathbb{T}^d$, subject to certain necessary modifications. Finally, Theorem \ref{OPTT2} (Main Theorem II) addresses the optimality of these convergence rates by simultaneously investigating the corresponding upper bounds across four typical regularity settings: finite differentiability and its variants, $C^\infty$ regularity, logarithmic $C^\infty$ regularity, and Gevrey regularity.

	All the aforementioned results systematically address both discrete and continuous frameworks, which exhibit fundamental differences. Theorem \ref{OPTT1}, Corollary \ref{OPTCORO1}, and the vast majority of the assertions within Theorem \ref{OPTT2} constitute entirely novel contributions, in terms of both their conclusions and the underlying technical methodologies; only a small portion of Theorem \ref{OPTT2} consists of established historical results or variations thereof. It should be emphasized that these results establish, for the first time, lower bound estimates for the convergence rates of (non-uniformly) weighted Birkhoff averages, and furthermore demonstrate their \textit{optimality}.
	
	 The present section is devoted exclusively to the precise statements of these results alongside their interpretations. The optimality of these results is illustrated in Section \ref{SECOPT}, while a comprehensive exposition of our primary contributions and a comparison with previous literature are deferred to Section \ref{SECNOV}.

	 \begin{theorem}[\textbf{Main Theorem I}]\label{OPTT1}
				Let $w  \in \mathcal{W}_{\beta}$ be a weighting function with $\beta \geqslant 1$, and let $\widetilde{\Delta} $ be an approximation function satisfying $x^\varepsilon \leqslant \widetilde{\Delta}(x) \leqslant \exp(x^\zeta)$ for some $\varepsilon > 0$ and $0 < \zeta < \beta^{-1}$. Then the following statements hold:
				\begin{enumerate}[label=(\Roman*)] 
					\item \label{item:roman_I} For almost every $\rho \in \mathbb{R}$, there exists an observable $\Psi \in \mathbb{R}_{\widetilde{\Delta}}(\mathbb{T} )$ and a dense set $\Xi \subset \mathbb{T}$ with $0 \in \Xi$ such that for every $\theta \in \Xi$, the following inequality holds for infinitely many $N\in\mathbb{N}^+$:
					\[	\mathbf{Error}_N^{\mathrm{dis}}\left(w,\Psi,\rho,\theta\right) \geqslant \frac{1}{{4\widetilde \Delta \left( N \right)}}.\]
					Similarly, for almost every $\rho \in \mathbb{R}^2$,	there exists an observable $\Phi \in \mathbb{R}_{\widetilde{\Delta}}(\mathbb{T}^2 )$ and a  set $\Theta \subset \mathbb{T}^2$ with $(0,0) \in \Theta$ such that for every $\theta \in \Theta$, the following inequality holds for a sequence   $T \to +\infty$:
					\[\mathbf{Error}_T^{\mathrm{con}}\left(w,\Phi,\rho,\theta\right) \geqslant \frac{1}{{4\widetilde \Delta \left( T \right)}}.\] 
					\item  \label{item:roman_II} For almost every $\rho \in \mathbb{R}$, there exists an observable $\Psi^* \in \mathbb{R}_{\widetilde{\Delta}}^2 (\mathbb{T} )$  such that  the following inequality holds for infinitely many $N\in\mathbb{N}^+$:
					\[	\mathbf{Error}_N^{\mathrm{dis}}\left(w,\Psi^*,\rho,\theta\right) \geqslant \frac{1}{{4\widetilde \Delta \left( N \right)}} ,\quad \forall   \theta \in \mathbb{T}. \] 		
				Similarly, 	for almost every $\rho \in \mathbb{R}^2$, there exists an observable 	$\Phi^* \in \mathbb{R}_{\widetilde{\Delta}}^2 (\mathbb{T}^2 )$  such that the following inequality holds for a sequence $T \to +\infty$:
					\[ 	\mathbf{Error}_T^{\mathrm{con}}\left(w,\Phi^*,\rho,\theta\right) \geqslant \frac{1}{{4\widetilde \Delta \left( T \right)}},\quad \forall   \theta \in \mathbb{T}^2.\]	  
					\item  \label{item:roman_III} For almost every $\rho \in \mathbb{R}$ and every $ \theta \in \mathbb{T} $, there exists an observable $\Psi_* \in \mathbb{R}_{\widetilde{\Delta}} (\mathbb{T} )$  such that  the following inequality holds for infinitely many $N\in\mathbb{N}^+$:
					\[	\mathbf{Error}_N^{\mathrm{dis}}\left(w,\Psi_*,\rho,\theta\right) \geqslant \frac{1}{{4\widetilde \Delta \left( N \right)}}.\] 
					Similarly, for almost every $\rho \in \mathbb{R}^2$ and every $ \theta \in \mathbb{T}^2 $,	there exists an observable  $\Phi_* \in \mathbb{R}_{\widetilde{\Delta}} (\mathbb{T}^2 )$ such that  the following inequality holds for a sequence $T \to +\infty$:
					\[\mathbf{Error}_T^{\mathrm{con}}\left(w,\Phi_*,\rho,\theta\right) \geqslant \frac{1}{{4\widetilde \Delta \left( T \right)}}.\]
				\end{enumerate}
		Indeed, the observables in Cases \ref{item:roman_I} and \ref{item:roman_III} can be chosen from the space ${\mathbb{R}}_{\widetilde\Delta}^l(\cdot)$ for any $l\geqslant 1$, whereas those in Case \ref{item:roman_II} can be chosen as ${\mathbb{R}}_{\widetilde\Delta}^l(\cdot)$ for any $l\geqslant 2$.
	\end{theorem}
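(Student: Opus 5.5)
The construction is lacunary. The observable is a sparse Fourier series supported on the near-resonant frequencies of $\rho$, namely the convergent denominators $q_n$ in the discrete case. For a single mode $e(q\theta)$, with $e(x):=\exp(2\pi i x)$, the weighted average is explicit:
\[
\frac{1}{A_N}\sum_{n=0}^{N-1} w(n/N)\, e\big(q(\theta+n\rho)\big)=\frac{e(q\theta)}{A_N}\sum_{n=0}^{N-1} w(n/N)\, e(n\|q\rho\|),
\]
up to the sign of $q\rho-p$.

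By Poisson summation, together with the vanishing of all derivatives of $w$ at $0$ and $1$ and the bound $\|D^m w\|_{L^1}\le \bar C\lambda^m m^{m\beta}$, this equals $e(q\theta)\,\widehat w(-N\delta)$ plus an error that is superpolynomially small in $N$. Here $\delta=q\rho-p$ and $\widehat w$ is the Fourier transform of $w$ on $\mathbb{R}$. If we choose $N=N_n$ with $N_n|\delta_n|\le c$ small, then $|\widehat w(-N\delta)-1|\le 2\pi c\int x w\le 1/10$. So that single mode contributes an error of modulus roughly $|\widehat\Psi(q_n)|$, essentially undamped.

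Concretely, in Case (I) on $\mathbb{T}$ I set
\[
\Psi(\theta)=\sum_{j}\frac{\cos(2\pi q_{n_j}\theta)}{\widetilde\Delta(q_{n_j})},
\]
over a sparse subsequence $n_j$, and take $N_j:=q_{n_j}$. By \eqref{liangce}, $N_j|\delta_{n_j}|<q_{n_j}/q_{n_j+1}$, so I need $q_{n+1}\ge C q_n$ along a subsequence. For almost every $\rho$ the partial quotients are unbounded (Borel--Bernstein), so such $n_j$ exist. Membership $\Psi\in\mathbb{R}_{\widetilde\Delta}$ follows because the coefficient sits at frequency $\|k\|=q_{n_j}$ with exactly the value $\widetilde\Delta(q_{n_j})^{-1}$. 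Summability follows from the lacunarity and from $\widetilde\Delta(x)\ge x^\varepsilon$, after thinning so that $\widetilde\Delta(q_{n_{j+1}})\ge 2\widetilde\Delta(q_{n_j})$.

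At $N=N_J$ the error splits into three pieces.
\begin{enumerate}[label=(\alph*)]
\item The mode $j=J$ gives about $\cos(2\pi q_{n_J}\theta)/\widetilde\Delta(N_J)\cdot(1-O(c))$.
\item The modes $j<J$ are low frequencies with $\|q_{n_j}\rho\|\ge 1/(2q_{n_j+1})\ge 1/(2q_{n_{J}})$. For these, $|\widehat w(N\delta)|\le C_m (N|\delta|)^{-m}$ in general. I will use the sharper Gevrey-type decay $|\widehat w(\xi)|\lesssim \exp(-c|\xi|^{1/\beta})$, which follows from $w\in\mathcal W_\beta$ by optimizing over $m$. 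This is very small provided $N_J\|q_{n_j}\rho\|$ is large, which requires the gaps $q_{n_j+1}/q_{n_j}$ not too large relative to $N_J$. I will handle this by sparsifying so that $q_{n_{j}+1}\ll q_{n_J}/(\log q_{n_J})^{\beta+1}$ for $j<J$.
\item The tail $j>J$ is bounded trivially by $\sum_{j>J}\widetilde\Delta(q_{n_j})^{-1}$, which is much less than $\widetilde\Delta(N_J)^{-1}$ by thinning.
\end{enumerate}
The upper bound $\widetilde\Delta\le \exp(x^\zeta)$ with $\zeta<1/\beta$ is exactly what makes the low-mode contributions, of size $\exp(-c (N\delta)^{1/\beta})$, negligible against $1/\widetilde\Delta(N_J)$. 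One must also check $\sum_{j<J}$ uniformly. Taking $\theta\in\Xi$, say dyadic rationals relative to the $q_{n_j}$ or simply points with $\cos(2\pi q_{n_J}\theta)\ge 1/2$ infinitely often (e.g. $\theta=0$ and its translates by rationals), yields the bound $1/(4\widetilde\Delta(N))$.

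For Case (II), valid for all $\theta$, I use two components, $\cos$ and $\sin$ of the same modes, so that $\max(|\cos|,|\sin|)\ge 1/\sqrt2$ at every $\theta$. For Case (III), I fix $\theta$ first and insert phases $\phi_j=-q_{n_j}\theta$ into $\cos(2\pi q_{n_j}(x-\theta))$, which makes the leading term $1/\widetilde\Delta(N_J)$ at $\theta$. The continuous $\mathbb{T}^2$ analogue replaces convergents by the near-resonant vectors $k\in\mathbb{Z}^2$ with $|\langle k,\rho\rangle|$ small, which come from the continued fraction of $\rho_1/\rho_2$. It uses the identity
\[
\frac1T\int_0^T w(s/T)e(s\langle k,\rho\rangle)\,ds=\widehat w\big(-T\langle k,\rho\rangle\big)
\]
exactly, with no Poisson error. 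Choosing $T_J\approx c/|\langle k_J,\rho\rangle|$ gives $T\sim\|k\|$ up to the needed comparability, and the rest is identical.

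The main obstacle is the simultaneous choice of the subsequence. We need large gaps $q_{n_J+1}\ge Cq_{n_J}$, so the top mode is undamped at $N\approx q_{n_J}$. We also need every earlier mode $j<J$ to have $N_J\|q_{n_j}\rho\|\ge (\log \widetilde\Delta(N_J))^{\beta}\cdot C$, which requires $q_{n_j+1}$ not too large compared with $q_{n_J}$. For almost every $\rho$ this is possible: unbounded partial quotients give the first condition, and having $n_J$ much larger than $n_{J-1}+1$ gives the second, with $q_{n_J}\ge 2^{(n_J-n_{J-1}-1)/2}q_{n_{J-1}+1}$ by the exponential property. Still, it must be checked that $(\log\widetilde\Delta)^\beta\le N^{\zeta\beta}$ is compatible with $N_J/q_{n_j+1}$ growth, and this is where $\zeta<\beta^{-1}$ enters. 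I also need the remainder in the discrete Poisson summation to be controlled uniformly via the $\mathcal W_\beta$ bound.
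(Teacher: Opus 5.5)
Your proposal follows the paper's proof in all essentials: the same lacunary observable $\operatorname{Re}\sum_j e(q_{m_j}\theta)/\widetilde\Delta(q_{m_j})$ on convergent denominators followed by a large partial quotient, and $N=q_{m_s}$. The split is the same: earlier modes, the resonant mode, and the tail. The earlier modes are handled by Poisson summation with the $\mathcal W_\beta$ bounds optimized over the number of integrations by parts, giving $\exp(-c(q_{m_s}/q_{m_j+1})^{1/\beta})$. This is exactly where $\zeta<\beta^{-1}$ enters, via the paper's requirement (c) on $\{m_s\}$. The tail gets a trivial bound, and Case (II) uses the $(\operatorname{Re},\operatorname{Im})$ pair. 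The paper treats the resonant mode more elementarily: it bounds $|e(q_{m_s}(\iota+n)\rho)-1|\lesssim(\iota+n)/q_{m_s+1}$ directly instead of passing through $\widehat w(-N\delta)$, but the two are equivalent.

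\textbf{Initial points in Case (I).} The paper takes $\Xi=\{\{\iota\rho\}\}_{\iota\in\mathbb N}$, where $e(q_{m_s}\iota\rho)=e(\iota(q_{m_s}\rho-p_{m_s}))\to 1$. Your set $\{\theta:\cos(2\pi q_{n_J}\theta)\geqslant 1/2 \text{ for infinitely many } J\}$ also works and contains the paper's set. It is even a dense $G_\delta$, since each $\bigcup_{J\geqslant K}\{\cos(2\pi q_{n_J}\theta)>1/2\}$ is open and dense, and Baire applies. You must supply that density argument, though. Your example of rational translates of $0$ is not justified: for $\theta=1/4$ and a subsequence of odd $q_{n_J}$ one gets $\cos(2\pi q_{n_J}\theta)=0$, and then the resonant mode no longer dominates. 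Nothing forces $b\mid q_{n_J}$ infinitely often.

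\textbf{Case (III).} Your translation $\Psi_*:=\Psi(\cdot-\theta)$ keeps the same coefficient moduli and reduces everything to $\theta=0\in\Xi$. This is cleaner than the paper's device of selecting a component of $\Psi^*$ according to $\theta$, which tacitly needs a pigeonhole over $s$.

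\textbf{Two corrections.} First, the sparsification in your item (b), $q_{n_j+1}\ll q_{n_J}/(\log q_{n_J})^{\beta+1}$, only yields decay $\exp(-c(\log N)^{1+1/\beta})$ for the earlier modes. That is insufficient, e.g., for $\widetilde\Delta(x)=\exp(x^\zeta)$. Replace it by the condition implicit in your last paragraph: $q_{n_{J-1}+1}\leqslant q_{n_J}^{1-\zeta\beta}/C$ with $C$ large. Second, in the continuous case your choice $T_J\approx c/|\langle k_J,\rho\rangle|$ is in general much larger than $\|k_J\|_{\ell^\infty}$, not comparable to it. This is harmless, since monotonicity of $\widetilde\Delta$ means you only need $T\geqslant\|k_J\|_{\ell^\infty}$ and $T|\langle k_J,\rho\rangle|\leqslant c$. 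The paper's choice $T=q_{m_s}$ is simpler.
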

	\begin{remark}\label{RE31}
		For the sake of clarity, we present the explicit constructions prior to the proof. For almost every   $\rho \in \mathbb{R}$, the observables $\Psi$ and $\Psi^*$ in Cases \ref{item:roman_I} and \ref{item:roman_II} can be constructed as
		\[\Psi \left( \theta  \right) = \operatorname{Re} \sum\limits_{j = 1}^\infty  {\frac{{{\exp\left({2\pi i{q_{{m_j}}}\theta }\right)}}}{{\widetilde \Delta \left( {{q_{{m_j}}}} \right)}}}\quad \text{and} \quad {\Psi ^*}\left( \theta  \right) = {\left( {\operatorname{Re} \sum\limits_{j = 1}^\infty  {\frac{{\exp \left( {2\pi i{q_{{m_j}}}\theta } \right)}}{{\widetilde\Delta \left( {{q_{{m_j}}}} \right)}}} ,\;\operatorname{Im} \sum\limits_{j = 1}^\infty  {\frac{{\exp \left( {2\pi i{q_{{m_j}}}\theta } \right)}}{{\widetilde\Delta \left( {{q_{{m_j}}}} \right)}}} } \right)^ \top } \]
		for a suitably chosen increasing sequence of positive integers $ \{m_j\}_{j\in \mathbb{N}^+} $, where $ q_{m_j} $ is the  denominator of the $ {m_j} $-th  approximant of $ \rho $. Meanwhile, the observable $\Psi_* $ in Case \ref{item:roman_III} can be constructed as one of the two components of $\Psi^*$, depending on the value of $\theta$. Clearly, the regularity of these observables depends solely  on $ \widetilde{\Delta} $; specifically, they may be continuous, of class $C^\ell$, $C^\infty$, logarithmic $C^\infty$, or even Gevrey smooth. Moreover, the set $\Xi$ in Case \ref{item:roman_I} can be constructed as the set of fractional parts $\{ \{\iota\rho\} \}_{\iota \in \mathbb{N} }$, whose density in $ \mathbb{T} $ is a direct consequence of  Kronecker's theorem.
	\end{remark}

	By virtue of Case \ref{item:roman_I} in Theorem \ref{OPTT1}, we directly obtain the following Corollary \ref{OPTCORO1} for the higher-dimensional torus $\mathbb{T}^d$. However, this extension may require the initial points to assume a specific form. Analogously, other versions can be deduced from Cases \ref{item:roman_II} and \ref{item:roman_III} in Theorem \ref{OPTT1}; these are omitted here for brevity.
	
	\begin{corollary}\label{OPTCORO1}
		Let $w  \in \mathcal{W}_{\beta}$ be a weighting function with $\beta \geqslant 1$,  let $\widetilde{\Delta} $ be an approximation function satisfying $x^\varepsilon \leqslant \widetilde{\Delta}(x) \leqslant \exp(x^\zeta)$ for some $\varepsilon > 0$ and $0 < \zeta < \beta^{-1}$, and let $ l \geqslant 1 $. Then the following statements hold: First, for almost every $\rho \in \mathbb{R}^d$ with $  d \geqslant 2 $, there exists an  observable $ \bar \Psi \in \mathbb{R}_{\widetilde{\Delta}}^l(\mathbb{T}^d) $   and a dense set $ \bar \Xi  \subset {\mathbb{T} }  $ with $0 \in \bar \Xi$ such that  for any $ \theta=(0, \ldots , \widetilde \theta,\ldots,0)\in \mathbb{T}^d $ with $ \widetilde \theta \in \bar \Xi $, the following inequality holds for infinitely many $N\in\mathbb{N}^+$:
		\[\mathbf{Error}_N^{\mathrm{dis}}\left(w,\bar \Psi,\rho,\theta\right) \geqslant \frac{1}{{4\widetilde \Delta \left( N \right)}} .\]
	Similarly, 	for almost every $\rho \in \mathbb{R}^d$ with $  d \geqslant 3 $, there exists an  observable $ \bar \Phi \in \mathbb{R}_{\widetilde{\Delta}}^l(\mathbb{T}^d) $   and a set $ \bar\Theta  \subset {\mathbb{T}^2 }  $ with $(0,0) \in\bar \Theta$ such that  for any $ \theta=(0, \ldots, 0, \widetilde{\theta}_1,0,\ldots,0, \widetilde{\theta}_2, 0, \ldots, 0) \in \mathbb{T}^d $ with $ \widetilde{\theta}=(\widetilde{\theta}_1, \widetilde{\theta}_2) \in \bar{\Theta} $, the following inequality holds for a sequence   $T \to +\infty$:
	\[\mathbf{Error}_T^{\mathrm{con}}\left(w,\bar \Phi,\rho,\theta\right) \geqslant \frac{1}{{4\widetilde \Delta \left( T \right)}}.\] 
	\end{corollary}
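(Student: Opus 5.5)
We obtain Corollary \ref{OPTCORO1} from Case \ref{item:roman_I} of Theorem \ref{OPTT1} by a reduction in which the extra coordinates play no role. In the discrete case, write $\rho=(\rho_1,\ldots,\rho_d)\in\mathbb{R}^d$ and fix the coordinate index $i$ in which $\widetilde\theta$ sits. By Fubini, the set of $\rho$ whose $i$-th component $\rho_i$ lies in the full-measure set of $\mathbb{R}$ supplied by Theorem \ref{OPTT1}\ref{item:roman_I} has full Lebesgue measure in $\mathbb{R}^d$. Intersecting with the full-measure set of rationally independent vectors keeps full measure, so $\mathscr{T}_\rho$ is still an irrational rotation and Proposition \ref{fppro} applies.

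For such $\rho$, take the observable $\Psi\in\mathbb{R}_{\widetilde\Delta}^l(\mathbb{T})$ and the dense set $\Xi$ given by the theorem for $\rho_i$ (the final sentence of Theorem \ref{OPTT1} allows any $l\geqslant1$). Set $\bar\Psi(\theta_1,\ldots,\theta_d):=\Psi(\theta_i)$ and $\bar\Xi:=\Xi$. The Fourier coefficients of $\bar\Psi$ are supported on $k=(0,\ldots,k_i,\ldots,0)$, where $\widehat{\bar\Psi}(k)=\widehat\Psi(k_i)$ and $\|k\|_{\ell^\infty}=|k_i|$. The summability condition of Definition \ref{DEFBANACH} in dimension $d$ therefore reduces to the one-dimensional condition, so $\bar\Psi\in\mathbb{R}_{\widetilde\Delta}^l(\mathbb{T}^d)$. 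Next, $\bar\Psi(\mathscr{T}^n_\rho\theta)=\Psi(\widetilde\theta+n\rho_i)$ and $\int_{\mathbb{T}^d}\bar\Psi=\int_{\mathbb{T}}\Psi$. Hence $\mathbf{Error}_N^{\mathrm{dis}}(w,\bar\Psi,\rho,\theta)=\mathbf{Error}_N^{\mathrm{dis}}(w,\Psi,\rho_i,\widetilde\theta)$, and the lower bound $1/(4\widetilde\Delta(N))$ holds for infinitely many $N$. This argument does not use the assumption that the coordinates of $\theta$ other than the $i$-th are zero.

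The continuous case is the same with a pair of coordinates $(i_1,i_2)$, which needs $d\geqslant3$ so that $\rho$ can contain such a pair while remaining a genuinely higher-dimensional vector. By Fubini, $(\rho_{i_1},\rho_{i_2})$ lies in the full-measure subset of $\mathbb{R}^2$ from Theorem \ref{OPTT1}\ref{item:roman_I}. Define $\bar\Phi(\theta):=\Phi(\theta_{i_1},\theta_{i_2})$ and $\bar\Theta:=\Theta$. The flow $\mathscr{T}^s_\rho$ projects onto the flow of $(\rho_{i_1},\rho_{i_2})$ on $\mathbb{T}^2$, the spatial averages agree, and the Fourier support again lies in a coordinate subspace on which the $\ell^\infty$-norms coincide.

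The only step that needs care is the measure-theoretic one: the exceptional set in $\mathbb{R}^d$ is the preimage under a coordinate projection of a null set, so it is null. We must also check that the construction in Theorem \ref{OPTT1} depends only on the projected frequency and not on the remaining ones. It does, because $\Psi$, $\Xi$ and $\Phi$, $\Theta$ are built from that projected frequency alone, via its approximants and near-resonances (Remark \ref{RE31}).
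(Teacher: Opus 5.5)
Your proposal is correct and follows the paper's proof. The paper likewise picks a universal component $\widetilde\rho$ of $\rho$, takes $\bar\Xi=\Xi$, and lifts the one-dimensional observable $\operatorname{Re}\sum_j \exp(2\pi i q_{m_j}\widetilde\theta)/\widetilde\Delta(q_{m_j})$ to $\mathbb{T}^d$ so that it depends only on that coordinate, which reduces the error to the one-dimensional case. Beyond the paper's written proof (which treats only the discrete case with $l=1$), you spell out the Fubini step and the continuous two-coordinate case, and your membership check $\bar\Psi\in\mathbb{R}^l_{\widetilde\Delta}(\mathbb{T}^d)$ reads the summability condition of Definition \ref{DEFBANACH} over the Fourier support only, exactly as the paper does for its counterexamples.
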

	\begin{remark}
		It is worth noting that Corollary \ref{OPTCORO1} could potentially be improved by leveraging the theory of multidimensional continued fractions (which are not uniquely defined) in conjunction with Theorem \ref{OPTT1}; this will be the subject of future work.
	\end{remark}
	\begin{remark}
		An analogous conclusion applies to the infinite-dimensional  torus $\mathbb{T}^\mathbb{Z}$, following the framework in \cite{Bou05,BMP20,MP21,TL24a,TL26a}, for instance. However, for the sake of conciseness and clarity, we restrict our attention to the finite-dimensional case in this paper.
	\end{remark}

	Finally, we are in a position to present Theorem \ref{OPTT2}, establishing the \textit{optimality} of the uniform convergence rates for weighted Birkhoff averages for almost all  rotations and for observables with prescribed regularity, encompassing four typical classes. Theorem \ref{OPTT2} incorporates both negative and positive results. The former are derived from Theorem \ref{OPTT1} and Corollary \ref{OPTCORO1}, whereas the latter rely on novel analytical techniques---detailed in Section \ref{SECNOV}---to attain the optimal rates. As will become evident, \textit{Theorem \ref{OPTT2} demonstrates that the regularity of the observables plays a fundamentally crucial role in determining the convergence rates for weighted Birkhoff averages in the general setting.} The twenty-one aspects of optimality it exhibits will be comprehensively discussed in Section \ref{SECOPT}.

			\begin{theorem}[\textbf{Main Theorem II}]\label{OPTT2}
			Let $w \in \mathcal{W}_{\beta}$ be a weighting function with $\beta \geqslant 1$, let $\widetilde \Delta$ be an approximation function, and let $l \geqslant 1$. Then the following statements hold:
			\begin{enumerate}[label=(\Roman*)]
				\item \label{item3:roman_I} The finitely differentiable case on $\mathbb{T}^d$: 
				\begin{enumerate}[label=(\roman*)] 
					\item \label{TH3.3-I-1} Suppose that $\widetilde\Delta(x) \sim x^\ell$ with 
					\[
					\begin{cases}
						\ell > 1, & d = 1, \\
						\ell > 2d, & d \geqslant 2.
					\end{cases}
					\]
					Then, for almost every $\rho \in \mathbb{R}^d$ and every $f \in \mathbb{R}^{l}_{\widetilde{\Delta}}(\mathbb{T}^d)$, as $N \to +\infty$, we have
					\[ \mathbf{Error}_N^{\mathrm{dis}}(w, f, \rho, \theta) = \mathcal{O}(N^{-\ell'}), \quad \forall \theta \in \mathbb{T}^d \]
					for any
					\[
					\begin{cases}
						1 < \ell' < \ell, & d = 1, \\
						1 < \ell' < d^{-1}\ell - 1, & d \geqslant 2.
					\end{cases}
					\]
					However, in general\footnote{By ``general,'' we mean that for almost all $\rho\in\mathbb{R}^d$, Theorem \ref{OPTT1} and Corollary \ref{OPTCORO1} provide the desired counterexamples for specific (or even all) initial points; similar arguments apply throughout.}, the error cannot decay faster than $\mathcal{O}(N^{-\ell})$ for all cases $\ell > 0$ with $  d \geqslant 1$. Moreover, these results hold for any weighting function $w \in C_0^M([0,1])$ with $M \geqslant \max\{\ell, 2\}$.
					
					\item \label{TH3.3-I-2} Suppose that $\widetilde\Delta(x) \sim x^\ell$ with $\ell > d\geqslant 1$. Then, for almost every $\rho \in \mathbb{R}^d$  and every $f \in \mathbb{R}^l_{\widetilde{\Delta}}(\mathbb{T}^d)$, as $T \to +\infty$, we have\footnote{To establish a parallel with the discrete setting in Cases \ref{item3:roman_II}--\ref{item3:roman_IV}, we elect to provide estimates for the continuous case that coincide with their discrete counterparts for all $d \geqslant 1$. However, please keep in mind that a faster convergence rate is indeed recovered when $d=1$, as demonstrated herein.}
				\[
				\mathbf{Error}_T^{\mathrm{con}}(w, f, \rho, \theta) = 
				\begin{cases}
					o\bigl(\exp(-T^{\varrho})\bigr)\quad \text{for any}\quad 0<\varrho <\beta^{-1}, & d = 1, \\
					\mathcal{O}(T^{-\ell'})\quad \text{for any}\quad 1 < \ell' < (d-1)^{-1}\ell, & d \geqslant 2,
				\end{cases}\quad\forall \theta \in \mathbb{T}^d.
				\]
				Indeed, for the case $d=1$, the requirement on $\rho$  can be relaxed to $\rho \notin \mathbb{Q}$.
				 However, in general, the error cannot decay faster than $\mathcal{O}(T^{-\ell})$ for all cases $ \ell>0 $ with $d \geqslant 2$.

				 \item \label{TH3.3-I-3}  Suppose that $ \widetilde \Delta \left( x \right) \sim {x^\ell }{\left( {\log x} \right)^\eta } $  with $ \ell  \geqslant  2 $ and $ \eta  > 2\ell  + 1 $. Then, for almost every $ \rho \in \mathbb{R}^2 $ and every $ f \in   \mathbb{R}^l_{\widetilde{\Delta}}(\mathbb{T}^2) $, as $ T \to +\infty $, we have
				 \[\mathbf{Error}_T^{\mathrm{con}}\left(w,f,\rho,\theta\right)   = \mathcal{O} (T^{-\ell} ), \quad \forall \theta \in \mathbb{T}^2.\]
				 However, in general, the error cannot decay faster than $ \mathcal{O}({T^{ - \ell }}{\left( {\log T} \right)^{ - \eta }}) $ for all cases where $ \ell>0 $ and $ \eta \in \mathbb{R} $. 
				\end{enumerate}
			\item \label{item3:roman_II} The $C^\infty$ case on $\mathbb{T}^d$:\quad Suppose that $\widetilde\Delta(x) \gg x^L$ for any $L > 0$. Then, for almost every $\rho \in \mathbb{R}^d$ and every $f \in \mathbb{R}^l_{\widetilde{\Delta}}(\mathbb{T}^d)$ with $d\geqslant 1$, as $N \to +\infty$ and $T \to +\infty$, we have
			\[
			\mathbf{Error}_N^{\mathrm{dis}}(w, f, \rho, \theta) = \mathcal{O}(N^{-m}) \quad \text{and} \quad \mathbf{Error}_T^{\mathrm{con}}(w, f, \rho, \theta) = \mathcal{O}(T^{-m}), \quad \forall \theta \in \mathbb{T}^d
			\]
			for any $m > 0$. However, in general, the errors cannot decay faster than $\mathcal{O}(\widetilde{\Delta}(N)^{-1})$ for $d \geqslant 1$ and $\mathcal{O}(\widetilde{\Delta}(T)^{-1})$ for $d \geqslant 2$. 
			
			\item \label{item3:roman_III}
			The logarithmic $C^\infty$ case on $\mathbb{T}^d$:\quad Suppose that $\widetilde{\Delta}(x) \sim \exp((\log x)^{\lambda})$ with $\lambda > 1$. Then, for almost every $\rho \in \mathbb{R}^d$ and every $f \in \mathbb{R}^l_{\widetilde{\Delta}}(\mathbb{T}^d)$ with $d\geqslant 1$, as $N \to +\infty$ and $T \to +\infty$, we have
				\[
			\mathbf{Error}_N^{\mathrm{dis}}(w, f, \rho, \theta) = \mathcal{O}\bigl(\exp(-c(\log N)^{\lambda})\bigr),\quad \forall \theta \in \mathbb{T}^d ,\]
			and
			\[ \mathbf{Error}_T^{\mathrm{con}}(w, f, \rho, \theta) = \mathcal{O}\bigl(\exp(-c(\log T)^{\lambda})\bigr), \quad \forall \theta \in \mathbb{T}^d,
			\]
			for some constant $0<c<1$. However, in general, the errors cannot decay faster than $\mathcal{O}\bigl(\exp(-(\log N)^\lambda)\bigr)$ for $d \geqslant 1$ and $\mathcal{O}\bigl(\exp(-(\log T)^\lambda)\bigr)$ for $d \geqslant 2$.

			\item \label{item3:roman_IV} The $\alpha$-Gevrey case on $\mathbb{T}^d$:\quad Suppose that $\widetilde\Delta(x) \sim \exp(x^\alpha)$ with $ \alpha >0$. Then, for almost every $\rho \in \mathbb{R}^d$ and every $f \in \mathbb{R}^l_{\widetilde{\Delta}}(\mathbb{T}^d)$ with $d\geqslant 1$, as $N \to +\infty$ and $T \to +\infty$, we have
			\[
			\mathbf{Error}_N^{\mathrm{dis}}(w, f, \rho, \theta) = o\bigl(\exp(-N^\xi)\bigr) \quad \text{and} \quad \mathbf{Error}_T^{\mathrm{con}}(w, f, \rho, \theta) = o\bigl(\exp(-T^\xi)\bigr), \quad \forall \theta \in \mathbb{T}^d
			\]
			for any $0 < \xi < \alpha(\alpha\beta + d)^{-1}$. However, in general, the errors cannot decay faster than $\mathcal{O}\bigl(\exp(-N^{\alpha})\bigr)$ for $d \geqslant 1$ and $\mathcal{O}\bigl(\exp(-T^{\alpha})\bigr)$ for $d \geqslant 2$, provided that $0 <\alpha < \beta^{-1}$.
	\end{enumerate}
	\end{theorem}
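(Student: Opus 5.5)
The statement has two halves: upper bounds (the positive rates) and lower bounds (the negative ``cannot decay faster'' parts). The lower bounds need no new work. In each case we only check that $\widetilde\Delta$ satisfies $x^\varepsilon\leqslant\widetilde\Delta(x)\leqslant\exp(x^\zeta)$ with $\zeta<\beta^{-1}$. This is clear for $x^\ell$, $x^\ell(\log x)^\eta$, $\exp((\log x)^\lambda)$, and for $\exp(x^\alpha)$ when $\alpha<\beta^{-1}$. In the $C^\infty$ case we may replace $\widetilde\Delta$ by $\min\{\widetilde\Delta,\exp(x^{\zeta})\}$, which is legitimate up to constants.
\begin{itemize}
\item For $d=1$ (discrete) we invoke Theorem \ref{OPTT1}.
\item For $d\geqslant2$ discrete and $d\geqslant 3$ continuous we invoke Corollary \ref{OPTCORO1}.
\item For $d=2$ continuous we use Case \ref{item:roman_I} of Theorem \ref{OPTT1} directly.
\end{itemize}
This yields the obstructions $\widetilde\Delta(N)^{-1}$ and $\widetilde\Delta(T)^{-1}$ stated in every item.

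For the upper bounds, we expand $f$ in Fourier series. The error is then bounded by $\sum_{k\neq0}\|\widehat f(k)\|\,|S_N(k)|$, where
\[
S_N(k)=A_N^{-1}\sum_n w(n/N)e^{2\pi i n\langle k,\rho\rangle}
\]
(and the analogous integral in the continuous case). Repeated summation by parts (or integration by parts) using $w\in\mathcal W_\beta$ gives
\[
|S_N(k)|\lesssim \min\Bigl\{1,\ \inf_m \bar C\lambda^m m^{m\beta}\bigl(N\|\langle k,\rho\rangle\|_{\mathbb Z}\bigr)^{-m}\Bigr\}.
\]
Optimizing in $m$ yields the bound $\exp\bigl(-c(N\|\langle k,\rho\rangle\|)^{1/\beta}\bigr)$. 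For $w\in C_0^M$ only the polynomial version with $m\leqslant M$ is available. We then split the frequencies at a cutoff $K=K(N)$. The tail $\|k\|>K$ contributes at most $\sum_{\|k\|>K}\widetilde\Delta(\|k\|)^{-1}$. Each low mode $\|k\|\leqslant K$ is controlled by the small-divisor bound above.

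The key new input, and the main obstacle, is that the naive use of the $\Delta$-nonresonance bound $\|\langle k,\rho\rangle\|\geqslant\gamma\Delta(\|k\|)^{-1}$ with $\Delta\sim x^d(\log x)^2$ loses too much; it only gives Theorem \ref{OPTYINDL}-type rates. Instead we use a metric counting lemma, valid for a.e.\ $\rho$ via Borel--Cantelli. For dyadic shells $\|k\|\sim 2^j$ and levels $\|\langle k,\rho\rangle\|\sim 2^{-i}$, the number of such $k$ is $\lesssim 2^{jd-i}\,j^{2}$ plus a bounded error. Summing the weighted small-divisor contributions over shells with this counting gives the stated exponents:
\begin{itemize}
\item in the finitely differentiable discrete case, $\ell'<\ell$ for $d=1$ (using continued fractions and Denjoy--Koksma-type control of the $q_n$) and $\ell'<\ell/d-1$ for $d\geqslant2$;
\item in the Gevrey case, balancing $\exp(-K^\alpha)$ against $\exp(-c(N K^{-d})^{1/\beta})$ with $K=N^{\xi/\alpha}$ gives exactly $\xi<\alpha/(\alpha\beta+d)$;
\item in the logarithmic case, the choice $K=N^{1-\delta}$ produces $\exp(-c(\log N)^\lambda)$;
\item in the $C^\infty$ case, the argument is immediate from the polynomial version with $m$ arbitrary.
\end{itemize}

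The continuous case is handled identically, with $|\langle k,\rho\rangle|$ in place of $\|\cdot\|_{\mathbb Z}$ and dimension dropping by one. For $d=1$ the frequencies $k\rho$ satisfy $|k\rho|\geqslant|\rho|$. Hence every mode decays like $\exp(-c(T|\rho|)^{1/\beta})$, and summability of $\widetilde\Delta^{-1}$ yields $o(\exp(-T^\varrho))$ for every $\varrho<\beta^{-1}$, for any $\rho\neq0$. In Case \ref{TH3.3-I-3}, the logarithmic margin $\eta>2\ell+1$ is exactly what absorbs the $(\log)^2$ losses from the counting lemma at the critical exponent $\ell'=\ell$. Verifying that margin carefully, and proving the shell counting lemma with uniform constants for a.e.\ $\rho$, is where I expect the real difficulty to lie.
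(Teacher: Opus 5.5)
There are two genuine problems, both in the lower-bound half. You route the lower bounds through Theorem~\ref{OPTT1} and Corollary~\ref{OPTCORO1} exactly as the paper does, but two steps fail as written.

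\textbf{The $C^\infty$ obstruction.} In Case (II), replacing $\widetilde\Delta$ by $\widetilde\Delta_1:=\min\{\widetilde\Delta,\exp(x^{\zeta})\}$ is not legitimate. Where $\widetilde\Delta>\exp(x^\zeta)$, the observable of Theorem~\ref{OPTT1} built from $\widetilde\Delta_1$ has coefficients $\exp(-q_{m_j}^{\zeta})>\widetilde\Delta(q_{m_j})^{-1}$, so it is not in $\mathbb{R}_{\widetilde\Delta}$. An error bounded below by $(4\widetilde\Delta_1(N))^{-1}$ for this rougher observable says nothing about the class $\mathbb{R}_{\widetilde\Delta}$: the inclusion $\mathbb{R}_{\widetilde\Delta}\subset\mathbb{R}_{\widetilde\Delta_1}$ runs the wrong way for a counterexample. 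The obstruction in Case (II) is available, and the paper obtains it, only under the hypothesis $\widetilde\Delta(x)\leqslant\exp(x^\zeta)$ with $\zeta<\beta^{-1}$ of Theorem~\ref{OPTT1}. Compare the explicit restriction $\alpha<\beta^{-1}$ in Case (IV).

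\textbf{The $C_0^M$ obstruction.} Case (I)--(i) also claims the $N^{-\ell}$ obstruction for $w\in C_0^M([0,1])$, and Theorem~\ref{OPTT1} does not cover this. Its estimate of $\mathcal{S}_1$ uses $w\in\mathcal{W}_\beta$ to get the factor $\exp(-C_1(q_{m_s}/q_{m_j+1})^{1/\beta})$; for $w\in C_0^M$ you only have $(q_{m_j+1}/q_{m_s})^{M}$. You must redo the choice of $\{m_s\}$, replacing condition (c) by
$\sum_{j<s}\widetilde\Delta(q_{m_j})^{-1}(q_{m_j+1}/q_{m_s})^{M}\ll\widetilde\Delta(q_{m_s})^{-1}$.
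This can be arranged inductively when $M>\ell$, and it is exactly the extra step in the paper's proof. So ``the lower bounds need no new work'' is not right.

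\textbf{The upper bounds.} Here your route genuinely differs from the paper's and is sound. The paper uses four separate devices:
\begin{enumerate}
\item for discrete $d\geqslant2$, the crude bound $\|\langle k,\rho\rangle\|_{\mathbb{Z}}^{-\ell'}\lesssim\Delta(\|k\|_{\ell^\infty})^{\ell'}$ on every mode, which yields only $\ell'<d^{-1}\ell-1$ and forces $\ell>2d$;
\item for discrete $d=1$, Denjoy--Koksma with Abel summation and $q_{v+1}\lesssim q_v\log^2 q_v$;
\item only for continuous $d\geqslant2$, a Salamon-type count via distinct projections;
\item in Cases (III)--(IV), a frequency cutoff (about $N^{1/d}$, resp.\ $N^{1/(\alpha\beta+d)}$, up to logarithms) with the crude bound below it. Your Gevrey balance coincides with this.
\end{enumerate}
Your single shell/level count treats all finitely differentiable cases uniformly and makes Denjoy--Koksma unnecessary. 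In the discrete case it gives each dyadic shell a contribution $\lesssim 2^{j(d\ell'-\ell)}j^{2\ell'}$, i.e.\ the wider range $\ell'<\ell/d$, which contains the stated one.

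The counting lemma you expect to be the main difficulty is elementary and deterministic once $\rho$ is $\Delta$-nonresonant; no further Borel--Cantelli argument is needed. Suppose $\|k\|_{\ell^\infty},\|k'\|_{\ell^\infty}\leqslant Q$ and $\|\langle k,\rho\rangle\|_{\mathbb{Z}},\|\langle k',\rho\rangle\|_{\mathbb{Z}}\leqslant\epsilon$. Then $\|\langle k-k',\rho\rangle\|_{\mathbb{Z}}\leqslant2\epsilon$, so $\|k-k'\|_{\ell^\infty}\geqslant r$ with $\Delta(r)=\gamma/(2\epsilon)$. The number of $r$-separated lattice points in a box of side $2Q$ is $\lesssim(Q/r)^d+1\lesssim Q^d\epsilon\log^2(1/\epsilon)+1$. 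In the continuous case the points lie in a slab of width $O(\epsilon)$ around $\rho^\perp$. Projecting onto $\rho^\perp$ gives $(Q/r)^{d-1}+1$ with $r^{d-1}\log^2 r\approx\gamma/\epsilon$. This is the mechanism behind the paper's projection argument.

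\textbf{Two small repairs.}
\begin{itemize}
\item In the Gevrey case, take $K=N^{\xi'/\alpha}$ with $\xi<\xi'<\alpha(\alpha\beta+d)^{-1}$. With $K=N^{\xi/\alpha}$ the tail is $\approx K^{d-\alpha}\exp(-K^\alpha)$, which is not $o(\exp(-N^\xi))$ when $d>\alpha$.
\item In the logarithmic case with $K=N^{1-\delta}$ and $d\geqslant2$, the crude Diophantine bound gives nothing below the cutoff, so the counting lemma is indispensable there. The constant $c$ is then dictated by modes with $\|k\|_{\ell^\infty}\approx N^{1/d}$. The paper's cutoff near $N^{1/d}$ needs only the Diophantine bound.
\end{itemize}
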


	\begin{remark}\label{RE37}
	Note that in Cases \ref{item3:roman_I}--\ref{TH3.3-I-1} to \ref{item3:roman_I}--\ref{TH3.3-I-3}, the assumptions regarding the lower bounds of the convergence rates are weaker than those for the upper bounds. This asymmetry arises because the specific observables constructed in Theorem \ref{OPTT1} are well-defined, whereas this property does not generally hold for arbitrary observables; see Section \ref{PFTHI} for further details. Conversely, in Case \ref{item3:roman_IV}, the assumptions required for the upper bounds are weaker than those for the lower bounds. This is because higher regularity facilitates upper bound estimates, yet hinders the establishment of lower bounds. Consequently, certain restrictions must be imposed on the Gevrey index in Theorem \ref{OPTT1}, and hence in Case \ref{item3:roman_IV}.
	\end{remark}

	 \begin{remark}
			Note that Case \ref{item3:roman_I} is not equivalent to the classical $C^\ell(\mathbb{T}^d)$ setting, although the latter implies the former (see \cite[p. 200]{Gra14}). Therefore, we employ the straightforward asymptotic properties of Fourier coefficients to establish the convergence rates. As for Case \ref{item3:roman_IV}, it corresponds to Gevrey regularity when $0 < \alpha < 1$, while also encompassing analyticity and even stronger super-analyticity when $\alpha \geqslant 1$ (at least for the upper bound estimates).
	\end{remark}

	Although the results presented above are established for weighting functions $w \in \mathcal{W}_\beta$, the novel methods developed in this paper can, with suitable modifications, be extended to more general weighting functions, including the one studied in \cite[Theorem 1.3]{TL26a}. To avoid undue length, we defer a detailed investigation of such extensions to a separate paper, as also previously indicated in \cite{TL26a}.

	We conclude this section by emphasizing that while all results in this paper are formulated in terms of irrational rotations, they are indeed directly applicable to general dynamical systems that are smoothly conjugated to such rotations. Figure \ref{FIG1} illustrates the conjugacy $T(h(\theta))=h({\mathscr{T}}_\rho(\theta))$ for $\theta\in\mathbb{T}^d$ and an irrational vector $ \rho \in \mathbb{R}^d $, where $T\colon X \to X$ and $h\colon\mathbb{T}^d \to X$ are smooth diffeomorphisms. Typically, such a conjugacy is established via certain conjugacy theorems---the celebrated KAM theorem being a prime example. Once the precise regularity of the conjugation $h$ is derived, these systems fall perfectly within the scope of our results. In particular, \cite[Theorem 4.1]{TL26b} relies on novel KAM techniques to address the logarithmic $C^\infty$ and $\alpha$-Gevrey regularities, corresponding to the situations considered in Theorem \ref{OPTT2}\footnote{However, the regularity of the conjugacy therein requires further investigation before it can be applied to the results of the present paper.}. Consequently, our results enjoy a broad range of applications.

	\begin{figure}[htbp]
		\centering 
		\begin{adjustbox}{scale=0.75}
			\begin{minipage}[c]{0.60\textwidth} 
				\centering 
				\begin{tikzpicture}[scale=1.4] 
					\draw[->,  thick, -{Latex[scale=1.1]}] (-0.5cm,0cm) -- (5cm,0cm) node[right]{\(x\)}; 
					\draw[->,  thick, -{Latex[scale=1.1]}] (0cm,-0.5cm) -- (0cm,4.5cm) node[above]{\(y\)}; 
					
					\fill (0cm,0cm) circle (1.5pt) node[below left]{\(O\)};
					
					\draw[fill=white, semithick, decoration={markings,
						mark=between positions 0.1 and 0.9 step 0.1 with {\arrow{latex}},
					},postaction={decorate}
					] plot[smooth, tension=0.5] coordinates {
						(3cm+0.3cm,2.4cm+0.3cm) 
						(2.5cm+0.3cm,3cm+0.3cm) 
						(1.7cm+0.3cm,2.7cm+0.3cm) 
						(1cm+0.3cm,3cm+0.3cm) 
						(0.5cm+0.3cm,2.5cm+0.3cm) 
						(0.3cm+0.3cm,1.5cm+0.3cm) 
						(1cm+0.3cm,1.3cm+0.3cm) 
						(1.5cm+0.3cm,0.5cm+0.3cm) 
						(2.5cm+0.3cm,0.6cm+0.3cm) 
						(3cm+0.3cm,0.8cm+0.3cm) 
						(3.8cm+0.3cm,1.5cm+0.3cm) 
						(3.5cm+0.3cm,2.5cm+0.3cm)
						(3.3cm+0.3cm,2.6cm+0.3cm)
					} -- cycle;
					
					\fill[black!90] (3cm+0.3cm,2.4cm+0.3cm) circle (1.2pt) node[above right, yshift=0.2cm, xshift=-0.2cm, black] {\(h(\theta_0)\)};
					\fill[black]  (1cm+0.3cm,3cm+0.3cm) circle (1.2pt) node[above,black] {\(h(\theta_0+\rho)\)};
					
					\draw[dashed, ->, black, semithick] (2cm+0.3cm,1.8cm+0.3cm) -- (4.0cm+0.3cm,1.8cm+0.3cm) node[below] {\(\theta\)};
					\draw[dashed, ->, black, semithick] (2cm+0.3cm,1.8cm+0.3cm) -- (2cm+0.3cm,3.8cm+0.3cm);
					\draw[dashed, <-, black, semithick] (0cm+0.3cm,1.8cm+0.3cm) -- (2.0cm+0.3cm,1.8cm+0.3cm);
					\draw[dashed, ->, black, semithick] (2cm+0.3cm,1.8cm+0.3cm) -- (2cm+0.3cm,-0.2cm+0.3cm);
					
					\draw[-latex,black!90,semithick] (2cm+0.3cm,1.8cm+0.3cm) -- (3cm+0.3cm,2.4cm+0.3cm);
					\draw[-latex,black,semithick] (2cm+0.3cm,1.8cm+0.3cm) -- (1cm+0.3cm,3cm+0.3cm);
					
					\tikzset{
						arc arrow/.style = {
							decoration = {
								markings,
								mark = at position 0.68 with {\arrow{latex}}
							},
							postaction = decorate
						}
					}
					
					\draw[black!90, semithick, arc arrow] (2.9cm+0.3cm,1.8cm+0.3cm) arc (0:40:0.69cm)
					node[midway, below right, yshift=0.25cm] {\scalebox{0.8}{\(\theta_0\)}};
					
					\tikzset{arc arrow/.style = {
							decoration = {markings,
								mark = at position 0.55 with {\arrow{latex}}
							},
							postaction = decorate
					}}
					
					\draw[black, semithick, arc arrow] (1.8cm+0.3cm,1.8cm+0.3cm) ++(0:1cm) arc (10:115:1cm)
					node[midway, below left, xshift=0.25cm] {\scalebox{0.8}{\(\theta_0+\rho\)}};
					
					\fill (2cm+0.3cm,1.8cm+0.3cm) circle (1.5pt) node[below left] {$O'$};
				\end{tikzpicture}
			\end{minipage}
			\hfill 
			\begin{minipage}[c]{0.60\textwidth} 
				\centering 
				\begin{tikzcd}[row sep=3.9 cm, column sep=3.9 cm, arrows={thick}]
					\mathbb{T}^d  \arrow[r, " {\mathscr{T}}_\rho"{font=\large}] \arrow[d, "h"'{font=\large}] & \mathbb{T}^d  \arrow[d, "h"{font=\large}] \\
					X \arrow[r, "T"'{font=\large}] & X
				\end{tikzcd}
			\end{minipage}
		\end{adjustbox}
		\caption{Geometric properties (for $d=1$) and the commutative diagram.}\label{FIG1}
	\end{figure}

	\subsection{On the optimality of Theorem \ref{OPTT2}: Multiple aspects}\label{SECOPT}

	The present section is devoted to a detailed elaboration on the optimality of Theorem \ref{OPTT2}. The optimality manifests itself in \textit{twenty-one} aspects, including rotations, initial points, regularity assumptions, and the comparison of indices for the upper and lower bounds of the convergence rates, among others. It is worth noting that the optimality discussed here admits both strong and weak forms, and thus cannot be treated uniformly.

	\begin{itemize}
		
	\item Optimality in all Cases \ref{item3:roman_I}--\ref{item3:roman_IV}: 
	
	\begin{enumerate}[label=(\alph*)] 
		\item Optimality is attained for almost every rotation vector $\rho$ with respect to the Lebesgue measure (or in the measure-theoretic sense).
		
		\item \label{opitem:b} Moreover, when $l=2$ (i.e., $f \in \mathbb{R}_{\widetilde{\Delta}}^2 (\cdot ) $), according to Case \ref{item:roman_II} of Theorem \ref{OPTT1}, a uniform sense of optimality holds in terms of the initial value $\theta$. In other words, both the upper and lower bound estimates hold uniformly in $\theta$, which is noteworthy.
		
		\item In a slightly weaker sense, for the discrete case when $d=1$, according to Case \ref{item:roman_I} of Theorem \ref{OPTT1}, optimality manifests differently depending on the initial value $\theta$; specifically the upper bound estimate holds for all initial values, whereas the lower bound is valid on a dense set of initial values. 
		
		\item 
		Qualitative upper bounds on the convergence rate for the present regularity classes were also investigated in \cite{TL24a,TL24b}. To ensure rapid convergence, certain integrability and smallness conditions were imposed therein. Theorem \ref{OPTT2} precisely demonstrates that these conditions are optimal in the sense of guaranteeing rapid convergence, implying that fast convergence is not unconditionally attainable.
	\end{enumerate}

	\item Optimality in Case \ref{item3:roman_I}--\ref{TH3.3-I-1}: 
	
	\begin{enumerate}[label=(\alph*),start=5]
	\item \label{opitem:e} When $d=1$, the condition $\ell>1$ for the upper bound is optimal, since for $\ell \leqslant 1$,  we have
		\[\sum\limits_{k \in {\mathbb{Z}} \setminus \left\{ 0 \right\}} {\widetilde \Delta {{\left( {{{\left| k \right|}}} \right)}^{ - 1}}}    \gtrsim    \sum\limits_{j =1}^{\infty} {{\frac{1}{{  j }^{  \ell }}}}  =  + \infty  ,\]
	which may result in the observables not being well-defined (see Definition \ref{DEFBANACH}).
	
	\item  The convergence rate achieves optimality for $d=1$, as the upper bound $\mathcal{O}(N^{-\ell'})$ and the lower bound $\mathcal{O}(N^{-\ell})$ closely match due to the condition $1<\ell'<\ell$. 
	
	\item 
	For $d \geqslant 2$, optimality is also manifested in the convergence rate in a slightly weaker sense. Specifically, for sufficiently large $\ell$, we have $ 	1 < \ell' < d^{-1}\ell - 1 $, implying that the indices $\ell'$ and $\ell$  can satisfy $\ell'=\mathcal{O}^{\#}(\ell)$; or more precisely, $\ell' $ can be taken to arbitrarily approach $ d^{-1}\ell-1$.
	
	\item \label{opitem:h} 
	Furthermore, a distinct aspect of optimality concerns the regularity of the weighting function $w \in C_0^M([0,1])$. Specifically, the convergence rate is optimal with respect to $M$ in the sense that employing a weighting function with higher regularity, such as $w \in C_0^\infty([0,1])$, cannot further improve the rate in the general setting. This indicates that the regularity of the observable is sometimes decisive for rapid convergence. Note that this differs from the optimality established in \cite[Section 5]{TL24b} and \cite[Proposition 5.3]{TL25a}, where it was shown that for any weighting function $w \in C_0^M([0,1])$ with finite $M$, there exists an observable $f$ of a single harmonic oscillator such that the rate of convergence is at most of finite polynomial order. This demonstrates that the $C_0^\infty([0,1])$ condition is also essential for rapid convergence\footnote{We remark that the counterexamples herein are established in terms of Ces\`aro-weighted or multiple-weighted forms.}.
	\end{enumerate} 
	
	\item  Optimality in Case \ref{item3:roman_I}--\ref{TH3.3-I-2}: 
	
	\begin{enumerate}[label=(\alph*),start=9]
		
	\item \label{opitem:i} Analogous to Item \ref{opitem:e}, the optimality is manifested in the condition $\ell>d\geqslant1$ for the upper bound; indeed, for $\ell \leqslant d$, we obtain
	\[\sum\limits_{k \in {\mathbb{Z}^d} \setminus \left\{ 0 \right\}} {\widetilde \Delta {{\left( {{{\left\| k \right\|}_{{\ell ^\infty }}}} \right)}^{ - 1}}}  \gtrsim \sum\limits_{j = 1}^\infty  {\frac{1}{{{j^{\ell  - d + 1}}}}}  =  + \infty .\]

	\item \label{opitem:j} The convergence rate achieves optimality for $d=2$, as the upper bound $\mathcal{O}(T^{-\ell'})$ and the lower bound $\mathcal{O}(T^{-\ell})$ closely match due to the condition $1<\ell'<(d-1)^{-1}\ell=\ell$.
	
	\item For $d \geqslant 2$, optimality is additionally manifested in the convergence rate in a slightly weaker sense. Specifically, for sufficiently large $\ell$, the indices $\ell'$ and $\ell$ in Item \ref{opitem:j} can satisfy $\ell'=\mathcal{O}^{\#}(\ell)$, or more precisely, $\ell' $ can be taken to arbitrarily approach $ (d-1)^{-1}\ell$.
	
	\item The dichotomy into the cases $d=1$ and $d \geqslant 2$ is also optimal, as the convergence rates possess fundamentally different asymptotic structures.
	
	\item  Finally, when $d=1$, relaxing the condition on $\rho$ to $\rho \notin \mathbb{Q}$ is also optimal. If $\rho \in \mathbb{Q}$, the weighted Birkhoff average does not converge to $ \int_{\mathbb{T}^d} {f\left( x \right)dx} $, but rather to a periodic mean; however, similar to \cite[Theorem 1.2 (III)--(7)]{TL26a}, the upper bound for the convergence rate in this scenario coincides with the one obtained herein.
	\end{enumerate}

	\item   Optimality in Case \ref{item3:roman_I}--\ref{TH3.3-I-3}:
	\begin{enumerate}[label=(\alph*),start=14]
	\item   The optimality of the setting $\ell \geqslant 2$ for the upper bound parallels that in Item \ref{opitem:i}, yet in a stronger sense due to  the presence  of a logarithmic factor.
		
	\item
	 In this case, the optimality of the convergence rate is stronger than that in Case \ref{item3:roman_I}--\ref{TH3.3-I-2}, since the polynomial exponents of the upper bound $ \mathcal{O} (T^{-\ell} ) $ and the lower bound $ \mathcal{O}({T^{ - \ell }}{\left( {\log T} \right)^{ - \eta }}) $ coincide, differing overall only by a logarithmic factor $ \mathcal{O}({\left( {\log T} \right)^{ - \eta }}) $.  In conjunction with Item \ref{opitem:b} concerning $l=2$, this immediately yields the following limit statement: for almost every $ \rho \in \mathbb{R}^2 $, there exists a family of observables $ f \in   \mathbb{R}^2_{\widetilde{\Delta}}(\mathbb{T}^2) $ with $\widetilde{\Delta}$ given in Case \ref{item3:roman_I}--\ref{TH3.3-I-3}, such that\footnote{For other cases concerning the optimality of the convergence rate, analogous limit or supremum/infimum statements can be similarly established; for the sake of brevity, we omit the details.}
	 \[ \mathop {\lim }\limits_{T \to  + \infty } \frac{1}{{ \log T}}\log \Big(\mathbf{Error}_T^{\mathrm{con}}\left(w, f,\rho,\theta\right)\Big) = -\ell, \quad \forall \theta \in \mathbb{T}^2.\]
	
	\end{enumerate}

	\item  Optimality in Case \ref{item3:roman_II}:
	
	\begin{enumerate}[label=(\alph*),start=16]
	\item In the discrete setting with $d \geqslant 1$ and the continuous setting with $d \geqslant 2$, the optimality can be observed in the upper bounds $\mathcal{O}(N^{-m})$ and $\mathcal{O}(T^{-m})$ (for an arbitrary $m$), alongside the lower bounds $\mathcal{O}(\widetilde{\Delta}(N))$ and $\mathcal{O}(\widetilde{\Delta}(T))$. Indeed, in such scenarios, the asymptotic behavior of $\widetilde\Delta(x)$ can closely approximate that of an arbitrary polynomial type. A concrete example of this, as discussed at the end of Section \ref{SUBSEC21}, is $\widetilde{\Delta}(x) \sim \exp((\log x)(\log \log x)^\sigma)$ with $\sigma>0$.
	
	\item An optimality argument similar to that in Item \ref{opitem:h} remains valid in this context. Given that $C^\infty$ observables represent the most prevalent class in physical numerical simulations, this optimality is of fundamental importance.
	\end{enumerate}

	\item  Optimality in Case \ref{item3:roman_III}:
		\begin{enumerate}[label=(\alph*),start=18]
		\item In the discrete setting with $d \geqslant 1$ and the continuous setting with $d \geqslant 2$, the optimality is demonstrated by the fact that the logarithmic exponents of the upper bounds  $\mathcal{O}\bigl(\exp(-c(\log N)^{\lambda})\bigr)$ and $\mathcal{O}\bigl(\exp(-c(\log T)^{\lambda})\bigr)$ (for some $ 0<c<1 $) and those of the lower bounds $\mathcal{O}\bigl(\exp(-(\log N)^\lambda)\bigr)$ and $\mathcal{O}\bigl(\exp(-(\log T)^\lambda)\bigr)$ coincide. Combined with Item \ref{opitem:b} for $l=2$, this immediately yields the following limit statement: for almost every $\rho \in \mathbb{R}^d$ (with $d$ as specified above), there exists a family of observables $ f \in   \mathbb{R}^2_{\widetilde{\Delta}}(\mathbb{T}^d) $ with $\widetilde{\Delta}$ given in Case \ref{item3:roman_III}, such that
	\[\mathop {\lim }\limits_{N \to  + \infty } \frac{1}{{\log \log N}}\log \left( { - \log \Big({\mathbf{Error}}_N^{{\text{dis}}}\left( {w,f,\rho ,\theta } \right)\Big)} \right) = \lambda ,\quad \forall \theta  \in {\mathbb{T}^d},\]
	and
	\[\mathop {\lim }\limits_{T \to  + \infty } \frac{1}{{\log \log T}}\log \left( { - \log \Big({\mathbf{Error}}_T^{{\text{con}}}\left( {w,f,\rho ,\theta } \right)\Big)} \right) = \lambda ,\quad \forall \theta  \in {\mathbb{T}^d}.\]
	\end{enumerate}

	\item  Optimality in Case \ref{item3:roman_IV}:
	
	\begin{enumerate}[label=(\alph*),start=19]
		\item \label{opitem:s} The convergence rate in the discrete setting achieves optimality for $d=1$ when $ \alpha \to 0^+ $, as the upper bound $ o\bigl(\exp(-N^\xi)\bigr) $ and the lower bound $\mathcal{O}\bigl(\exp(-N^{\alpha})\bigr)$ closely match due to the condition $ 0 < \xi  < \alpha {\left( {\alpha \beta  + d} \right)^{ - 1}} \sim \alpha  $.
		
		\item In both the discrete and continuous cases for $d \geqslant 2$, a slightly weaker optimality is manifested as $ \alpha \to 0^+ $. In this scenario, the indices $\xi$ and $  \alpha $ in Item \ref{opitem:s} can satisfy $\xi = \mathcal{O}^{\#}(\alpha)$; or more precisely, $\xi $ can be taken to arbitrarily approach $ d^{-1}\alpha$.
		
		\item Even as $\alpha \to (\beta^{-1})^-$, the optimality persists when $\beta \to +\infty$, in both the discrete and continuous cases for $d \geqslant 1$. In this scenario, upon taking the limit $\alpha \to (\beta^{-1})^-$,  the indices $\xi$ and $\beta$ in Item \ref{opitem:s} can satisfy $\xi = \mathcal{O}^{\#}(\beta^{-1})$; or more precisely, $\xi $ can be taken to arbitrarily approach $ (1+d)^{-1} \beta^{-1}$, as $\beta \to +\infty$.
	\end{enumerate} 
	\end{itemize}
	
It should be noted that we make no claim of optimality for arbitrary weighting functions, since the selection of alternative weighting functions may yield a further acceleration of convergence; see \cite[Theorem 2.3]{TL26a}. Indeed, determining the fastest possible rate of convergence for such weighting functions remains a fundamental open problem; see \cite{PR26}. In a forthcoming paper, employing techniques distinct from those utilized in the present work, we shall provide a complete answer to this question for toral translations: \textit{the linear-exponential rate of convergence is generally unattainable; however, any prescribed slower rate can be achieved via the construction of an appropriate weighting function.}

	\section{Principal contributions and comparison with previous literature}\label{SECNOV}	
	
	As mentioned in Section \ref{SEC31}, apart from a small portion of Theorem \ref{OPTT2}---which consists of results previously established in the literature or variants thereof---\textit{all other results in the present paper constitute novel contributions, both in their conclusions and in the underlying techniques.} We elaborate on these in detail below.
	
	As a principal contribution, Theorem \ref{OPTT1} and Corollary \ref{OPTCORO1} establish the \textit{first} lower bound estimates for the convergence rates of (non-uniformly) weighted Birkhoff averages on the torus, via almost all rotations and specific (or even all) initial points, under abstract and general regularity assumptions. From a technical standpoint, our approach partially adapts the ideas employed for the unweighted setting in \cite{Yoc80,Yoc95,KLM21} among others, while simultaneously incorporating the quantitative techniques developed in \cite{TL25b,TL26a} for upper bound estimates in the weighted case. We remark that the approaches in the latter build upon \cite{DSSY17, DY18}, with the introduction of new technical ingredients. Interestingly, establishing these lower bounds essentially relies upon the corresponding upper bound estimates as a crucial step, thereby revealing a profound underlying connection between the two. 
	 
	A substantial body of literature has been devoted to lower bound estimates for the rate of convergence of Birkhoff averages; see, for instance, \cite{Kre78,Yoc80,Yoc95,Ryz23,Ryz25,PR26}. In particular, for translations on the torus, certain intrinsic connections between regularity and nonresonance conditions regarding this rate were investigated in \cite{Yoc80,Yoc95}. Theorem \ref{OPTT1} and Corollary \ref{OPTCORO1} establish an analogous formulation in the weighted setting, albeit with potentially greater precision, as we simultaneously incorporate techniques from both harmonic analysis and number theory. 	In classical ergodic theory,   one   generally only obtains lower bounds on the Birkhoff average convergence rate for a fixed initial point \cite{KLM21}. In contrast,  our results in Theorem \ref{OPTT1} hold for all initial points in a dense set, or for all initial points on the torus, which marks a significant difference.
	
	In essence, Theorem \ref{OPTT1} and Corollary \ref{OPTCORO1} establish a fundamental observation: \textit{the rapid uniform convergence of weighted Birkhoff averages cannot, in general, be achieved if the observable lacks sufficient regularity.} Consequently, certain integrability or smallness conditions---such as those proposed in \cite{TL24a,TL24b} to guarantee uniform convergence at an arbitrary polynomial (or exponential) rate---are indeed \textit{indispensable} (see, for instance, \eqref{keji}). It should be emphasized that the phrase ``in general'' mentioned above is of critical importance, as we construct the desired observable for a fixed universal rotation (see Remark \ref{RE31}). As an intriguing contrast, it was recently demonstrated in \cite{Ton26} that for any given irrational rotation, there exists a family of observables with  low regularity (requiring only the absolute summability of their Fourier coefficients) for which the weighted Birkhoff averages exhibit a quantitative exponential upper bound on the rate of convergence (with the rate taking an identical form to that in \cite{TL25b,TL26a}). \textit{In other words, the present paper and \cite{Ton26} collectively provide an in-depth investigation into the effects of regularity and nonresonance conditions on the convergence rate of weighted Birkhoff averages.}

	We now turn our attention to Theorem \ref{OPTT2}. As an additional principal contribution, it establishes, for the \textit{first} time, \textit{explicit and optimal}  rates of convergence for weighted Birkhoff averages under various regularity settings. We refer the reader to Section \ref{SECOPT} for a detailed discussion on optimality. Theorem \ref{OPTT2} consists of two components: the lower bound estimates provided by Theorem \ref{OPTT1} and Corollary \ref{OPTCORO1}, and the upper bound estimates derived either from the novel techniques introduced in the present paper or as variants of established historical results. We shall now elaborate on the latter component in detail. In Case \ref{item3:roman_I}--\ref{TH3.3-I-1}, the upper bound estimates for $d \geqslant 2$ and those concerning $w \in C_0^M([0,1])$ are direct applications or variants of the results found in \cite[Theorem 3.1]{DSSY17}, \cite[Theorem 3.1]{DY18}, \cite[Theorem 3]{DM23}, and \cite[Theorem 2.1]{TL24a}. In contrast, the upper bound result for $d=1$ incorporates tools from number theory, such as the theory of continued fractions and the Denjoy--Koksma inequality, yielding a finer conclusion than those for higher dimensions. Such a result has heretofore been out of reach. In Case \ref{item3:roman_I}--\ref{TH3.3-I-2}, the upper bound for $d=1$ provides a quantitative version of \cite[Theorem 3.4]{TL24a} by utilizing the techniques from \cite{TL25b, TL26a}. Meanwhile, the upper bounds for $d \geqslant 2$ are achieved by augmenting the methods from Case \ref{item3:roman_I}--\ref{TH3.3-I-1} with the techniques developed in \cite{Sal04} for addressing small divisor problems, thereby yielding significantly more refined results. An advantage of this new approach is that we no longer require a stringent restriction on $ \ell $ as demanded by the upper bounds   in Case \ref{item3:roman_I}--\ref{TH3.3-I-1} (namely, $\ell>2d$ for $d \geqslant 2$); rather, the condition $\ell>d$ is sufficient\footnote{We also point out that this is adequate to ensure that the  Fourier series of general observables are well-defined.}. Case \ref{item3:roman_I}--\ref{TH3.3-I-3} builds upon Case \ref{item3:roman_I}--\ref{TH3.3-I-2} by investigating the problem under more critical regularity assumptions. The upper bound estimates in Case \ref{item3:roman_II} are direct applications or variants of the results from \cite[Theorem 3.1]{DSSY17}, \cite[Theorem 1.1]{DY18}, \cite[Theorem 3]{DM23}, and \cite[Theorem 2.1]{TL24a}. Finally, the upper bounds in Cases \ref{item3:roman_III} and \ref{item3:roman_IV} do not require the previously incorporated number-theoretic tools; instead, by relying on the techniques from \cite{TL25b, TL26a}, they establish quantitative results for logarithmic $C^\infty$ regularity and Gevrey regularity for the \textit{first} time.
	
	We emphasize that historical methods fail to achieve the optimal convergence rates established in Theorem \ref{OPTT2}. For instance, in Case \ref{item3:roman_I}--\ref{TH3.3-I-1} for $d=1$, relying on the techniques from \cite{DSSY17,DY18,TL24a} restricts one to sufficiently large $\ell$ and yields a slower convergence rate (with an error term of $\mathcal{O}(N^{-2})$), as conditions such as \eqref{keji} must be imposed. This differs markedly from our main results. Therefore, it is indeed \textit{essential} to develop the aforementioned refined methods to handle the small divisors arising from universal nonresonance conditions.
	
	In conclusion, the optimal results and  techniques encompassed by Theorem \ref{OPTT2} are, for the most part,  \textit{novel}, with some of the newly introduced methods potentially making their first appearance in weighted ergodic theory\footnote{For instance, the techniques from \cite{Sal04} initially focused on   KAM theory; see Section \ref{PFTHI} for details.}. Therefore, Theorem \ref{OPTT2} could potentially act as a useful link, offering new perspectives on related problems in both harmonic analysis and number theory.

	\section{Proofs of the main  results}\label{SECPRO}
	
	The present section is devoted to the detailed proofs of Theorem \ref{OPTT1} (Main Theorem I), Corollary \ref{OPTCORO1}, and Theorem \ref{OPTT2} (Main Theorem II).

	\subsection{Proof of Theorem \ref{OPTT1}: Lower bounds for uniform convergence rates}\label{PFT1}
	 
		\begin{proof}	
			We first address the \textit{discrete} case in Theorem \ref{OPTT1}. It suffices to consider rotations on the torus  $ \mathbb{T}  $. To begin, let us recall a basic fact: for \textit{almost every} $ \rho  \in {\mathbb{T}  \setminus \mathbb{Q}} $ with approximants $ {\left\{ {{p_n}/{q_n}} \right\}_{n \in {\mathbb{N}^ + }}} $, there exist infinitely many $ \nu \in \mathbb{N}^+ $ such that 
			\begin{equation}\notag 
				{q_{\nu + 1}} > \nu{q_\nu}
			\end{equation} 
			by virtue of the divergence of the series $ \sum\nolimits_{\nu = 1}^\infty  {{\nu^{ - 1}}}  =  + \infty  $ and the Borel-Bernstein theorem (cf. \cite[Theorem 3.2.5]{QQ13}).

			For the given approximation function $\widetilde{\Delta} $, we \textit{inductively} choose a strictly increasing sequence of positive integers $\{m_s\}_{s \in \mathbb{N}^+}$ satisfying: 
			\begin{enumerate}[label=(\alph*)] 
			\item \label{item:a} $ {q_{m_s + 1}} > m_s{q_{m_s}}, \quad \forall s \in \mathbb{N}^+$;
			\item \label{item:b} $ \mathop {\lim }\limits_{s \to \infty } \left| {{m_{s + 1}} - {m_s}} \right| =  + \infty  $;
				\item \label{item:c} $ \sum\limits_{j = 1}^{s- 1} {\frac{1}{{\widetilde \Delta \left( {{q_{{m_j}}}} \right)}}\exp \left( { - C_1{{\left( {\frac{{{q_{{m_s}}}}}{{{q_{{m_j} + 1}}}}} \right)}^{\beta^{-1}}}} \right)}  \ll \frac{1}{{\widetilde \Delta \left( {{q_{{m_s}}}} \right)}}, \quad 0<C_1<e^{-1}\beta, \quad  \forall s \gg 1 $.
		\end{enumerate}
			Here, we recall that $\beta \geqslant 1$ is the constant such that the weighting function $w  \in \mathcal{W}_\beta$. It should be noted that these requirements are indeed achievable, given that $x^\varepsilon \leqslant \widetilde{\Delta}(x) \leqslant \exp(x^\zeta)$ for some $\varepsilon > 0$ and $0 < \zeta < \beta^{-1}$, and that $q_n$ grows at least exponentially.
			
			We shall establish  Cases \ref{item:roman_I}, \ref{item:roman_II}, and \ref{item:roman_III} in sequence. Our analysis focuses primarily on Case \ref{item:roman_I}, as Cases \ref{item:roman_II} and \ref{item:roman_III} can be treated as variants thereof.
			\vspace{3mm}

			\noindent \textbf{\textit{Proof of Case  \ref{item:roman_I}:}} \quad 	With these preparations, we are now equipped to construct the desired observable $\Psi \in \mathbb{R}_{\widetilde{\Delta}}(\mathbb{T} )$ such that the weighted Birkhoff average admits a lower bound for the convergence rate of order $ \mathcal{O}(\widetilde \Delta {\left( N \right)^{ - 1}}) $. Define 
			\[\Psi \left( \theta  \right) = \operatorname{Re} \sum\limits_{j = 1}^\infty  {\frac{{{\exp\left(2\pi i{q_{{m_j}}}\theta \right)}}}{{\widetilde \Delta \left( {{q_{{m_j}}}} \right)}}} ,\quad \theta \in \mathbb{T}.\]
			It is evident that $ \Psi $ is well-defined and has zero mean on $ \mathbb{T}  $, i.e., $ \int_{{\mathbb{T} }} {\Psi \left( x  \right) {d}x }  = 0 $.
		
	Construct the set  $\Xi:= {\left\{ {{\{{  {\iota\rho }\} }}} \right\}_{\iota \in {\mathbb{N}  }}} $. Then $ \Xi $ is dense in $ \mathbb{T}  $ as a consequence of the irrationality of $ \rho $ and Kronecker's approximation theorem. For $ s \in \mathbb{N}^+ $ sufficiently large, consider the following weighted Birkhoff average with an initial point $ \theta \in \Xi $ (i.e., there exists some $ \iota \in \mathbb{N}  $ such that $ \theta =\{\iota\rho\} $):
	\begin{align}
		&\;\frac{1}{{{A_{{q_{{m_s}}}}}}}\sum\limits_{n = 0}^{{q_{{m_s}}} - 1} {w\left( {n/{q_{{m_s}}}} \right)\Psi \left( {\mathscr{T}_\rho ^n\left( \theta \right)} \right)}-\int_{{\mathbb{T} }} {\Psi \left( x  \right) {d}x }\notag \\
		= &\;\operatorname{Re} \left( {\sum\limits_{j = 1}^\infty  {\frac{1}{{{A_{{q_{{m_s}}}}}}}\left( {\sum\limits_{n = 0}^{{q_{{m_s}}} - 1} {w\left( {n/{q_{{m_s}}}} \right)\frac{{{\exp\left(2\pi i{q_{{m_j}}}{(\theta +n\rho)} \right)}}}{{\widetilde \Delta \left( {{q_{{m_j}}}} \right)}}} } \right)} } \right)\notag\\
		\label{hebing}: = &\;{\mathcal{S}_1}\left( {{q_{{m_s}}},\theta} \right) + {\mathcal{S}_2}\left( {{q_{{m_s}}},\theta} \right) + {\mathcal{S}_3}\left( {{q_{{m_s}}},\theta} \right),
	\end{align}
	where the terms  ${\mathcal{S}_1}\left( {{q_{{m_s}}},\theta} \right)$, $ {\mathcal{S}_2}\left( {{q_{{m_s}}},\theta} \right)$, and ${\mathcal{S}_3}\left( {{q_{{m_s}}},\theta} \right)$ are respectively defined as:
	\begin{align*}
		{\mathcal{S}_1}\left( {{q_{{m_s}}},\theta} \right) &:= \operatorname{Re} \left( {\frac{1}{{{A_{{q_{{m_s}}}}}}}\sum\limits_{j = 1}^{s - 1} {\sum\limits_{n = 0}^{{q_{{m_s}}} - 1} {w\left( {n/{q_{{m_s}}}} \right)\frac{{{\exp\left(2\pi i{q_{{m_j}}}{(\theta +n\rho)}\right)}}}{{\widetilde \Delta \left( {{q_{{m_j}}}} \right)}}} } } \right),\\
		{\mathcal{S}_2}\left( {{q_{{m_s}}},\theta} \right) &:= \operatorname{Re} \left( {\frac{1}{{{A_{{q_{{m_s}}}}}}}\sum\limits_{n = 0}^{{q_{{m_s}}} - 1} {w\left( {n/{q_{{m_s}}}} \right)\frac{{{\exp\left(2\pi i{q_{{m_s}}}{(\theta +n\rho)}\right)}}}{{\widetilde \Delta \left( {{q_{{m_s}}}} \right)}}} } \right),\\
		{\mathcal{S}_3}\left( {{q_{{m_s}}},\theta} \right) &:= \operatorname{Re} \left( {\frac{1}{{{A_{{q_{{m_s}}}}}}}\sum\limits_{j = s + 1}^\infty  {\sum\limits_{n = 0}^{{q_{{m_s}}} - 1} {w\left( {n/{q_{{m_s}}}} \right)\frac{{{\exp\left(2\pi i{q_{{m_j}}}{(\theta +n\rho)}\right)}}}{{\widetilde \Delta \left( {{q_{{m_j}}}} \right)}}} } } \right).
	\end{align*}
	We shall demonstrate that, given our choice of the sequence $\{m_s\}_{s \in \mathbb{N}^+}$, the term $\mathcal{S}_2(q_{m_s}, \theta)$ acts as the \textit{dominant} term, effectively controlling the magnitudes of $\mathcal{S}_1(q_{m_s}, \theta)$ and $\mathcal{S}_3(q_{m_s}, \theta)$. These terms will be estimated separately using \textit{different} techniques.
	
	The \textit{principal} difficulty lies in estimating the first term $\mathcal{S}_1(q_{m_s},\theta)$. We begin with a small-divisor analysis. For any integer $ M \geqslant 2 $, the following bound holds for $1 \leqslant j \leqslant s - 1$:
	\begin{equation}\label{divisor}
		\sum\limits_{n \in \mathbb Z } {\frac{1}{{{{\left| {{q_{{m_j}}}\rho  - n} \right|}^M}}}}  \leqslant {2^{M + 1}}q_{{m_j} + 1}^M.
	\end{equation}
	Indeed, since $p_{m_j}$ is the nearest integer to $q_{m_j}\rho$, we have
	\begin{align*}
		\sum\limits_{n \in \mathbb Z }{\frac{1}{{{{\left| {{q_{{m_j}}}\rho  - n} \right|}^M}}}}  &= \frac{1}{{{{\left| {{q_{{m_j}}}\rho  - {p_{{m_j}}}} \right|}^M}}} + \sum\limits_{n \ne {p_{{m_j}}}} {\frac{1}{{{{\left| {{q_{{m_j}}}\rho  - n} \right|}^M}}}} \\
		& \leqslant {\left( {2{q_{{m_j} + 1}}} \right)^M} + 2\sum\limits_{n = 1}^\infty  {\frac{1}{{{n^M}}}}+2^M \\
		& \leqslant {2^M}q_{{m_j} + 1}^M + {3^{ - 1}}{\pi ^2}+2^M\\
		& \leqslant {2^{M + 1}}q_{{m_j} + 1}^M.
	\end{align*}
	Here we have utilized \eqref{liangce} and the fact that $\sum_{n=1}^\infty n^{-M} \leqslant \sum_{n=1}^\infty n^{-2} = 6^{-1}\pi^2 $ for $M \geqslant 2$.   Recall that $w \in \mathcal{W}_\beta$ (see Definition \ref{generalwei}). Then, by integration by parts, it is evident that for any $\eta \ne 0$ and $\nu \in \mathbb{N}^+$,
			\begin{align}
				\left| {\int_0^1 {{{  w} }\left( z \right){\exp\left({2\pi i  \eta   z}\right)}dz} } \right| &= \left| {{{\left( {2\pi i \eta } \right)}^{ - \nu }}\int_0^1 {{D^\nu }{{  w} }\left( z \right){\exp\left({2\pi i  \eta  z}\right)}dz} } \right|\notag \\
				&\leqslant {\left| {2\pi  \eta } \right|^{ - \nu }}{\left\| {{D^\nu }{{  w} }} \right\|_{{L^1}\left( {0,1} \right)}}. \label{FBJF}
			\end{align}
	Utilizing \eqref{divisor} and \eqref{FBJF}, we obtain:
	\begin{align}
	 \left| {\sum\limits_{n \in \mathbb Z }  {\int_0^1 {w\left( z \right){\exp\left({2\pi i{q_{{m_s}}}\left( {{q_{{m_j}}}\rho  - n} \right)z}\right)} {d}z} } } \right| 
	&	\leqslant  \sum\limits_{n \in \mathbb Z }  {\frac{{{{\left\| {{D^{M_j}w}} \right\|}_{{L^1}\left( {0,1} \right)}}}}{{{{\left| {2\pi {q_{{m_s}}}\left( {{q_{{m_j}}}\rho  - n} \right)} \right|}^{{M_j}}}}}} \notag \\
	&	\leqslant  \bar C\sum\limits_{n   \in \mathbb Z }  {\frac{{\lambda ^{{M_j}}}{M_j^{{M_j}\beta }}}{{{{\left| {2\pi {q_{{m_s}}}\left( {{q_{{m_j}}}\rho  - n} \right)} \right|}^{{M_j}}}}}} \notag \\
	 &	\leqslant  \bar C\frac{{\lambda ^{{M_j}}}{M_j^{{M_j}\beta }}}{{{{\left( {2\pi {q_{{m_s}}}} \right)}^{{M_j}}}}} \cdot {2^{{M_j}+ 1}}q_{{m_j} + 1}^{{M_j}}\notag \\
		\label{S11}& \leqslant  \bar C{\lambda ^{{M_j}}}M_j^{{M_j}\beta }q_{{m_s}}^{ - {M_j}}q_{{m_j} + 1}^{M_j},  \quad  1\leqslant j \leqslant s-1,
	\end{align}
	for some integer $   M_j\geqslant 2 $ which will be determined later. Let $\bar C\exp(\mathscr{F}(M_j))$ denote the right-hand side of \eqref{S11}, where
	\[\mathscr{F}\left( x \right) := \beta x\log x+x\log \lambda + x\log {q_{{m_j} + 1}} - x\log {q_{{m_s}}}, \quad x \geqslant 2.\]
	A standard monotonicity argument allows us to choose $M_j$ for $1\leqslant j\leqslant s-1$ to minimize $\exp(\mathscr{F}(M_j))$. Specifically, setting
	\begin{equation}\label{MJ}
		{M_j} \sim {e^{ - 1}}{\left({\lambda ^{ - 1}} {{q_{{m_s}}}q_{{m_j} + 1}^{ - 1}} \right)^{{\beta ^{ - 1}}}}\geqslant 2, \quad 1 \leqslant j \leqslant s - 1
	\end{equation}
	yields  
	\[\exp \left( {\mathscr{F}\left( {{M_j}} \right)} \right)   \lesssim  \exp \left( { - C_1 {{\left( {{q_{{m_s}}}q_{{m_j} + 1}^{ - 1}} \right)}^{{\beta ^{ - 1}}}}} \right),\quad 1 \leqslant j \leqslant s - 1.\]
	In view of \eqref{S11}, this implies:
	\begin{equation}\label{S120}
		\left| {\sum\limits_{n \in \mathbb Z }  {\int_0^1 {w\left( z \right){\exp\left({2\pi i{q_{{m_s}}}\left( {{q_{{m_j}}}\rho  - n} \right)z}\right)} {d}z} } } \right| \lesssim\exp \left( { - C_1 {{\left( {{q_{{m_s}}}q_{{m_j} + 1}^{ - 1}} \right)}^{{\beta ^{ - 1}}}}} \right), \quad 1 \leqslant j \leqslant s - 1.
	\end{equation}
	Note that in \eqref{MJ},  $ M_j \to +\infty $ as $ s \to +\infty$  for all $ 1 \leqslant j \leqslant s-1 $ due to    assumption \ref{item:b} (because $ \left| {{m_s} - ({m_{s - 1}} + 1}) \right| \gg 1 $ for $ s $ sufficiently large and the sequence $ \{q_n\}_{n \in \mathbb{N}^+} $ exhibits  the exponential properties), and therefore our choice is indeed  valid. Next, by straightforward verification, we obtain
	\begin{align*}
	\left| {{\mathcal{S}_1}\left( {{q_{{m_s}}},\theta} \right)} \right| & \leqslant \left| {\frac{1}{{{A_{{q_{{m_s}}}}}}}\sum\limits_{j = 1}^{s - 1} {\sum\limits_{n = 0}^{{q_{{m_s}}} - 1} {w\left( {n/{q_{{m_s}}}} \right)\frac{{{\exp\left({2\pi i{q_{{m_j}}}{(\theta +n\rho)} }\right)}}}{{\widetilde \Delta \left( {{q_{{m_j}}}} \right)}}} } } \right|\notag \\
	&  \leqslant \sum\limits_{j = 1}^{s - 1} {\frac{1}{{\widetilde \Delta \left( {{q_{{m_j}}}} \right)}}\left| {\frac{1}{{{A_{{q_{{m_s}}}}}}}\sum\limits_{n = 0}^{{q_{{m_s}}} - 1} {w\left( {n/{q_{{m_s}}}} \right){\exp\left({2\pi i{q_{{m_j}}}n\rho }\right)}} } \right|} \notag \\
	&  = \sum\limits_{j = 1}^{s - 1} {\frac{1}{{\widetilde \Delta \left( {{q_{{m_j}}}} \right)}}\frac{1}{{{A_{{q_{{m_s}}}}}}}\left| {\sum\limits_{n \in \mathbb Z }  {w\left( {n/{q_{{m_s}}}} \right){\exp\left({2\pi i{q_{{m_j}}}n\rho }\right)}} } \right|}  .
	\end{align*}
	Applying the Poisson summation formula (cf. \cite[Chapter 3]{Gra14}) to the function 
	\[x\mapsto  {w} \left(x/q_{m_s}\right)\exp\left(2\pi i q_{m_j} x \rho\right),\]
	we deduce 
	\begin{align*}
	  {\sum\limits_{n \in \mathbb Z }  {w\left( {n/{q_{{m_s}}}} \right){\exp\left({2\pi i{q_{{m_j}}}n\rho }\right)}} }   &=  {\sum\limits_{n \in \mathbb Z }  {\int_{\mathbb R } {w\left( {t/{q_{{m_s}}}} \right){\exp\left({2\pi i{q_{{m_j}}}\rho t}\right)} \cdot {\exp\left({ - 2\pi i n t}\right)} {d}t} } }  \\
	&=q_{m_s}  {\sum\limits_{n \in \mathbb Z }  {\int_0^1 {w\left( z \right){\exp\left({2\pi i{q_{{m_s}}}\left( {{q_{{m_j}}}\rho  - n} \right)z}\right)} {d}z} } }  .
	\end{align*}
	Consequently, we arrive at
	\begin{align}
		\left| {{\mathcal{S}_1}\left( {{q_{{m_s}}},\theta} \right)} \right| 
	& \leqslant \frac{{{q_{{m_s}}}}}{{{A_{{q_{{m_s}}}}}}}\sum\limits_{j = 1}^{s - 1} {\frac{1}{{\widetilde \Delta \left( {{q_{{m_j}}}} \right)}}\left| {\sum\limits_{n \in \mathbb Z }  {\int_0^1 {w\left( z \right){\exp\left({2\pi i{q_{{m_s}}}\left( {{q_{{m_j}}}\rho  - n} \right)z}\right)} {d}z} } } \right|} 	\notag \\
		\label{S18}&  \lesssim \sum\limits_{j = 1}^{s - 1} {\frac{1}{{\widetilde \Delta \left( {{q_{{m_j}}}} \right)}}\exp \left( { - C_1 {{\left( {{q_{{m_s}}}q_{{m_j} + 1}^{ - 1}} \right)}^{{\beta ^{ - 1}}}}} \right)}  \\
		\label{S19}& \leqslant \frac{1}{8{\widetilde \Delta \left( {{q_{{m_s}}}} \right)}}, \quad  s \gg 1.
	\end{align}
	Here, \eqref{S18} employs \eqref{S120} and the following uniform boundedness
	\[\frac{u}{{{A_u}}} = {\left( {\frac{1}{u}\sum\limits_{n = 0}^{u - 1} {w\left( {n/u} \right)} } \right)^{ - 1}} \to {\left( {\int_0^1 {w\left( x \right) dx} } \right)^{ - 1}} = 1 \quad \text{as} \quad u \gg 1,\]
	and \eqref{S19} uses assumption \ref{item:c}.

	We now turn our attention to the second term $\mathcal{S}_2(q_{m_s},\theta)$, which turns out to be the \textit{dominant} one. In view of  \eqref{zhishu}, \eqref{zhishu2} and $ \theta=\{\iota \rho\} \in \Xi $, we have
	\begin{align}
		&\; \left| {\frac{1}{{{A_{{q_{{m_s}}}}}}}\sum\limits_{n = 0}^{{q_{{m_s}}} - 1} {w\left( {n/{q_{{m_s}}}} \right){\exp \left({2\pi i{q_{{m_s}}}{(\theta +n\rho)} }\right)}}  - 1} \right|\notag \\
		= &\; \left| {\frac{1}{{{A_{{q_{{m_s}}}}}}}\sum\limits_{n = 0}^{{q_{{m_s}}} - 1} {w\left( {n/{q_{{m_s}}}} \right){\exp \left({2\pi i{q_{{m_s}}}{(\theta +n\rho)} }\right)}}  - \frac{1}{{{A_{{q_{{m_s}}}}}}}\sum\limits_{n = 0}^{{q_{{m_s}}} - 1} {w\left( {n/{q_{{m_s}}}} \right)} } \right|\notag \\
		\leqslant& \; \frac{1}{{{A_{{q_{{m_s}}}}}}}\sum\limits_{n = 0}^{{q_{{m_s}}} - 1} {w\left( {n/{q_{{m_s}}}} \right)\left| {{\exp\left({2\pi i{q_{{m_s}}}{(\theta +n\rho)} }\right)} - 1} \right|} \notag \\
		\leqslant&\; \frac{{{\widetilde C}}}{{{A_{{q_{{m_s}}}}}}}\sum\limits_{n = 0}^{{q_{{m_s}}} - 1} {w\left( {n/{q_{{m_s}}}} \right){{\left\| {{q_{{m_s}}}{(\theta +n\rho)} } \right\|}_\mathbb{Z}}} \notag \\
		=&\; \frac{{{\widetilde C}}}{{{A_{{q_{{m_s}}}}}}}\sum\limits_{n = 0}^{{q_{{m_s}}} - 1} {w\left( {n/{q_{{m_s}}}} \right){{\left\| {{q_{{m_s}}}{(\iota +n)\rho} } \right\|}_\mathbb{Z}}} \notag \\
		\leqslant&\; \frac{{\widetilde C^2}}{{{A_{{q_{{m_s}}}}}}}\sum\limits_{n = 0}^{{q_{{m_s}}} - 1} {w\left( {n/{q_{{m_s}}}} \right){(\iota+n)}{{\left\| {{q_{{m_s}}}\rho } \right\|}_\mathbb{Z}}}. \label{SSSS} 
	\end{align}
	Therefore, by \eqref{liangce}, \eqref{SSSS} and assumption \ref{item:a},  we can  further estimate
	\begin{align}
		\left| {\frac{1}{{{A_{{q_{{m_s}}}}}}}\sum\limits_{n = 0}^{{q_{{m_s}}} - 1} {w\left( {n/{q_{{m_s}}}} \right){\exp \left({2\pi i{q_{{m_s}}}{(\theta +n\rho)} }\right)}}  - 1} \right| &\leqslant \frac{{2\widetilde C^2}}{{{A_{{q_{{m_s}}}}}}}\sum\limits_{n = 0}^{{q_{{m_s}}} - 1} {w\left( {n/{q_{{m_s}}}} \right){q_{{m_s}}}q_{{m_s} + 1}^{ - 1}}  \notag \\
	\label{CCC}	&=  2\widetilde C^2{q_{{m_s}}}q_{{m_s} + 1}^{ - 1} \leqslant  2\widetilde C^2 m_s^{ - 1} 	\leqslant  \frac{1}{2},  
	\end{align}
	whenever $ s  $ is sufficiently large  (note that  $\iota \leqslant q_{m_s} $ in this case). This crucial fact directly implies that 
	\[\operatorname{Re} \left( {\frac{1}{{{A_{{q_{{m_s}}}}}}}\sum\limits_{n = 0}^{{q_{{m_s}}} } {w\left( {n/{q_{{m_s}}}} \right){\exp\left({2\pi i{q_{{m_s}}}(\theta+ n\rho) }\right)}}  } \right) \geqslant \frac{1}{2}, \quad s \gg 1,\]
	since $|z-1| \leqslant 2^{-1}$ implies $\operatorname{Re} z \geqslant2^{-1}$ for any $z \in \mathbb{C}$.
	Consequently, we arrive at the following key estimate:
	\begin{equation}\label{S2GUJI}
		\left| {{\mathcal{S}_2}\left( {{q_{{m_s}}},\theta} \right)} \right| = {\mathcal{S}_2}\left( {{q_{{m_s}}},\theta} \right) \geqslant \frac{1}{{2\widetilde \Delta \left( {{q_{{m_s}}}} \right)}}, \quad s \gg 1.
	\end{equation}
	
	For the third term $ {\mathcal{S}_3}\left( {{q_{{m_s}}},\theta} \right) $, the triangle inequality yields
	\begin{align}
		\left| {{\mathcal{S}_3}\left( {{q_{{m_s}}},\theta} \right)} \right| & \leqslant \left| {\frac{1}{{{A_{{q_{{m_s}}}}}}}\sum\limits_{j = s + 1}^\infty  {\sum\limits_{n = 0}^{{q_{{m_s}}} - 1} {w\left( {n/{q_{{m_s}}}} \right)\frac{{{\exp\left({2\pi i{q_{{m_j}}}(\theta+ n\rho) }\right)}}}{{\widetilde \Delta \left( {{q_{{m_j}}}} \right)}}} } } \right|\notag\\
		&  \leqslant \sum\limits_{j = s + 1}^\infty  {\frac{1}{{\widetilde \Delta \left( {{q_{{m_j}}}} \right)}}\left( {\frac{1}{{{A_{{q_{{m_s}}}}}}}\sum\limits_{n = 0}^{{q_{{m_s}}} - 1} {w\left( {n/{q_{{m_s}}}} \right)} } \right)}  = \sum\limits_{j = s + 1}^\infty  {\frac{1}{{\widetilde \Delta \left( {{q_{{m_j}}}} \right)}}} \notag \\ 
		\label{S31}&\leqslant \frac{1}{{\widetilde \Delta \left( {{q_{{m_{s + 1}}}}} \right)}} 
		\leqslant \frac{1}{8{\widetilde \Delta \left( {{q_{{m_s}}}} \right)}}, \quad s \gg 1 ,
	\end{align}
	where \eqref{S31} follows from the exponential growth of $\{q_n\}_{n \in \mathbb{N}^+}$, assumption \ref{item:a}, and the restrictions on the approximation function $\widetilde{\Delta}$.
	
	Finally, substituting \eqref{S19}, \eqref{S2GUJI} and \eqref{S31} into \eqref{hebing}, we immediately derive a lower bound for the weighted Birkhoff average initialized at $\theta=\{\iota \rho\} \in \Xi$:
	\begin{align*}
		\left|\frac{1}{{{A_{{q_{{m_s}}}}}}}\sum\limits_{n = 0}^{{q_{{m_s}}} - 1} {w\left( {n/{q_{{m_s}}}} \right)\Psi \left( {\mathscr{T}_\rho ^n\left( \theta \right)} \right)}-\int_{{\mathbb{T} }} {\Psi \left( x  \right) {d}x } \right|
		&	\geqslant  \left| {{\mathcal{S}_2}\left( {{q_{{m_s}}},\theta} \right)} \right| - \left| {{\mathcal{S}_1}\left( {{q_{{m_s}}},\theta} \right)} \right| - \left| {{\mathcal{S}_3}\left( {{q_{{m_s}}},\theta} \right)} \right|\\
		&	\geqslant  \frac{1}{{2\widetilde \Delta \left( {{q_{{m_s}}}} \right)}} - \frac{1}{{8\widetilde \Delta \left( {{q_{{m_s}}}} \right)}} - \frac{1}{{8\widetilde \Delta \left( {{q_{{m_s}}}} \right)}}\\
		&	=  \frac{1}{{4\widetilde \Delta \left( {{q_{{m_s}}}} \right)}}, \quad s \gg 1.
	\end{align*}
	By setting $N=q_{m_s}$ for sufficiently large $s$, which yields infinitely many such $N$, we conclude the proof of Case \ref{item:roman_I}.
	\vspace{3mm}

	\noindent \textbf{\textit{Proof of Case  \ref{item:roman_II}:}}\quad 
	The proof proceeds along the same lines as that of Case \ref{item:roman_I}.  We construct an observable  $\Psi^* \in \mathbb{R}^2_{\widetilde{\Delta}}(\mathbb{T} )$ defined by
	\begin{equation}\label{PSI*}
	{\Psi ^*}\left( \theta  \right) = {\left( {\operatorname{Re} \sum\limits_{j = 1}^\infty  {\frac{{\exp \left( {2\pi i{q_{{m_j}}}\theta } \right)}}{{\widetilde\Delta \left( {{q_{{m_j}}}} \right)}}} ,\;\operatorname{Im} \sum\limits_{j = 1}^\infty  {\frac{{\exp \left( {2\pi i{q_{{m_j}}}\theta } \right)}}{{\widetilde\Delta \left( {{q_{{m_j}}}} \right)}}} } \right)^ \top }
	\end{equation}
	and employ a similar decomposition as described in \eqref{hebing}. Next, we consider uniform estimate for \textit{every} $ \theta \in \mathbb T $. While the estimates for $\mathcal{S}_1(q_{m_s},\theta)$ and $\mathcal{S}_3(q_{m_s},\theta)$ remain fundamentally unchanged, the \textit{crucial} step lies in the treatment of $\mathcal{S}_2(q_{m_s},\theta)$, where, rather than $1$, we employ the auxiliary variable ${\exp(2\pi iq_{m_s}\theta)}$. In view of the approaches presented in \eqref{SSSS} and \eqref{CCC}, it follows that 
	\begin{align*}
		&\;\left| {\frac{1}{{{A_{{q_{{m_s}}}}}}}\sum\limits_{n = 0}^{{q_{{m_s}}} - 1} {w\left( {n/{q_{{m_s}}}} \right)\exp \left( {2\pi i{q_{{m_s}}}(\theta  + n\rho )} \right)}  - \exp \left( {2\pi i{q_{{m_s}}}\theta } \right)} \right|\notag \\
		=&\; \left| {\frac{1}{{{A_{{q_{{m_s}}}}}}}\sum\limits_{n = 0}^{{q_{{m_s}}} - 1} {w\left( {n/{q_{{m_s}}}} \right)\exp \left( {2\pi i{q_{{m_s}}}(\theta  + n\rho )} \right)}  - \frac{1}{{{A_{{q_{{m_s}}}}}}}\sum\limits_{n = 0}^{{q_{{m_s}}} - 1} {w\left( {n/{q_{{m_s}}}} \right)\exp \left( {2\pi i{q_{{m_s}}}\theta } \right)} } \right|\notag \\
		\leqslant &\;\frac{1}{{{A_{{q_{{m_s}}}}}}}\sum\limits_{n = 0}^{{q_{{m_s}}} - 1} {w\left( {n/{q_{{m_s}}}} \right)\left| {\exp \left( {2\pi i{q_{{m_s}}}\theta } \right)} \right|\left| {\exp \left( {2\pi i{q_{{m_s}}}n\rho } \right) - 1} \right|}  \notag \\
		\leqslant &\; \left( {1 - \frac{1}{{\sqrt 2 }}} \right) = \left( {1 - \frac{1}{{\sqrt 2 }}} \right)\left| {\exp \left( {2\pi i{q_{{m_s}}}\theta } \right)} \right|,\quad s \gg 1\notag .
	\end{align*}
	This leads to 
	\[\left| {\frac{1}{{{A_{{q_{{m_s}}}}}}}\sum\limits_{n = 0}^{{q_{{m_s}}} - 1} {w\left( {n/{q_{{m_s}}}} \right)\exp \left( {2\pi i{q_{{m_s}}}(\theta  + n\rho )} \right)} } \right| \geqslant \frac{1}{{\sqrt 2 }}\left| {\exp \left( {2\pi i{q_{{m_s}}}\theta } \right)} \right| = \frac{1}{{\sqrt 2 }}.\]
	By identifying this $ 1 $-dimensional complex case with its equivalent $ 2 $-dimensional real representation, we immediately obtain 
	\[{\left\| {\frac{1}{{{A_{{q_{{m_s}}}}}}}\sum\limits_{n = 0}^{{q_{{m_s}}} - 1} {w\left( {n/{q_{{m_s}}}} \right){{\left( {\operatorname{Re} \exp \left( {2\pi i{q_{{m_j}}}\theta } \right),\;\operatorname{Im} \exp \left( {2\pi i{q_{{m_j}}}\theta } \right)} \right)}^ \top }} } \right\|_{{\ell ^\infty }}} \geqslant \frac{1}{2}.\]
	Consequently, we still arrive at the estimate as in \eqref{S2GUJI}:
	\begin{equation}\label{S2GW} 
		\left\| {{\mathcal{S}_2}\left( {{q_{{m_s}}},\theta} \right)} \right\|_{\ell^\infty}  \geqslant \frac{1}{{2\widetilde \Delta \left( {{q_{{m_s}}}} \right)}}, \quad s \gg 1.
	\end{equation}
	Note that in this context, $s$ does not need to depend on $\theta$. Recalling that the estimates for $\mathcal{S}_1(q_{m_s}, \theta)$ and $\mathcal{S}_3(q_{m_s}, \theta)$ are also independent of $\theta$, we thereby conclude the proof for Case \ref{item:roman_II}.
	\vspace{3mm}

	\noindent \textbf{\textit{Proof of Case  \ref{item:roman_III}:}}\quad  Case \ref{item:roman_III} is a direct consequence of Case \ref{item:roman_II}. Observing that \eqref{S2GW} holds for every $\theta \in \mathbb{T}$, we can define the observable $\Psi_* \in \mathbb{R}_{\widetilde{\Delta}}(\mathbb{T})$ by taking either the first or the second component of $\Psi^*$ constructed in \eqref{PSI*}, depending on the specific value of $\theta$. This allows us to reduce the observable to a $ 1 $-dimensional setting, which proves Case \ref{item:roman_III}.
	\vspace{3mm}
	
	Finally, let us address the \textit{continuous} case in Theorem \ref{OPTT1}, which differ slightly from the previous discrete case in several aspects, such as the dimensionality, the construction of the observables, and the selection of the set. We consider Case \ref{item:roman_I} as an illustration; Cases \ref{item:roman_II} and \ref{item:roman_III} can be treated analogously. For almost every $\rho = (\rho_1, \rho_2) \in \mathbb{T}^2$, we may assume without loss of generality that $0 < \rho_2 < \rho_1$ and $ \rho _1^{ - 1}{\rho _2} \in \mathbb{T}\setminus \mathbb Q $. In this context, the approximants $ {\left\{ {{p_n}/{q_n}} \right\}_{n \in {\mathbb{N}^ + }}} $ from the discrete setting of Case \ref{item:roman_I} are taken with respect to $\rho_1^{-1}\rho_2$, while assumptions \ref{item:a}, \ref{item:b}, and \ref{item:c} remain as previously defined. It is then straightforward to verify that $p_n < q_n$ holds for all sufficiently large $n \in \mathbb{N}^+$. Next, we construct the required observable as 
	\[\Phi \left( \theta  \right) = \operatorname{Re} \sum\limits_{j = 1}^\infty  {\frac{{\exp \left( {2\pi i\left\langle {{k_{{m_j}}},\theta } \right\rangle } \right)}}{{\widetilde \Delta \left( {{{\left\| {{k_{{m_j}}}} \right\|}_{{\ell ^\infty }}}} \right)}}} ,\quad \theta  \in {\mathbb{T}^2}\]
	with $ {k_{{m_j}}} = \left( { - {p_{{m_j}}},{q_{{m_j}}}} \right) $ for all $ j \in \mathbb{N}^+ $.
	 Clearly, since
	\[{\left\| {{k_{{m_j}}}} \right\|_{{\ell ^\infty }}} = \sup \left\{ {\left| {{p_{{m_j}}}} \right|,\left| {{q_{{m_j}}}} \right|} \right\} = {q_{{m_j}}},\quad \forall j \in {\mathbb{N}^ + },\]
	it follows that
	\begin{equation}\label{I-1}
		\widetilde \Delta \left( {{{\left\| {{k_{{m_j}}}} \right\|}_{{\ell ^\infty }}}} \right) = \widetilde \Delta \left( {{q_{{m_j}}}} \right),\quad \forall j \in {\mathbb{N}^ + }.
	\end{equation}
	Moreover, we have $ \int_{{\mathbb{T}^2}} {\Phi \left( x \right)dx}  = 0 $. Construct the set 
	\[\Theta : = \left\{ {\theta  \in {\mathbb{T}^2}:\quad \left( {{\theta _1},{\theta _2}} \right) = \left( {\left\{ {\iota {\rho _1}} \right\},\left\{ {\iota {\rho _2}} \right\}} \right),\;\iota  \in \mathbb{N}} \right\}.\]
	Then $ \left( {0,0} \right) \in \Theta  $, but $ \Theta $ may not be dense in $ \mathbb{T}^2 $. For an initial point $ \theta = {\left( {\left\{ {\iota {\rho _1}} \right\},\left\{ {\iota {\rho _2}} \right\}} \right)} \in \Theta $ with some $ \iota \in \mathbb{N} $, let us consider the continuous version of the weighted Birkhoff average 
	\begin{equation}\label{I-2}
		\frac{1}{{{q_{{m_s}}}}}\int_0^{{q_{{m_s}}}} {w\left( {t/{q_{{m_s}}}} \right)\Phi \left( {{\mathscr{T}}_\rho ^t\left( \theta  \right)} \right)dt}  - \int_{{\mathbb{T}^2}} {\Phi \left( x \right)dx}
	\end{equation}
	 and, following the approach in \eqref{hebing}, decompose it into three components $\mathcal{S}_1(q_{m_s}, \theta)$, $\mathcal{S}_2(q_{m_s}, \theta)$ and $\mathcal{S}_3(q_{m_s}, \theta)$. For brevity, their explicit expressions are omitted here. Regarding $\mathcal{S}_1(q_{m_s}, \theta)$, the analysis here is more straightforward than in the discrete case (as noted in \cite{TL24a}), as it obviates the need for the Poisson summation formula while yielding an identical estimate. For $\mathcal{S}_3(q_{m_s}, \theta)$, the treatment is entirely the same. The \textit{principal} difference lies in the estimate of $\mathcal{S}_2(q_{m_s}, \theta)$, since the estimate in \eqref{zhishu2} fails in the continuous setting. From 
	 \[\left| {\left\langle {{k_{m_s}},\rho _1^{ - 1}{\rho _2}} \right\rangle } \right| = {\left\| {\left\langle {{k_{m_s}},\rho _1^{ - 1}{\rho _2}} \right\rangle } \right\|_\mathbb{Z}} < q_{{m_s} + 1}^{ - 1},\]
	  we immediately obtain 
	  \[\left| {\left\langle {{k_{{m_s}}},\left( {\iota  + t} \right)\rho } \right\rangle } \right| \leqslant \left( {\iota  + t} \right)\left| {{\rho _1}} \right|\left| {\left\langle {{k_{{m_s}}},\rho _1^{ - 1}{\rho _2}} \right\rangle } \right| < \left( {\iota  + t} \right)q_{{m_s} + 1}^{ - 1},\quad \forall t \geqslant 0,\]
	  which, combined with \eqref{zhishu} and \eqref{CCC}, implies
	   \begin{align*}
	   &\;\left| {\frac{1}{{{q_{{m_s}}}}}\int_0^{{q_{{m_s}}}} {w\left( {t/{q_{{m_s}}}} \right)\exp \left( {2\pi i\left\langle {{k_{{m_s}}},\theta  + t\rho } \right\rangle } \right)dt}  - 1} \right|\\
	     \leqslant&\; \frac{1}{{{q_{{m_s}}}}}\int_0^{{q_{{m_s}}}} {w\left( {t/{q_{{m_s}}}} \right)\left| {\exp \left( {2\pi i\left\langle {{k_{{m_s}}},\left( {\iota  + t} \right)\rho } \right\rangle } \right) - 1} \right|dt} \\
	     \leqslant&\; \frac{{\widetilde C}}{{{q_{{m_s}}}}}\int_0^{{q_{{m_s}}}} {w\left( {t/{q_{{m_s}}}} \right)\left| {\left\langle {{k_{{m_s}}},\left( {\iota  + t} \right)\rho } \right\rangle } \right|dt} \\
	     \leqslant&\; \frac{{\widetilde C}}{{{q_{{m_s}}}}}\int_0^{{q_{{m_s}}}} {w\left( {t/{q_{{m_s}}}} \right)\left( {\iota  + t} \right)q_{{m_s} + 1}^{ - 1}dt} \\
	  \leqslant &\;\frac{{2\widetilde C}}{{{q_{{m_s}}}}}\int_0^{{q_{{m_s}}}} {w\left( {t/{q_{{m_s}}}} \right){q_{{m_s}}}q_{{m_s} + 1}^{ - 1}dt} \\
	    \leqslant &\;\frac{1}{2}, \quad s \gg 1.
	   \end{align*}
	    Consequently, we still arrive at the estimate 
	    \[{\mathcal{S}_2}\left( {{q_{{m_s}}},\theta } \right) \geqslant \frac{1}{{2\widetilde \Delta \left( {{q_{{m_s}}}} \right)}},\quad s \gg 1.\]
	     The remainder of the argument proceeds exactly as in the discrete case. Combined with \eqref{I-1} and \eqref{I-2}, selecting $T=q_{m_s}$ provides a sequence that concludes the proof of the continuous case.
	\vspace{3mm}

	We now complete the proof of Theorem \ref{OPTT1}.

	\end{proof}

	\subsection{Proof of Corollary \ref{OPTCORO1}: Higher-dimensional case}\label{SECOPTCORO1}

			\begin{proof}
	It suffices to consider irrational rotations on the torus $\mathbb{T}^d$, and we provide the proof only for the discrete case where $l=1$. For almost every $\rho\in\mathbb{T}^d$ with $d\geqslant 2$, there exists a component $\widetilde\rho$ of $\rho$ that is universal in $\mathbb{T}$. Let $\widetilde\theta$ and $\widetilde k$ denote the components of $\theta$ and $k$ corresponding to $\widetilde\rho$, respectively. For $\widetilde\rho$, let $\{q_{m_s}\}_{s\in\mathbb{N}^+}$ be the sequence given in Case \ref{item:roman_I} of Theorem \ref{OPTT1}. We construct the dense set $\bar\Xi=\Xi$ in $\mathbb{T}$ following the same procedure (see Section \ref{PFT1} for details). Next, we define the desired observable as$$\bar\Phi(\theta):=\operatorname{Re}\sum_{k\in\mathbb{Z}^d\setminus\{0\},\;\widetilde k=q_{m_j},\;\text{others}=0}\frac{\exp(2\pi i\langle k,\theta\rangle)}{\widetilde\Delta(\|k\|_{\ell^\infty})}=\operatorname{Re}\sum_{j=1}^\infty\frac{\exp(2\pi iq_{m_j}\widetilde\theta)}{\widetilde\Delta(q_{m_j})},\quad\theta\in\mathbb{T}^d.$$Then, for any initial point $\theta=(0,\ldots,\widetilde\theta,\ldots,0)\in\mathbb{T}^d$ with $\widetilde\theta\in\bar\Xi$, the lower bound on the convergence rate for the weighted Birkhoff average follows from a straightforward application of Theorem \ref{OPTT1}. This completes the proof of Corollary \ref{OPTCORO1}.
		\end{proof}

	\subsection{Proof of Theorem \ref{OPTT2}: Optimality for universal uniform  convergence rates}\label{PFTHI}

		\begin{proof}		
				
				It suffices to consider the case $l=1$; the higher-dimensional case follows by a completely analogous argument. Consequently, the $\ell^\infty$-norm reduces to the standard absolute value.
				
				Note that the counterexample-type statements (i.e., the lower bound estimates for the convergence rates) in all cases follow directly from Theorem \ref{OPTT1} and Corollary \ref{OPTCORO1}; thus, it suffices to analyze the \textit{upper bound} estimates. A further clarification is required regarding the discrepancy in parameter requirements between the upper and lower bounds in Cases \ref{item3:roman_I} and \ref{item3:roman_IV}, where the former is more \textit{restrictive}, as mentioned in Remark \ref{RE37}. This is because the upper bound estimates necessitate the absolute convergence of Fourier coefficients for general observables across the \textit{entire} lattice $k \in \mathbb{Z}^d$. In contrast, for the lower bounds, we only examine specific counterexamples constructed in Theorem \ref{OPTT1} and Corollary \ref{OPTCORO1}. For these constructions, absolute convergence is only required over a \textit{subsequence} of $\mathbb{Z}^d$, which significantly relaxes the constraints. In particular, given the exponential properties of this  subsequence, our requirements in Theorem \ref{OPTT1} are entirely justified.\vspace{3mm}

	\noindent\textbf{{\textit{Proof of Case \ref{item3:roman_I}--\ref{TH3.3-I-1}:}}}\quad  We first prove Case \ref{item3:roman_I}--\ref{TH3.3-I-1}, i.e., $ \widetilde \Delta (x)\sim x^\ell $ with some $ \ell $ depending on $ d $.  It is convenient to first establish the upper bound for the convergence rate when $d \geqslant 2$; the $ d=1 $ case will be treated with similar refinements later. \vspace{2mm}
	
	\noindent{\textit{The  case $d\geqslant 2$:}}\quad We assume that $ \rho \in \mathbb{R}^d $ satisfies the nonresonance condition \eqref{fgz} with some approximation function $ \Delta $ which will be specified later. Since $\ell > d$, we have  $ \sum\nolimits_{k \in {\mathbb{Z}^d} \setminus \left\{ 0 \right\}} {\widetilde\Delta {{\left( {{{\left\| k \right\|}_{{\ell ^\infty }}}} \right)}^{ - 1}}}  <  + \infty  $, which implies
	\[\frac{1}{{{A_N}}}\sum\limits_{n = 0}^{N - 1} {w\left( {n/N} \right)f\left( {\mathscr{T}_\rho ^n\left( \theta  \right)} \right)}  - \int_{{\mathbb{T}^d}} {f\left( x \right) {d}x}  = \frac{1}{{{A_N}}}\sum\limits_{  k \in {\mathbb{Z}^d}\setminus\{0\}} {{{\widehat f}(k)}\sum\limits_{n = 0}^{N - 1} {w\left( {n/N} \right){\exp\left({2\pi i\left\langle {k,\theta  + n\rho } \right\rangle }\right)}} } \]
	for every $ \theta \in \mathbb{T}^d $. Then, similar to the estimate of $ {\mathcal{S}_1}\left( {{q_{{m_s}}},\theta} \right) $ in Section \ref{PFT1}, by setting $ \ell' $ a constant that satisfies $ 1 < \ell ' < {d^{ - 1}}\ell  - 1 $, we obtain
	\begin{align}
		\;&\mathbf{Error}_N^{\mathrm{dis}}(w, f, \rho, \theta)\notag \\
		\leqslant \;& \frac{1}{{{A_N}}}\sum\limits_{  k \in {\mathbb{Z}^d}\setminus\{0\}} {| {{{\widehat f}(k)}} |\left| {\sum\limits_{n = 0}^{N - 1} {w\left( {n/N} \right){\exp\left({2\pi i\left\langle {k,\rho } \right\rangle n}\right)}} } \right|} \notag \\
		\lesssim \;& \frac{{{  }N}}{{{A_N}}}\sum\limits_{  k \in {\mathbb{Z}^d}\setminus\{0\}} {\frac{1}{{{{\left\| k \right\|_{\ell^\infty}^\ell} }}}\left| {\sum\limits_{n \in \mathbb{Z} } {\int_0^1 {w\left( z \right){\exp\left({2\pi Ni\left( {\left\langle {k,\rho } \right\rangle  - n} \right)z}\right)} {d}z} } } \right|} \notag \\
		\lesssim \label{LISANFB4}\;&  \frac{{{1 }}}{{{N^{\left[ {\ell '} \right]}}}}\sum\limits_{  k \in {\mathbb{Z}^d}\setminus\{0\}} {\frac{1}{{{{\left\| k \right\|_{\ell^\infty}^\ell} }}}\sum\limits_{n \in \mathbb{Z} }  {\frac{1}{{{{\left| {\left\langle {k,\rho } \right\rangle  - n} \right|}^{\left[ {\ell '} \right]}}}}\left| {\int_0^1 {{D^{\left[ {\ell '} \right]}w}\left( z \right)\cdot{\exp\left({2\pi Ni\left( {\left\langle {k,\rho } \right\rangle  - n} \right)z}\right)} {d}z} } \right|} }   \\
		\label{LISANFB3} \lesssim \;&  \frac{{{1 }}}{{{N^{\ell '}}}}\sum\limits_{  k \in {\mathbb{Z}^d}\setminus\{0\}} {\frac{1}{{{{\left\| k \right\|_{\ell^\infty}^\ell} }}}\sum\limits_{n \in \mathbb{Z}}  {\frac{1}{{{{\left| {\left\langle {k,\rho } \right\rangle  - n} \right|}^{\ell '}}}}} } \\
		\label{LISANFB2} \lesssim \;& \frac{{{1}}}{{{N^{\ell '}}}}\sum\limits_{  k \in {\mathbb{Z}^d}\setminus\{0\}} {\frac{1}{{{{\left\| k \right\|_{\ell^\infty}^\ell} }}}\left( {\frac{1}{{\left\| {\left\langle {k,\rho } \right\rangle } \right\|_\mathbb{Z}^{\ell '}}} + 2\sum\limits_{n = 1}^\infty  {\frac{1}{{{n^{\ell '}}}}}+2^{\ell'} } \right)} \\
		\label{LISANFB1}  \lesssim \;& \frac{{{1 }}}{{{N^{\ell '}}}}\sum\limits_{  k \in {\mathbb{Z}^d}\setminus\{0\}} {\frac{{\Delta {{\left( {\left\| k \right\|_{\ell^\infty}} \right)}^{\ell '}}}}{{{{\left\| k \right\|_{\ell^\infty}^\ell} }}}} ,\quad  \forall \theta \in \mathbb{T}^d.
	\end{align}
	Here \eqref{LISANFB3} uses the following fact (cf. \cite{Gra14}) with a function  $ \mathscr{W} \in C^\infty(\mathbb{T} )$ to obtain an analogue of fractional type integration by parts:
	\[\left| {\int_0^1 {\mathscr{W}\left( z \right){\exp\left({2\pi i\mathscr{M}z}\right)} {d}z} } \right| = \mathcal{O}\left( {{\mathscr{M}^{ - 1}}} \right) = \mathcal{O}\bigl( {{\mathscr{M}^{ - \left\{ {\ell '} \right\}}}} \bigr), \quad  \mathscr{M}  \to  + \infty ,\]
	and the series $ \sum\nolimits_{n = 1}^{ \infty } {{n^{ - \ell '}}}  $ in \eqref{LISANFB2} is convergent by the choice of the constant $ \ell'>1 $. In comparison to the estimates presented in Section \ref{PFT1}, this approach is more refined and tailored to achieve the desired polynomial convergence rate. Finally, by recalling Remark \ref{fenli} and taking $ \Delta (x)\sim x^{d}(\log x)^2 $, we have that the series in \eqref{LISANFB1} is convergent for almost every $ \rho\in \mathbb{R}^d $, because with $ \ell  - d\ell ' - d > 0 $,  
	\[\sum\limits_{  k \in {\mathbb{Z}^d}\setminus\{0\}} {\frac{{\Delta {{\left( {\left\| k \right\|_{\ell^\infty}} \right)}^{\ell '}}}}{{{{\left\| k \right\|_{\ell^\infty}^\ell} }}}}  \lesssim\sum\limits_{  k \in {\mathbb{Z}^d}\setminus\{0\}} {\frac{{{{\bigl( {{{\left\| k \right\|_{\ell^\infty}^d}}{{\log }^2}{\left\| k \right\|_{\ell^\infty}}} \bigr)}^{\ell '}}}}{{{{{\left\| k \right\|_{\ell^\infty}^\ell}} }}}}  \lesssim\sum\limits_{j =1}^{\infty} {\frac{{{{\left( {\log { j   }} \right)}^{2\ell '}}}}{{{{{ j  }}^{\ell  - d\ell ' - d + 1}}}}}  <  + \infty .\]
	We therefore obtain the $ \mathcal{O}(N^{-\ell'}) $ convergence rate for the weighted Birkhoff average with arbitrarily chosen $ 1<\ell'<d^{-1}\ell-1 $ and $ d \geqslant 2 $. We remark that this necessitates the condition $\ell>2d$ in this case\footnote{While an alternative approach could potentially relax this restriction to $\ell>d$ as previously used, it does not align with the analytical framework of the present paper and is thus omitted.}.  
	\vspace{2mm}
	
	\noindent{\textit{The refined case $d=1$:}}\quad Fortunately, we can obtain a better  polynomial convergence rate for the case $ d=1 $. Note that for arbitrarily given $ \ell>1 $, choosing $1< \ell'<\ell $ is sufficient to guarantee the boundedness of the series $ \sum\nolimits_{n = 1}^\infty  {{n^{ - \ell '}}}  $ in \eqref{LISANFB2}. Consequently, to obtain the $ \mathcal{O}(N^{-\ell'}) $ convergence for any $1 <\ell'<\ell $ in this case, we only need to prove the following for almost every $ \rho \in \mathbb{R} $ again by \eqref{LISANFB2} (as the summation over negative $ k $ is the same):
	\begin{equation}\label{LSZY1}
		\sum\limits_{k = 1}^\infty  {\frac{1}{{{k^\ell }\left\| {\left\langle {k,\rho } \right\rangle } \right\|_\mathbb{Z}^{\ell' }}}}  <  + \infty .
	\end{equation}
	To this end, let us first establish a key estimate.  Let $ \{p_n/q_n\}_{n \in \mathbb{N}^+} $ be the approximants  of $ \rho  \in {\mathbb{T} }\setminus \mathbb{Q}  $ (it suffices to consider this case) and define a function $ F \colon \mathbb{T} \to \mathbb{R}$ as 
	\[F(x):=\left\{ \begin{aligned}
		&{\left( {2{q_n}} \right)^{\ell' }},\quad &{\left\| x \right\|_\mathbb{Z}} \leqslant {\left( {2{q_n}} \right)^{ - 1}},  \hfill \\
		&\left\| x \right\|_\mathbb{Z}^{ - \ell' },\quad &{\left\| x \right\|_\mathbb{Z}} > {\left( {2{q_n}} \right)^{ - 1}}. \hfill \\
	\end{aligned}  \right.\]
	In view of \eqref{liangce} and \eqref{liangce22}, we have $ \left\| {\left\langle {k,\rho } \right\rangle } \right\| > {\left( {2{q_n}} \right)^{ - 1}} $ for all $ 0 < k < {q_n} $. Then, using the  Denjoy--Koksma inequality in Lemma \ref{DENJOYKOKSMA}, we obtain 
	\begin{align}\label{LSZY22}
		\sum\limits_{ k =1}^{q_n -1} {\frac{1}{{\left\| {\left\langle {k,\rho } \right\rangle } \right\|_\mathbb{Z}^{\ell' }}}} & = \sum\limits_{k = 1}^{{q_n} - 1} {F\left( {\left\langle {k,\rho } \right\rangle } \right)}  \leqslant \left| {F\left( 0 \right)} \right| + {q_n}\left| {\int_{{\mathbb{T} }} {F\left( x \right) {d}x} } \right| + \operatorname{Var}\left( F \right) \notag \\
		&= \mathcal{O}( {q_n^{\ell' }} )+\mathcal{O}( {q_n^{\ell' }} )+\mathcal{O}( {q_n^{\ell' }} ) \lesssim {q_n^{\ell '}} ,  \quad  n \gg 1.
	\end{align}
	This conclusion is analogous to    \cite[Lemma 3.2]{deF22} (see also    \cite[Lemma 2.5]{ALMN95} and   \cite[Lemma 1]{LM94}, and we also mention \cite{Rus76} for the continuous  case). Returning to the series in \eqref{LSZY1}, Abel's summation formula gives
	\begin{equation}\label{ZYLS3}
		\sum\limits_{k = 1}^{{q_L} - 1} {\frac{1}{{{k^\ell }\left\| {\left\langle {k,\rho } \right\rangle } \right\|_\mathbb{Z}^{\ell '}}}}  = \frac{1}{{{{\left( {{q_L} - 1} \right)}^\ell }}}\sum\limits_{k = 1}^{{q_L} - 1} {\frac{1}{{\left\| {\left\langle {k,\rho } \right\rangle } \right\|_\mathbb{Z}^{\ell' }}}}  + \sum\limits_{j = 1}^{{q_L} - 1} {\left( {\sum\limits_{k = 1}^j {\frac{1}{{\left\| {\left\langle {k,\rho } \right\rangle } \right\|_\mathbb{Z}^{\ell' }}}} } \right)\left(\frac{1}{{{j^\ell }}}- {\frac{1}{{{{\left( {j + 1} \right)}^\ell }}} } \right)}
	\end{equation}
	for $ L \in \mathbb{N}^+$ sufficiently large. Then, with \eqref{LSZY22}, we have 
	\[\frac{1}{{{{\left( {{q_L} - 1} \right)}^\ell }}}\sum\limits_{k = 1}^{{q_L} - 1} {\frac{1}{{\left\| {\left\langle {k,\rho } \right\rangle } \right\|_\mathbb{Z}^{\ell'} }}}  \lesssim {\frac{{q_L^{\ell'} }}{{{{\left( {{q_L} - 1} \right)}^\ell }}}}   \ll 1, \quad  L\gg 1,\]
	and
	\begin{align*}
	 \sum\limits_{j = 1}^{{q_L} - 1} {\left( {\sum\limits_{k = 1}^j {\frac{1}{{\left\| {\left\langle {k,\rho } \right\rangle } \right\|_\mathbb{Z}^{\ell'} }}} } \right)\left(\frac{1}{{{j^\ell }}}- {\frac{1}{{{{\left( {j + 1} \right)}^\ell }}} } \right)}&
		\lesssim   {\sum\limits_{v = 1}^L {\sum\limits_{j={q_{v - 1}}      }^{{q_v}-1} {\left( {\sum\limits_{k = 1}^j {\frac{1}{{\left\| {\left\langle {k,\rho } \right\rangle } \right\|_\mathbb{Z}^{\ell'} }}} } \right)\left(\frac{1}{{{j^\ell }}}- {\frac{1}{{{{\left( {j + 1} \right)}^\ell }}} } \right)} } } \\  
	&\lesssim    {\sum\limits_{v = 1}^L {\sum\limits_{j={q_{v - 1}}      }^{{q_v}-1} {q_v^{\ell'} \left(\frac{1}{{{j^\ell }}}- {\frac{1}{{{{\left( {j + 1} \right)}^\ell }}} } \right)} } }   \\
	&	\lesssim  {\sum\limits_{v = 1}^\infty  {\frac{{q_{v + 1}^{\ell'} }}{{q_v^\ell }}} }  , \quad  L\gg 1,
	\end{align*}
	hence the convergence in \eqref{LSZY1} will be ensured by \eqref{ZYLS3} if we prove $ \sum\nolimits_{v = 1}^\infty  {q_v^{ - \ell }q_{v + 1}^{\ell'} } <+\infty $. Recalling Remark \ref{fenli}, we still set the approximation function $ \Delta $ for almost every $ \rho \in \mathbb{R} $ as $ 	\Delta \left( x \right) = {x}{{\log }^2}\left( {1 + x} \right) $. Now, by \eqref{liangce} and \eqref{fgz}, we have
	\[\frac{1}{{{q_{v + 1}}}} > \left| {{q_v}\rho  - {p_v}} \right| = {\left\| {{q_v}\rho } \right\|_\mathbb{Z}} \geqslant \frac{\gamma }{{{q_v}\log^{2} \left( {1 + {q_v}} \right)}}.\]
	Then, with the exponential properties of $ \{q_n\}_{n \in \mathbb{N}^+} $, we obtain the desired boundedness as
	\[\sum\limits_{v = 1}^\infty  {\frac{{q_{v + 1}^{\ell '}}}{{q_v^\ell }}}  \lesssim {\sum\limits_{v = 1}^\infty  {\frac{{{{\left( {{q_v}\log^2 {q_v}} \right)}^{\ell '}}}}{{q_v^\ell }}} }   \lesssim {\sum\limits_{v = 1}^\infty  {\frac{1}{{q_v^{\left( {\ell  - \ell '} \right)/2}}}} }  \lesssim {\sum\limits_{v = 1}^\infty  {\frac{1}{{{{({2^{\left( {\ell  - \ell '} \right)/4}})}^v}}}} }   <+\infty.\]
	This shows the convergence rate in the $ d=1 $ case is $ \mathcal{O}(N^{-\ell'}) $ for any $ 1<\ell'<\ell $, as promised. As can be observed, a crucial aspect of the analysis above is the utilization of the Denjoy--Koksma inequality. We should emphasize that this  inequality is only valid in the $ 1 $-dimensional case, as shown in \cite[Appendix 1, p. 215]{Yoc95}; this explains why the result for $d \geqslant 2$ is relatively weaker than that for $d=1$.

	On the other hand, it has been shown in Theorem \ref{OPTT1} and Corollary \ref{OPTCORO1} that in general, the uniform convergence rate cannot exceed $ \mathcal{O}( \widetilde \Delta (N)^{-1} )=\mathcal{O}( N^{-\ell} )  $ under the weighting function $w  \in \mathcal{W}_{\beta}$ for almost every  rotation and some specific observable. But, it remains to show such a result also holds for the weighting function $w \in C_0^M([0,1]) $, where $M\geqslant \max\{\ell,2\} $, see Condition \ref{item:num_1} of Definition \ref{generalwei}. To this end, let us revisit the proof of Theorem \ref{OPTT1}. Indeed, we only use the asymptotic behavior in Condition \ref{item:num_2} of Definition \ref{generalwei} for $w  \in \mathcal{W}_{\beta}$ in the analysis of $ 	{\mathcal{S}_1}\left( {{q_{{m_s}}},\theta} \right) $.  The  assumption $ \widetilde \Delta \left( x \right) \leqslant {\exp({x^\zeta })} $ with $ 0<\zeta <\beta^{-1} $, is also only used here, namely to ensure that assumption \ref{item:c} holds    to construct the desired increasing positive integer sequence  $ {\left\{ {{m_s}} \right\}_{s \in {\mathbb{N}^ + }}} $. However, when considering Case \ref{item3:roman_I}--\ref{TH3.3-I-1} (i.e., $ \widetilde\Delta(x)\sim x^\ell $  with  $ \ell>0 $) with  $  {w} \in  C_0^M([0,1]) $, we can set $ M_j=M$ there for all  $M\geqslant \max\{\ell,2\} $ ($ M=+\infty $ is  evident, we therefore only discuss the finite case), i.e., $ M_j $ are fixed for all $ 1 \leqslant j \leqslant s-1 $. In this case, replacing  assumption \ref{item:c} by the following \ref{item:d} (this is indeed achievable due to $M > \ell$ and $ q_{m_s} \to +\infty$ as $ m_s \to +\infty $)
	\begin{enumerate}[label=(\alph*)] 
		\setcounter{enumi}{3} 
		\item \label{item:d} 
		$ \sum\limits_{j = 1}^{s - 1} {\frac{1}{{\widetilde \Delta \left( {{q_{{m_j}}}} \right)}} \cdot \frac{{q_{{m_j} + 1}^M}}{{q_{{m_s}}^M}}}  \ll \frac{1}{{\widetilde \Delta \left( {{q_{{m_s}}}} \right)}},  \quad \forall  s\gg 1, $
	\end{enumerate}
	one constructs the desired sequence $ \{m_{s}\}_{s\in \mathbb{N}^+} $, and then derives the estimate
	\[\left| {{\mathcal{S}_1}\left( {{q_{{m_s}}},\theta} \right)} \right|\leqslant \frac{1}{8{\widetilde \Delta \left( {{q_{{m_s}}}} \right)}}, \quad  s\gg 1, \]
	as done in \eqref{S19}. Consequently, we obtain the lower bound convergence rate $ \mathcal{O}(\widetilde{\Delta}(N)^{-1})=\mathcal{O}(N^{-\ell}) $ as in Theorem \ref{OPTT1} for $  {w} \in  C_0^M([0,1]) $. Now, following the same arguments in Section \ref{SECOPTCORO1}, we obtain a similar conclusion for Corollary \ref{OPTCORO1} as well, which provides the lower bound convergence rate for the general case on $ \mathbb{T}^d $. Moreover, it is evident that $ M>\max\{\ell,2\} $ is sufficient to achieve the upper bound for the convergence rate, see \eqref{LISANFB4} for details. 
	
	This completes the proof of Case \ref{item3:roman_I}--\ref{TH3.3-I-1}. \vspace{3mm}
	
	\noindent\textbf{{\textit{Proof of Case \ref{item3:roman_I}--\ref{TH3.3-I-2}:}}} \quad  We next prove Case \ref{item3:roman_I}--\ref{TH3.3-I-2}, where $\widetilde{\Delta}(x) \sim x^\ell$ with $\ell > d$. Since the dimension significantly affects the results in this context, we discuss the cases separately below, building upon the framework of Theorem \ref{OPTT1} and the preceding Case \ref{item3:roman_I}--\ref{TH3.3-I-1}.\vspace*{2mm}
	
	\noindent{\textit{The  case $d=1$:}}\quad  It has been demonstrated in \cite[Theorem 3.4]{TL24a}  that the continuous case with $ d=1 $ exhibits  exponential convergence qualitatively,  owing to the absence of small divisors. This marks a fundamental distinction from the discrete case. Regarding the quantitative result here, by assuming $ \rho \notin \mathbb{Q} $ and choosing an integer $M' \geqslant 2$ such that $M' = \mathcal{O}^\#(T^{\beta^{-1}})$ for $T$ sufficiently large, we establish the desired conclusion:
	\begin{align}
		\mathbf{Error}_T^{\mathrm{con}}(w, f, \rho, \theta)   &\lesssim\sum\limits_{k \in \mathbb{Z} \setminus \left\{ 0 \right\}} {\frac{1}{{\widetilde\Delta \left( {{{\left| k \right|}}} \right)}}\frac{{{{ \| {{D^{M'}}w}  \|}_{{L^1}\left( {0,1} \right)}}}}{{{{\left| {2\pi k\rho T} \right|}^{M'}}}}}  \notag \\
		& \lesssim\mathop {\sup }\limits_{2 \leqslant M' \in {\mathbb{N}^ + }} \frac{{{\lambda ^{M'}}{{\left( {M'} \right)}^{\beta M'}}}}{{{{\left| {2\pi \rho T} \right|}^{M'}}}},\notag \\
		& =o\bigl(\exp(-T^{\varrho})\bigr),\quad \forall \theta \in \mathbb{T}\notag 
	\end{align}
	for any $ 0<\varrho <\beta^{-1} $. The    case $d=1$ is thus proved.
	\vspace{2mm}

	\noindent{\textit{The  case $d\geqslant 2$:}}\quad Next, we address the  case $ d \geqslant 2 $.	Note that in the proof of Case \ref{item3:roman_I}--\ref{TH3.3-I-1}, we have derived a refined small-divisor estimate for the discrete $ d=1 $ case by utilizing the Denjoy--Koksma inequality; i.e., for $ 1<\ell'<\ell $ and almost every $ \rho\in \mathbb{R} $:
	\[\sum\limits_{k = 1}^\infty  {\frac{1}{{{k^\ell }\left\| {\left\langle {k,\rho } \right\rangle } \right\|_\mathbb{Z}^{\ell' }}}}  <  + \infty ,\]
	which leads to the convergence rate $ \mathcal{O}(N^{-\ell'}) $. In a similar way (see \eqref{LISANFB2}), we only need to show that for the continuous case with $ d \geqslant 2 $, under  the assumption that $\ell>d $ and $ 1 < \ell ' < {\left( {d - 1} \right)^{ - 1}}\ell  $, the following holds for almost every $ \rho \in \mathbb{R}^d $:
	\begin{equation}\label{LXZY1}
		\sum\limits_{  k \in {\mathbb{Z}^d}\setminus\{0\}} {\frac{1}{{{{{{{\left\| k \right\|}_{{\ell ^\infty }}^\ell}}} }\left| {\left\langle {k,\rho } \right\rangle } \right|^{\ell '}}}}  <  + \infty .
	\end{equation}
	Once this is established, we directly obtain the convergence rate $\mathcal{O}(T^{-\ell'})$ for the continuous version of the weighted Birkhoff average with $d \geqslant 2$.

	In what follows, we introduce a novel approach inspired by \cite{Sal04} to estimate \eqref{LXZY1}\footnote{\cite{Sal04} dealt with homological equations arising from KAM theory, namely $ \sum\nolimits_{  k \in {\mathbb{Z}^d}\setminus\{0\}} {{{\left| {\left\langle {k,\omega } \right\rangle } \right|}^{ - 1}}{\exp\left({ - \epsilon {{{\left\| k \right\|}_{{\ell ^\infty }}}} }\right)}}  $ for a given Diophantine vector $ \omega \in \mathbb{R}^d $ and some $ 0<\epsilon\ll 1 $, which is fundamentally different from our setting.}.   The key observation is that only a few of the divisors $ \left| {\left\langle {k,\rho } \right\rangle } \right| $ are actually small; i.e., near-resonances are relatively rare. Hence, we do not need to replace all ${\left| {\left\langle {k,\rho } \right\rangle } \right|^{ - 1}}$ with $\Delta \left( {{{\left\| k \right\|}_{{\ell ^\infty }}}} \right)$ as in \eqref{LISANFB1}. By Remark \ref{fenli}, we set the nonresonance condition \eqref{fgz2} for almost every $ \rho \in \mathbb{R}^d $ as 
	\begin{equation}\label{LXZY999}
		| {\left\langle {k,\rho } \right\rangle  } | \geqslant \frac{\gamma }{{{{{\left\| k \right\|}_{{\ell ^\infty }}^{d - 1}}}}{\log ^2}\left( {1 + {{{\left\| k \right\|}_{{\ell ^\infty }}}}} \right)}, \quad \forall   k \in {\mathbb{Z}^d}\setminus\{0\}.
	\end{equation}
	We now proceed with the analysis by distinguishing between cases \ref{item:A2} and \ref{item:B2}.
	\begin{enumerate}[label=(\Alph*), wide=0pt, listparindent=\parindent] 
		\item    \label{item:A2}	 We first claim that terms in \eqref{LXZY1} with $ \left| {\left\langle {k,\rho } \right\rangle } \right| \geqslant ( {2{{\log }^2}2} )^{-1}\gamma  $ are well-controlled due to the absence of small divisors. Indeed, the following holds since $ \ell>d $:
		\[\sum\limits_{  k \in {\mathbb{Z}^d}\setminus\{0\},\;\left| {\left\langle {k,\rho } \right\rangle } \right| \geqslant \frac{\gamma }{{2{{\log }^2}2}}} {\frac{1}{{{{{{\left\| k \right\|}_{\ell ^\infty }^\ell}} }{{\left| {\left\langle {k,\rho } \right\rangle } \right|}^{\ell '}}}}}  \leqslant {\left( {\frac{{2{{\log }^2}2}}{\gamma }} \right)^{\ell '}}\sum\limits_{  k \in {\mathbb{Z}^d}\setminus\{0\}} {\frac{1}{{{{\left\| k \right\|}_{\ell ^\infty }^\ell }}}}  \lesssim {\sum\limits_{j = 1}^\infty  {\frac{1}{{j^{\ell-d+1}}}} }   <+\infty.\]
		
		\item    \label{item:B2} It remains to deal with terms with $ \left| {\left\langle {k,\rho } \right\rangle } \right| < ( {2{{\log }^2}2} )^{-1}\gamma$, where small divisors appear. For fixed $ \nu \in \mathbb{N}^+ $ and $ y>0 $, define the set
		\[\mathscr{S}\left( {\nu ,y} \right): = \left\{ {k \in {\mathbb{Z}^d}:\quad \frac{1}{{{2^{\nu  + 1}}}} \cdot \frac{{\gamma }}{{{{\log }^2}2}} \leqslant \left| {\left\langle {k,\rho } \right\rangle } \right| < \frac{1}{{{2^\nu }}} \cdot \frac{{\gamma }}{{{{\log }^2}2}},\;0 < \| k \|_{\ell^\infty} \leqslant y} \right\}.\]
		We first estimate the cardinality of $ \mathscr{S}\left( {\nu ,y} \right) $. Without loss of generality, assume $ \left| {{\rho _j}} \right| \leqslant \left| {{\rho _d}} \right| $ for all $ 1 \leqslant j \leqslant d $. Then, by taking $ k = \left( {0,  \ldots  ,0,1} \right) $, we obtain $ \left| {{\rho _d}} \right| \geqslant ({\log ^2}2)^{-1} \gamma    $ via  \eqref{LXZY999}. Denote by $ \bar k  = \left( {{k_1}, \ldots ,{k_{d - 1}}} \right) $ the projection of $k$ onto the first $d-1$ coordinates. We assert that $ \bar k  \ne \bar {k'}  $ whenever $ k,k' \in \mathscr{S}\left( {\nu ,y} \right) $ and $ k \ne k' $. Otherwise, we reach a contradiction:
		\begin{equation}\notag
			\left\{ \begin{gathered}
				\left| {\left\langle {k - k',\rho } \right\rangle } \right| \leqslant \left| {\left\langle {k,\rho } \right\rangle } \right| + \left| {\left\langle {k',\rho } \right\rangle } \right| < \frac{2}{{{2^\nu }}} \cdot \frac{\gamma }{{{{\log }^2}2}} \leqslant \frac{\gamma }{{{{\log }^2}2}},  \hfill \\
				\left| {\left\langle {k - k',\rho } \right\rangle } \right| = \left| k_d-k_d^\prime \right|\left| {{\rho _d}} \right| \geqslant \left| {{\rho _d}} \right| \geqslant \frac{\gamma }{{{{\log }^2}2}} .  \hfill \\
			\end{gathered}  \right.
		\end{equation}
		For a fixed $k$ with $ \bar k \ne 0 $, let us choose $ k_d^* $ so as to minimize $ {\left| {\left\langle {k,\rho } \right\rangle } \right|} $, namely $ \left| {\left\langle {{k^ * },\rho } \right\rangle } \right| = {\min _k}\left| {\left\langle {k,\rho } \right\rangle } \right| $ with the intermediate quantity $ {k^ * } = ( {\bar k,k_d^ * } ) $. It is evident that $ k_d^ *  = [ { - ( {\sum\nolimits_{j = 1}^{d - 1} {{k_j}{\rho _j}} } )}{\rho _d^{-1}} ] $, hence 
		\[\left| {k_d^ * } \right| \leqslant 1 + \sum\limits_{j = 1}^{d - 1} {\left| {{k_j}} \right|\left| {{\rho _j}{\rho _d^{-1}}} \right|}  \leqslant 1 + \sum\limits_{j = 1}^{d - 1} {\left| {{k_j}} \right|}  \leqslant d \| {\bar k}  \|_{\ell^\infty},\]
		which leads to 
		\begin{equation}\label{LXZY33}
			\left\| {{k^ * }} \right\|_{\ell^\infty} \leqslant \sup \left\{ \| {\bar k}  \|_{\ell^\infty} , \left\| {k_d^ * } \right\|_{\ell^\infty}\right\} \leqslant d \| {\bar k}  \|_{\ell^\infty}.
		\end{equation}
		From the definition of $ k^* $ and  \eqref{LXZY33}, it follows that for every $k$ with $ \bar k \ne 0 $,
		\begin{equation}\notag
			\left| {\left\langle {k,\rho } \right\rangle } \right| \geqslant \left| {\left\langle {{k^*},\rho } \right\rangle } \right| \geqslant \frac{\gamma }{{\left\| {{k^*}} \right\|_{{\ell ^\infty }}^{d - 1}{{\log }^2}\left( {1 + {{\left\| {{k^*}} \right\|}_{{\ell ^\infty }}}} \right)}} \geqslant \frac{\gamma }{{{{\left( {d{{ \| {\bar k}  \|}_{{\ell ^\infty }}}} \right)}^{d - 1}}{{\log }^2}\left( {1 + d{{ \| {\bar k}  \|}_{{\ell ^\infty }}}} \right)}},
		\end{equation}
		implying
		\begin{equation}\label{LXZY333}
			\| {\bar k} \|_{\ell^\infty}  \gtrsim  \frac{{{1 }}}{{{{\left| {\left\langle {k,\rho } \right\rangle {{\log }^2}\left| {\left\langle {k,\rho } \right\rangle } \right|} \right|}^{(d-1)^{-1} }}}}.
		\end{equation}
		Since elements in $\mathscr{S}\left( {\nu ,y} \right)$ possess distinct projections $\bar{k}$, it follows from \eqref{LXZY333} that for $ k, k ^\prime \in \mathscr{S}\left( {\nu ,y} \right)  $ with $  k \ne  k ^\prime $,
		\[ \| {\bar k - \bar k'}  \|_{\ell^\infty} \gtrsim \frac{{{1 }}}{{{{\left| {\left\langle {k - k',\rho } \right\rangle {{\log }^2}\left| {\left\langle {k - k',\rho } \right\rangle } \right|} \right|}^{(d-1)^{-1}}}}} \gtrsim{\left( {\frac{{{2^\nu }}}{{{\nu ^2}}}} \right)^{(d-1)^{-1}}}.\]
		Consequently, the cardinality of $\mathscr{S}\left( {\nu ,y} \right)$ can be estimated as
		\begin{equation}\label{LXZY34}
			{\mathrm{Card}} \mathscr{S}\left( {\nu ,y} \right) \lesssim{\nu ^2}{2^{ - \nu }}{y^{d - 1}}.
		\end{equation}
		Let $ \mathscr{S}\left( y \right) := \bigcup_{\nu\in \mathbb{N}^+}  {\mathscr{S}\left( {\nu ,y} \right)}  $. With these preparations, we estimate the remaining terms in \eqref{LXZY1} as
		\begin{align}
			\sum\limits_{  k \in {\mathbb{Z}^d}\setminus\{0\},\;\left| {\left\langle {k,\rho } \right\rangle } \right| < \frac{\gamma }{{2{{\log }^2}2}}} {\frac{1}{{{{\| k \|}^\ell_{\ell^\infty} }{{\left| {\left\langle {k,\rho } \right\rangle } \right|}^{\ell '}}}}}  &= \sum\limits_{j = 1}^\infty  {\sum\limits_{  k \in {\mathbb{Z}^d}\setminus\{0\},\;\left| {\left\langle {k,\rho } \right\rangle } \right| < \frac{\gamma }{{2{{\log }^2}2}},\;{\| k \|_{\ell^\infty}} = j} {\frac{1}{{{j^\ell }{{\left| {\left\langle {k,\rho } \right\rangle } \right|}^{\ell '}}}}} } \notag \\
			& = \sum\limits_{j = 1}^\infty  {\sum\limits_{k \in \mathscr{S}\left( j \right)} {\frac{1}{{{{\left| {\left\langle {k,\rho } \right\rangle } \right|}^{\ell '}}}}\left( {\frac{1}{{{j^\ell }}} - \frac{1}{{{{\left( {j + 1} \right)}^\ell }}}} \right)} } \notag \\
			& = \sum\limits_{j = 1}^\infty  {\left( {\frac{1}{{{j^\ell }}} - \frac{1}{{{{\left( {j + 1} \right)}^\ell }}}} \right)\sum\limits_{\nu  = 1}^\infty  {\sum\limits_{k \in \mathscr{S}\left( {\nu ,j} \right)} {\frac{1}{{{{\left| {\left\langle {k,\rho } \right\rangle } \right|}^{\ell '}}}}} } } \notag \\
			\label{LXZY16}& \leqslant \sum\limits_{j = 1}^\infty  {\left( {\frac{1}{{{j^\ell }}} - \frac{1}{{{{\left( {j + 1} \right)}^\ell }}}} \right)\sum\limits_{\nu  = 1}^\infty  {{{\left( {\frac{{2{{\log }^2}2}}{\gamma }} \right)}^{\ell '}}{2^{\nu \ell '}} \cdot 	{\mathrm{Card}} \mathscr{S}\left( {\nu ,j} \right)} } \\
			\label{LXZY17}& \lesssim\sum\limits_{j = 1}^\infty  {\left( {\frac{1}{{{j^\ell }}} - \frac{1}{{{{\left( {j + 1} \right)}^\ell }}}} \right){j^{d - 1}}\sum\limits_{{2^\nu }{{\log }^2}2 \leqslant {j^{d - 1}}{{\log }^2}\left( {1 + j} \right)} {{\nu ^2}{2^{\nu \left( {\ell ' - 1} \right)}}} } \\
			\label{LXZY18}& \lesssim\sum\limits_{j = 1}^\infty  {\left( {\frac{1}{{{j^\ell }}} - \frac{1}{{{{\left( {j + 1} \right)}^\ell }}}} \right){j^{d - 1}}{{\log }^2}j{{\left( {{j^{d - 1}}{{\log }^2}j} \right)}^{ {\ell ' - 1}  }}} \\
			\label{LXZY19}& \lesssim\sum\limits_{j = 1}^\infty  {\frac{{{{\left( {\log j} \right)}^{2\ell '}}}}{{{j^{\ell  - \left( {d - 1} \right)\ell ' + 1}}}}} ,
		\end{align}
		where \eqref{LXZY16} follows from the definition of $ {\mathscr{S}\left( {\nu ,j} \right)} $, and \eqref{LXZY17} utilizes \eqref{LXZY34} and the fact that for $ {k \in \mathscr{S}\left( {\nu ,j} \right)} $, ${{2^\nu }{{\log }^2}2 \leqslant {j^{d - 1}}{{\log }^2}\left( {1 + j} \right)}$ since
		\[\frac{\gamma }{{{j^{d - 1}}{{\log }^2}\left( {1 + j} \right)}} \leqslant \frac{\gamma }{{{{\| k \|}^{d - 1}_{\ell^\infty}}{{\log }^2}\left( {1 + \| k \|_{\ell^\infty}} \right)}} \leqslant \left| {\left\langle {k,\rho } \right\rangle } \right| \leqslant \frac{1}{{{2^\nu }}} \cdot \frac{\gamma }{{{{\log }^2}2}}.\]
		Inequality \eqref{LXZY18} follows from the summation of the geometric-like series (note $\ell'>1$). Finally, \eqref{LXZY19} is bounded due to our choice of $\ell' < (d-1)^{-1}\ell$.
		
		Combining the estimates in \ref{item:A2} and \ref{item:B2}, we establish \eqref{LXZY1}, thereby obtaining the desired convergence rate $\mathcal{O}(T^{-\ell'})$ for the case $d\geqslant 2$.	
	\end{enumerate}	
	
	This completes the proof of Case \ref{item3:roman_I}--\ref{TH3.3-I-2}. 
	\vspace{3mm}

	\noindent\textbf{{\textit{Proof of Case \ref{item3:roman_I}--\ref{TH3.3-I-3}:}}}\quad  In this context, we have $ \widetilde \Delta \left( x \right) \sim {x^\ell }{\left( {\log x} \right)^\eta } $  with $ \ell  \geqslant  2 $ and $ \eta  > 2\ell  + 1 $,   yielding $ \sum\nolimits_{k \in {\mathbb{Z}^d} \setminus \left\{ 0 \right\}} {\widetilde\Delta {{\left( {{{\left\| k \right\|}_{{\ell ^\infty }}}} \right)}^{ - 1}}}  <  + \infty  $. The proof follows from the analysis in Case \ref{item3:roman_I}--\ref{TH3.3-I-2} with $ d \geqslant 2 $.   Under the above setting (especially for the nonresonance condition), it suffices to show that $\ell'$ can reach $\ell$; that is, the condition
	\begin{equation}\label{OPT41}
		\sum\limits_{  k \in {\mathbb{Z}^2}\setminus\{0\}} {\frac{1}{{{{\left\| k \right\|}^\ell_{\ell^\infty} }{{  {\log^\eta (1 + \left\| k \right\|_{\ell^\infty})}  } }{{\left| {\left\langle {k,\rho } \right\rangle } \right|}^\ell }}}}  <  + \infty
	\end{equation}
	guarantees a convergence rate of $ \mathcal{O}(T^{-\ell}) $. Similar to \ref{item:A2}, with $ \ell  \geqslant d=2 $ and $ \eta  > 2\ell  + 1 > 1 $, we obtain
	\begin{align*}
		\sum\limits_{  k \in {\mathbb{Z}^2}\setminus\{0\},\;\left| {\left\langle {k,\rho } \right\rangle } \right| \geqslant \frac{\gamma }{{2{{\log }^2}2}}} {\frac{1}{{{{\left\| k \right\|}^\ell_{\ell^\infty} }{{  {\log^\eta (1 + \left\| k \right\|_{\ell^\infty})}  } }{{\left| {\left\langle {k,\rho } \right\rangle } \right|}^\ell }}}}  &\lesssim {\sum\limits_{  k \in {\mathbb{Z}^2}\setminus\{0\}} {\frac{1}{{{{\left\| k \right\|}^\ell_{\ell^\infty} }{{  {\log^\eta (1 + \left\| k \right\|_{\ell^\infty})}  }  }}}} }  \\
		&   \lesssim  {\sum\limits_{j = 2}^\infty  {\frac{1}{{j^{\ell-1}{{\left( {\log j} \right)}^{\eta }}}}} }  <+\infty.
	\end{align*}
	Following the framework of \ref{item:B2} (see \eqref{LXZY18}), the estimate\footnote{Without loss of generality, we may assume that $ \widetilde\Delta $ is differentiable.} 
	\[	\sum\limits_{j = 1}^\infty  {\left( {\frac{1}{{\widetilde \Delta \left( j \right)}} - \frac{1}{{\widetilde \Delta \left( {j + 1} \right)}}} \right){{\left( {j{{\log }^2}j} \right)}^\ell }}   \lesssim {\sum\limits_{j = 1}^\infty  {\frac{{D\widetilde \Delta \left( j \right)}}{{{{\widetilde \Delta }^2}\left( j \right)}}{{\left( {j{{\log }^2}j} \right)}^\ell }} }    \lesssim {\sum\limits_{j = 2}^\infty  {\frac{1}{{j{{\left( {\log j} \right)}^{\eta  - 2\ell }}}}} }  <+\infty\]
	is sufficient to show that the remainder in \eqref{OPT41} is also bounded. Here we have used the fact that $D\widetilde \Delta(x) = \mathcal{O}(x^{\ell - 1} \log^\eta x)$ as $x \to +\infty$ and $\eta > 2\ell + 1$. This completes the proof of Case \ref{item3:roman_I}--\ref{TH3.3-I-3}.
	\vspace{3mm}

	\noindent\textbf{{\textit{Proof of Case \ref{item3:roman_II}:}}}\quad 
	It suffices to observe that the integrability condition \eqref{keji} in Theorem \ref{OPTYINDL} holds for any given integer $m \geqslant 2$, provided that $\widetilde{\Delta}(x) \gg x^L$  for arbitrarily large $L > 0$. This implies uniform convergence rates of $\mathcal{O}(N^{-m})$ and $\mathcal{O}(T^{-m})$ for any arbitrary polynomial order. However, the control coefficient may diverge to $+\infty$ as $m \to +\infty$; for further details, refer to \cite[Section 1.1.4]{TL26a}. 
	This completes the proof of Case \ref{item3:roman_II}.
	\vspace{3mm}
	
	\noindent\textbf{{\textit{Proofs of Cases \ref{item3:roman_III} and \ref{item3:roman_IV}:}}}\quad Although the approaches for Cases \ref{item3:roman_III} and \ref{item3:roman_IV} differ from the preceding ones, their proofs are analogous and proceed along the lines of the argument for Theorem \ref{OPTT1}. We only present the discrete setting, as the continuous case is much simpler.
	
We first prove Case \ref{item3:roman_III}.	In this case, we have $\widetilde{\Delta}(x) \sim \exp((\log x)^{\lambda})$ with $\lambda > 1$.  For almost every  $ \rho \in \mathbb{R}^d $, we set the approximation function $ \Delta $  as $ 	\Delta \left( x \right) = {x^{d }}{{\log }^2}\left( {1 + x} \right) $; specifically, the following nonresonance condition is satisfied (where we retain the notation $\Delta$ for brevity):
	\[{\left\| {\left\langle {k,\rho } \right\rangle } \right\|_\mathbb{Z}} > \frac{\gamma }{{\Delta \left( {\left\| k \right\|_{\ell^\infty}} \right)}}, \quad \forall   k \in {\mathbb{Z}^d}\setminus\{0\}.\]
	It then follows that
	\begin{equation}\label{J1J2}
		\mathbf{Error}_N^{\mathrm{dis}}\left(w,f,\rho,\theta\right)   \lesssim {\mathcal{J}_1}\left( {N,\theta } \right) + {\mathcal{J}_2}\left( {N,\theta } \right) ,\quad\forall \theta \in \mathbb{T}^d,
	\end{equation}
	where
	\begin{align*}
		{\mathcal{J}_1}\left( {N,\theta } \right) &:= \sum\limits_{0 < \left\| k \right\|_{\ell^\infty} \leqslant \varphi \left( N \right)} {\frac{1}{{\widetilde\Delta \left( {{{\left\| k \right\|}_{\ell^\infty} }} \right)}}\frac{1}{{{A_N}}}\left|\sum\limits_{n = 0}^{N - 1} {w\left( {n/N} \right){\exp\left({2\pi i\left\langle {k,\rho } \right\rangle n}\right)} \cdot {\exp\left({2\pi i\left\langle {k,\theta } \right\rangle }\right)}}\right| }, \\
		{\mathcal{J}_2}\left( {N,\theta } \right) &:= \sum\limits_{\left\| k \right\|_{\ell^\infty} > \varphi \left( N \right)} {\frac{1}{{\widetilde\Delta \left( {{{\left\| k \right\|}_{\ell^\infty} }} \right)}}\frac{1}{{{A_N}}}\left|\sum\limits_{n = 0}^{N - 1} {w\left( {n/N} \right){\exp\left({2\pi i\left\langle {k,\rho } \right\rangle n}\right)} \cdot {\exp\left({2\pi i\left\langle {k,\theta } \right\rangle }\right)}}\right| } ,
	\end{align*}
	and the function $ \varphi \left( N \right) $ for $ N\gg 1 $ is chosen as 
	\begin{equation}\label{fain2}
	\varphi \left( N \right): = {N^{{d^{ - 1}}}}{\left( {\log N} \right)^{{d^{ - 1}}\left( {\beta \left( {1 + \lambda } \right) + 2} \right)}}.
	\end{equation}
 With \eqref{fain2}, we derive
	\begin{equation}\label{gujie12}
		\exp \left( { - {C_2}{{\left( {\frac{N}{{\Delta \circ \varphi \left( N \right)}}} \right)}^{{\beta ^{ - 1}}}}} \right)\lesssim {\exp \left( - {C_3 \left(\log N\right)^{\lambda} }\right)} ,\quad \forall 0 < {C_{2} } < {e^{ - 1}}\beta {\left( {2\pi \gamma } \right)^{{\beta ^{ - 1}}}},
	\end{equation}
	and
	\begin{equation}\label{gujie222}
  \varphi {\left( N \right)^d}{\left( {\log \varphi \left( N \right)} \right)^{ - \lambda  + 1}}\exp \left( { - {{\left( {\log \varphi \left( N \right)} \right)}^\lambda }} \right) \lesssim \exp \left( { - {C_4}{{\left( {\log N} \right)}^\lambda }} \right),
	\end{equation}
for some constants $ C_3,C_4 >0$ (note that $ C_3 $ also depends on $ C_2 $).
	With these in mind, we proceed to \textit{quantitatively} estimate $\mathcal{J}_1(N, \theta)$ and $\mathcal{J}_2(N, \theta)$ in \eqref{J1J2}.
	
	Following the estimate for $ {\mathcal{S}_1}\left( {{q_{{m_s}}},\theta} \right) $ in Case \ref{item:roman_I} of Section \ref{PFT1}, we obtain the following bound for $\mathcal{J}_1(N, \theta)$ for all $\theta \in \mathbb{T}^d$:
	\begin{align}
		{{\mathcal{J}_1}\left( {N,\theta } \right)}& \lesssim\sum\limits_{0 < \left\| k \right\|_{\ell^\infty} \leqslant \varphi \left( N \right)} {\frac{1}{{\widetilde{\Delta} \left( {{{\left\| k \right\|}_{\ell^\infty} }} \right)}}\sum\limits_{n \in \mathbb{Z} }  {\frac{{{{\left\| {{D^{{{K_j}}}w}} \right\|}_{{L^1}\left( {0,1} \right)}}}}{{{{\left( {2\pi N\left| {\left\langle {k,\rho } \right\rangle  - n} \right|} \right)}^{{K_j}}}}}} } 	\label{MJ2}  \\
		& \lesssim\sum\limits_{0 < \left\| k \right\|_{\ell^\infty} \leqslant \varphi \left( N \right)} {\frac{1}{{\widetilde{\Delta} \left( {{{\left\| k \right\|}_{\ell^\infty} }} \right)}}\frac{{K_j^{{K_j}\beta }{\Delta }\left( {\left\| k \right\|_{\ell^\infty}} \right)^{{K_j}}}}{{{{\left( {2\pi \gamma N} \right)}^{{K_j}}}}}} \notag \\
		& \lesssim\sum\limits_{0 < \left\| k \right\|_{\ell^\infty} \leqslant \varphi \left( N \right)} {\frac{1}{{\widetilde{\Delta} \left( {{{\left\| k \right\|}_{\ell^\infty} }} \right)}}\exp \left( { - C_{2}{{\left( {\frac{{  N}}{{\Delta  \circ \varphi \left( N \right)}}} \right)}^{\beta^{-1}}}} \right)} \notag \\
		& \lesssim\exp \left( { - C_{2} {{\left( {\frac{{  N}}{{\Delta  \circ \varphi \left( N \right)}}} \right)}^{\beta^{-1}}}} \right)\notag \\
		\label{o11}&\lesssim \exp \left( { - {C_3}{{\left( {\log N} \right)}^\lambda }} \right) ,\quad N \gg 1,
	\end{align}
	where the integers $K_j \geqslant 2$ in \eqref{MJ2} are chosen for each $j$ following the approach in \eqref{MJ}, and \eqref{o11} follows from \eqref{gujie12}.
	
	On the other hand, it is straightforward to verify that for all $\theta \in \mathbb{T}^d$, the term $\mathcal{J}_2(N, \theta)$ satisfies the estimate:
	\begin{align}
		{{\mathcal{J}_2}\left( {N,\theta } \right)} &\leqslant \sum\limits_{\left\| k \right\|_{\ell^\infty}> \varphi \left( N \right)} {\frac{1}{{\widetilde{\Delta} \left( {{{\left\| k \right\|}_{\ell^\infty} }} \right)}}\left( {\frac{1}{{{A_N}}}\sum\limits_{n = 0}^{N - 1} {w\left( {n/N} \right)} } \right)} \notag \\
		& = \sum\limits_{\left\| k \right\|_{\ell^\infty} > \varphi \left( N \right)} {\frac{1}{{ \exp \left( {{{\left( {\log {{\left\| k \right\|}_{{\ell ^\infty }}}} \right)}^\lambda }} \right) }}} \notag \\
		&\lesssim\int_{\varphi \left( N \right)}^{ + \infty } {{r^{d - 1}}\exp \left( { - {{\left( {\log r} \right)}^\lambda }} \right)dr} \notag \\
		\label{gujie22}&  \lesssim \varphi {\left( N \right)^d}{\left( {\log \varphi \left( N \right)} \right)^{ - \lambda  + 1}}\exp \left( { - {{\left( {\log \varphi \left( N \right)} \right)}^\lambda }} \right)  \\
		\label{gujie23}	&  \lesssim \exp \left( { - {C_4}{{\left( {\log N} \right)}^\lambda }} \right) ,\quad N \gg 1,
	\end{align}
	where \eqref{gujie22} follows from the asymptotic relation\footnote{This, as well as the subsequent asymptotic relation, can be established via L'H\^opital's rule; the details are therefore omitted.}
\[\int_X^{ + \infty } {{r^{d - 1}}\exp \left( { - {{\left( {\log r} \right)}^\lambda }} \right)dr}  \sim {\lambda ^{ - 1}}{X^{d - 1}}{\left( {\log X} \right)^{ - \lambda  + 1}}\exp \left( { - {{\left( {\log X} \right)}^\lambda }} \right),\quad X \gg 1,\]
	and \eqref{gujie23} is due to \eqref{gujie222}.
	
	Finally, combining \eqref{o11} and \eqref{gujie23} with \eqref{J1J2}, we conclude that as $ N \to +\infty $:
	\[\mathbf{Error}_N^{\mathrm{dis}}\left(w,f,\rho,\theta\right)   \lesssim \exp \left( { - c{{\left( {\log N} \right)}^\lambda }} \right) ,\quad\forall \theta \in \mathbb{T}^d,\]
	for some constant $ 0 < c < \min \left\{ {{C_3},{C_4}} \right\} $. This completes the proof of Case \ref{item3:roman_III}.
	\vspace{3mm}

Turning to Case \ref{item3:roman_IV}, we have	$ \widetilde \Delta \left( x \right) \sim \exp \left( {{x^\alpha }} \right) $ with $   \alpha  >0 $. For brevity, we retain the nonresonance condition and notation from Case \ref{item3:roman_III}. We quantitatively estimate $\mathcal{J}_1(N, \theta)$ and $\mathcal{J}_2(N, \theta)$ by choosing  the function $ \varphi \left( N \right) $ for $ N\gg 1 $ as
\begin{equation}\notag 
	\varphi \left( N \right):= {N^{\left( {\alpha \beta  + d } \right)^{-1}}}{{\left( {\log N} \right)}^{ - 2\left( {\alpha \beta  + d } \right)^{-1}}}.
\end{equation}	
For any $ 0 < \xi  < \alpha {\left( {\alpha \beta  + d} \right)^{ - 1}} $,  similar arguments yield
\[{\mathcal{J}_1}\left( {N,\theta } \right) \lesssim \exp \left( { - {C_3}{{\left( {\frac{N}{{\Delta  \circ \varphi \left( N \right)}}} \right)}^{{\beta ^{ - 1}}}}} \right) \ll \exp \left( { - {N^\xi }} \right),\quad N \gg 1,\]
and
\[{\mathcal{J}_2}\left( {N,\theta } \right) \lesssim \varphi {\left( N \right)^{d - \alpha }}\exp \left( { - \varphi {{\left( N \right)}^\alpha }} \right) \ll \exp \left( { - {N^\xi }} \right),\quad N \gg 1,\]
where we have used the asymptotic relation
	\[\int_X^{ + \infty } {{r^{d - 1}}\exp \left( { - {r^\alpha }} \right) {d}r}  \sim {\alpha ^{ - 1}}{X^{d - \alpha }}\exp \left( { - {X^\alpha }} \right), \quad  X \gg 1.\]
Consequently, we arrive at the desired conclusion as $ N \to +\infty $:
	\[\mathbf{Error}_N^{\mathrm{dis}}\left(w,f,\rho,\theta\right)   \ll \exp \left( { - {N^\xi }} \right) ,\quad\forall \theta \in \mathbb{T}^d.\]
As a remark, while the convergence rate could be further refined (e.g., in the spirit of \cite{TL25b}), the index $\xi$ cannot be improved using the current approach. This completes the proof of Case \ref{item3:roman_IV}.\vspace{3mm}
	
	We now complete the proofs of all cases in Theorem \ref{OPTT2}.

	\end{proof}

	 \section*{Acknowledgements} 
	The authors express their sincere gratitude to Professor Bassam Fayad for his valuable comments on an earlier version of this manuscript. In particular, his insights regarding Case \ref{item:roman_II} of Theorem \ref{OPTT1} significantly improved the quality of this work. Z. Tong was supported by the China Postdoctoral Science Foundation (Grant No. 2025M783102). Y. Li was supported in part by the National Natural Science Foundation of China (Grant Nos. 12071175, 12471183 and 12531009).

	\end{document}